\newtheorem{thm}{Theorem}[section]
\newtheorem{ass}[thm]{Assumption}
\newtheorem{lem}[thm]{Lemma}
\newtheorem{rem}[thm]{Remark}
\theoremstyle{definition}
\newtheorem{defn}{Definition}[section]
\definecolor{wco}{rgb}{0.5,0.2,0.3}
\renewcommand{\bar}{\overline}
\numberwithin{equation}{section}
\renewcommand{\hat}{\widehat}
\renewcommand{\tilde}{\widetilde}
\title{\bf Strong convergence of tamed theta scheme for superlinearly growing McKean-Vlasov NSDDEs driven by fractional Brownian motions\thanks{Supported by Jiangxi Provincial Natural Science Foundation(No., 20252BAC250002, GJJ2404201)}}
\author
{ {\bf Li Tan}$^{\tt a, b}$, {\bf Shizhong Hu}$^{\tt a, c}$, {\bf Shengrong Wang}$^{\tt d}$\thanks{Contact E-mail address: 451087433@qq.com}
	\\[0.5ex]
$^{\tt a}$School of Data Science and Statistics, Jiangxi University of\\ Finance and Economics,
Nanchang, Jiangxi, 330013, China \\
$^{\tt b}$Key Laboratory of Data Science in Finance and Economics,
Jiangxi\\ University of Finance and Economics, Nanchang, Jiangxi, 330013, China\\
$^{\tt c}$College of Modern Economics and Management, Jiangxi University of\\ Finance and Economics,
Nanchang, Jiangxi, 330013, China \\
$^{\tt d}$School of Science, East China JiaoTong University,\\ Nanchang, Jiangxi, 330013, China}
     \date{\today}
\begin{document}
\maketitle %
\begin{abstract}
In this article, we study the McKean-Vlasov neutral stochastic differential delay equations driven by fractional Brownian motion with super-linearly growing coefficients, where the Hurst exponent $H\in(1/2,1)$. The existence and uniqueness of the exact solution were shown by the Picard iteration. Besides, we propose a tamed theta Euler-Maruyama scheme for this equation, analyzed the moment boundness and propagation of chaos etc. Moreover, the convergence rate of the numerical scheme is established.

{\it Keywords }: McKean-Vlasov NSDDEs; super-linearly growing coefficients; fractional Brownian motion; strong convergence rate; tamed theta EM scheme
\end{abstract}

\section{introduction}
Stochastic differential equations (SDEs) whose coefficients depend on the law of the solution (also called McKean-Vlasov SDEs), were first investigated by McKean \cite{M66}, whose work drew inspiration from Kac's programme in Kinetic Theory \cite{K56}. Since then, McKean-Vlasov SDEs have evolved into a major area of research in stochastic analysis. To approximate the McKean-Vlasov SDE limit, Bossy and Talay \cite{BT96,BT97} proposed a system of $n$ weakly interacting particles---replicating the statistical behavior of the limit system, and systematically studied the numerical approximation of its solutions. Building on this foundation, the existence and uniqueness of McKean-Vlasov SDEs were discussed in \cite{W18,DST19}, while the well-posedness and the propagation of chaos for McKean-Vlasov SDEs were investigated by Lacker \cite{L18}. Given the rarity of explicit solutions for McKean-Vlasov SDEs, numerical approximation methods have become indispensable. A variety of schemes have been developed to handle different coefficients behaviors, for example the implicit Euler-Maruyama (EM) scheme \cite{DE20,CD22}, the adaptive EM scheme \cite{RS22,FG20}, the tamed EM scheme \cite{KNR22,GGHY24}, and truncated EM scheme \cite{CLL24}.

In recent years, research attention has increasingly shifted to McKean-Vlasov SDEs driven by fractional Brownian motion---a generalization of standard Brownian motion with unique self-similarity and long-memory properties, making it well-suited for modeling phenomena in atmospheric sciences, financial engineering, and other fields. These equations typically take the form:
\begin{align}\label{MVdrivenbyfBmmoxing}
  {\rm d}X_{t}=b(X_{t},\mathcal{L}(X_{t})){\rm d}t+\sigma(\mathcal{L}(X_{t})){\rm d}B_{t}^H,
\end{align}
 where the coefficients $b:\mathbb{R}^d\times\mathcal{P}_{\theta}(\mathbb{R}^d)\to \mathbb{R}^d, \sigma:\mathcal{P}_{\theta}(\mathbb{R}^d)\to\mathbb{R}^d\otimes\mathbb{R}^d$ are Borel-measurable, here, $ \mathcal{P}_{\theta}(\mathbb{R}^d)$ is the set of probability measures on $\mathbb{R}^d$ with finite $\theta$-th moment. $\mathcal{L}(X_{t})$ is the law of $X$ at $t$ and  $B^H=\{B_{t}^H, t\in[0,T]\}$ is a fractional Brownian motion with Hurst exponent $H\in(0,1)$ defined on the probability space $(\Omega,\mathcal{F},\mathbb{P})$. As a centered Gaussian process, $B_H$ has the covariance function
\begin{align}\label{covariance}
R_{H}(t,s)=\mathbb{E}(B_{t}^HB_{s}^H)=\frac{1}{2}(t^{2H}+s^{2H}-|t-s|^{2H}),\ \ \forall t,s\in[0,T].
\end{align}
Notably, when $H=1/2$, the fractional Brownian motion $\{B_{t}^H\}_{t\geq 0}$ reduces to a standard Brownian motion. The well-posedness of such equations was first proven by Fan et al.\cite{FHS22}. Subsequent research have further expanded the theoretical framework of this model. For example, under the assumption that the coefficients satisfy globally Lipschitz conditions, He et al. \cite{HGZ24} established the propagation of chaos for the model described in \eqref{MVdrivenbyfBmmoxing} and analyzed the strong convergence of the EM method.

A critical gap in current literature is the lack of studies on McKean-Vlasov SDEs driven by fractional Brownian motion with super-linearly growing coefficients. One notable exception is Shen et al.\cite{SXW22} who showed that under specific averaging conditions, solutions to such equations can be approximated by solutions of an associated averaged equation. More recently, Wang et al. \cite{WXT25} and Gao et al. \cite{GGL25} focused on the EM scheme for delayed McKean-Vlasov SDEs driven by fractional Brownian motion, but their work only considered super-linear growth in the delay term of the drift coefficient, not the coefficient itself.

To capture the influence of past states on current dynamics, stochastic delay differential equations (SDDEs) and neutral stochastic differential delay equations (NSDDEs) are essential tools in stochastic calculus, with broad applications in modeling natural and engineered systems, see for example \cite{JY17,BY15,LH20}. A key challenge arises when NSDDEs exhibit super-linear growth: the classical EM scheme often fails to converge. To address this, a tamed EM scheme was introduced to handle super-linearly growing coefficients. For instance, Deng et al. \cite{DFF21} proposed two types of explicit tamed EM schemes for NSDDEs with super-linearly growing drift and diffusion coefficients and proved strong convergence for each. Cui et al. \cite{CLL21}  studied the tamed EM methods to approximate the solutions of McKean-Vlasov NSDDEs with super-linear drift, establishing the convergence rate. Additional insights into tamed EM schemes for NSDDEs can be found in \cite{LH20, ZJ19}. Notably, Tan and Yuan \cite{TY18} proposed a  modified tamed theta EM scheme and demonstrated the convergence of the numerical solutions, providing a key motivation for the present work.

Drawing inspiration from the existing literatures, we extend the model \eqref{MVdrivenbyfBmmoxing} to accommodate coefficients with super-linear growth. We propose a tamed theta EM scheme for the McKean-Vlasov NSDDEs, which takes the following form:
\begin{align}\label{benwenmoxing}
  {\rm d}(X_{t}-D(X_{t-\tau}))=b(X_{t},X_{t-\tau},\mathcal{L}(X_{t})){\rm d}t+\sigma(\mathcal{L}(X_{t})){\rm d}B_{t}^H,
\end{align}
where $\tau>0$ denotes the delay.

This paper's main contributions are outlined below:
\begin{itemize}
\item Unlike the prior work (e.g., \cite{WXT25, GGL25}) that only considers super-linear growth in the delay term, we analyze McKean-Vlasov equations driven by fractional Brownian motion where the drift coefficient itself exhibits super-linear growth. We prove the core properties of such equations, representing a significant advance in this field.

\item  By a delicate analysis involving the fractional It\^{o} formula and intricate inequality techniques, we investigate the properties of the solution, the propagation of chaos in associated particle systems, and the convergence of the proposed numerical scheme.
\end{itemize}

The rest of the paper is organized as follows. Section 2 presents the mathematical preliminaries, including key definitions and lemmas for McKean-Vlasov NSDDEs with super-linear growth driven by fractional Brownian motion. Section 3 establishes the existence and uniqueness of the solution. In Section 4, we analyze the propagation of chaos in the particle system associated with our model. Finally, Section 5 introduces the tamed theta EM scheme and derives its strong convergence rate.

\section{Preliminaries}
Throughout this paper, let $(\Omega,\mathcal{F},\mathbb{P})$ be a complete probability space with a filtration $\{\mathcal{F}_t\}_{t\geq 0}$ satisfying the usual conditions (i.e., it is increasing and right continuous while $\mathcal{F}_{0}$ contains all $\mathbb{P}$-null sets). We use $\lvert \cdot \rvert$ and $\langle \cdot,\cdot\rangle$ for the Euclidean norm and inner product, respectively, and let $a\wedge b:=\min(a,b)$ and $a\vee b:=\max(a,b)$. Let $\tau>0$ be a constant and denote by $\mathcal{C}([-\tau,0];\mathbb{R}^d)$ the space of all continuous functions $\varphi$ from $[-\tau,0]$ to $\mathbb{R}^d$ with the norm $\parallel \varphi\parallel=\sup_{-\tau\leq t\leq 0}|\varphi(t)|$. For $p>0$, $L_{\mathcal{F}_0}^p(\Omega;\mathbb{R}^d)$ represents the family of $\mathcal{F}_0$-measurable random variable with $\mathbb{E}|\cdot|^p<\infty$. Let $\mathcal{P}_{\theta}(\mathbb{R}^d), \theta\ge 1$ be the space of probability measures on $\mathbb{R}^d$ with finite $\theta$-th moment. Formally, it is defined as
\begin{align*}
  \mathcal{P}_{\theta}(\mathbb{R}^d)=\bigg\{\mu\in\mathcal{P}(\mathbb{R}^d):\int_{\mathbb{R}^d}|x|^{\theta}\mu({\rm d}x)<\infty\bigg\},
\end{align*}
where $\mathcal{P}(\mathbb{R}^d)$ denotes the family of all probability measures on $(\mathbb{R}^d,\mathcal{B}(\mathbb{R}^d))$, here, $\mathcal{B}(\mathbb{R}^d)$ is the Borel $\sigma$-field over $\mathbb{R}^d$. For any $\mu\in\mathcal{P}_{\theta}(\mathbb{R}^d)$, define the $\theta$-th moment norm of $\mu$ as
\begin{align*}
  \mathcal{W}_{\theta}(\mu)=\bigg(\int_{\mathbb{R}^d}|x|^{\theta}\mu({\rm d}x)\bigg)^{1/\theta}.
\end{align*}
Next, for two probability measures $\mu,\nu\in\mathcal{P}_{\theta}(\mathbb{R}^d)$, the Wasserstein distance between $\mu$ and $\nu$ is defined as
\begin{align*}
\mathbb{W}_{\theta}(\mu,\nu)=\bigg(\inf_{\pi\in\Pi(\mu,\nu)} \int_{\mathbb{R}^d\times\mathbb{R}^d}|x-y|^{\theta}\pi({\rm d}x,{\rm d}y)\bigg)^{1/\theta},
\end{align*}
where $\Pi(\mu,\nu)$---the set of coupling measures between $\mu$ and $\nu$, consists of all probability measures on $\mathbb{R}^d\times\mathbb{R}^d$ whose marginal measures are $\mu$ and $\nu$, respectively. Precisely, a measure $\pi\in\Pi(\mu,\nu)$ satisfies $\pi(A\times\mathbb{R}^d)=\mu(A)$ and $\pi(\mathbb{R}^d\times A)=\nu(A)$ for every Borel set $A\subseteq\mathbb{R}^d$. The space $\mathcal{P}_2(\mathbb{R}^d)$ is a Polish space when equipped with the $L^{2}$-Wasserstein distance \cite[Definition 6.1]{V09}.

Let $\{B_{t}^H\}_{t\in[0,T]}$ be a $d$-dimensional fractional Brownian motion with Hurst exponent $H\in(0,1)$. For Hurst exponents $1/2<H<1$, we define the kernel function
\begin{align}\label{phideshizi}
  \phi(s,t)=\phi_{H}(s,t)=H(2H-1)|s-t|^{2H-2},\ \ s,t\in\mathbb{R},
\end{align}
and recall that for $s,t>0$, this kernel satisfies the integral identity
\begin{align*}
  \int_{0}^t\int_{0}^s\phi(u,v){\rm d}u{\rm d}v=R_{H}(t,s),
\end{align*}
where $R_{H}(t,s)$ is defined as in \eqref{covariance}. Let $\mathcal{S}(\mathbb{R})$ be the Schwartz space (space of rapidly decreasing smooth functions) on $R$. For any $f\in \mathcal{S}(\mathbb{R})$, we define its squared norm with respect to the kernel $\phi$ as
\begin{align*}
  \| f\|_{H}^2:=\int_{\mathbb{R}}\int_{\mathbb{R}}f(s)f(t)\phi(s,t){\rm d}s{\rm d}t<\infty,
\end{align*}
where $\phi$ is the kernel defined in \eqref{phideshizi}. Next, we equip $\mathcal{S}(\mathbb{R})$ with an inner product induced by $\phi$ as
\begin{align*}
  \langle f,g\rangle_{H}:=\int_{\mathbb{R}}\int_{\mathbb{R}}f(s)g(t)\phi(s,t){\rm d}s{\rm d}t,\ \ f,g\in\mathcal{S}(\mathbb{R}).
\end{align*}
The completion of $\mathcal{S}(\mathbb{R})$ with respect to this inner product is denoted by $L_{\phi}^2(\mathbb{R})$. For the half line $\mathbb{R}_{+}:=[0,+\infty)$, we further define the space $L_{\phi}^2(\mathbb{R}_{+})$ as the set of measurable functions $g: \mathbb{R}_{+}\rightarrow\mathbb{R}$ such that their squared $\phi$-norm is finite:
\begin{align*}
 L_{\phi}^2(\mathbb{R}_{+})=\bigg\{g|g:\mathbb{R}_{+}\to\mathbb{R},\ \ \ |g|_{\phi}^2:=\int_{0}^{\infty}\int_{0}^{\infty}g(s)g(t)\phi(s,t){\rm d}s{\rm d}t<\infty\bigg\}.
\end{align*}
Here, $L_{\phi}^2(\mathbb{R}_{+})$ can be understood as the natural restriction of $L_{\phi}^2(\mathbb{R})$ to functions supported on $\mathbb{R}_{+}$, and it is also a Hilbert space under the inner product
\begin{align*}
\langle g_1, g_2\rangle_{\phi}:=\int_{0}^{\infty}\int_{0}^{\infty}g_1(s)g_2(t)\phi(s,t){\rm d}s{\rm d}t.
\end{align*}
\begin{defn}\cite[Definition 3.5.1]{BH008}\label{MalliavinDerivative}
 {\rm Let $g\in L_{\phi}^2(\mathbb{R})$, and $\Phi_{g}$ denote a random variable induced by $g$, i.e., $\Phi_{g}=\int_{\mathbb{R}}g(s){\rm d}B_s^H$. For $F\in L^p$, the $\phi$-derivative of $F$ in the direction of $\Phi_{g}$ is defined as}
 \begin{align*}
   D_{\Phi_{g}}F(\omega)=\lim_{\delta\to 0}\frac{1}{\delta}\bigg\{F\bigg(\omega+\delta\int_{0}^{\cdot}g(u){\rm d}u\bigg)-F(\omega)\bigg\},
 \end{align*}
{\rm  provided the limit exists in $L^p$. Furthermore, if there is a process $(D_{s}^{\phi}F,s\geq 0)$ such that for all $g \in L_{\phi}^2(\mathbb{R})$}
\begin{align*}
 D_{\Phi_{g}}F=\int_{0}^{\infty}D_{s}^{\phi}F\cdot g_{s}{\rm d}s\ \ \ {\rm  almost\  surely},
\end{align*}
then $F$ is said to be $\phi$-differentiable.
\end{defn}
We denote $L_{\phi}(0, T)$ as the set of stochastic processes $F$ on $[0, T]$ that satisfy the following properties: $F \in L_{\phi}(0, T)$ if and only if $\mathbb{E}\|F\|_{H}^{2}< \infty$, $F$ is $\phi$-differentiable, the trace $D_{s}^{\phi}F_{t}$ exists for all $0 \leq s, t \leq T$, $\mathbb{E}\int_{0}^{T}\int_{0}^{T}|D_{s}^{\phi}F_{t}|^{2}\,ds\,dt < \infty$, and for each sequence of partitions $(\pi_{n}, n \in \mathbb{N})$ such that $\|\pi_{n}\| \to 0$ as $n \to \infty$, it holds that
\begin{align*}
\sum_{i,j=0}^{n-1} \mathbb{E}\left[ \int_{t_i^{(n)}}^{t_{i+1}^{(n)}} \int_{t_j^{(n)}}^{t_{j+1}^{(n)}} | D_s^\phi F_{t_i^{(n)}}^\pi D_t^\phi F_{t_j^{(n)}}^\pi - D_s^\phi F_t D_t^\phi F_s | ds dt \right]\to 0,
\end{align*}
and
\begin{align*}
 \mathbb{E}\| F^\pi - F \|_H^2 \to 0, n\to \infty,
\end{align*}
where $\pi_n: 0 = t_0^{(n)} < t_1^{(n)} < \cdots < t_{n-1}^{(n)} < t_n^{(n)} = T.$

Consider the $d$-dimensional SDEs driven by a $d$-dimensional fractional Brownian motion $B_{t}^H$ with Hurst exponent $H\in(1/2, 1)$ as follows
\begin{align}\label{fbmitofunction}
  {\rm d}X_{t}=G(t,X_{t}){\rm d}t+F(t,X_{t}){\rm d}B_{t}^H,
\end{align}
where $G:[0,T]\times\mathbb{R}^d\rightarrow\mathbb{R}^d$ and $F:[0,T]\times\mathbb{R}^d\rightarrow\mathbb{R}^{d\times d}$. Let $C^{2,1}([0,T]\times\mathbb{R}^d;\mathbb{R})$ denote the space of real-valued functions $V(t,x)$ that are twice continuously differentiable in $x\in\mathbb{R}^d$ and once continuously differentiable in $t\in[0,T]$. Based on the fractional It\^{o} formula derived in \cite{BH008,HGZ23}, the differential form of $V(t,X_{t})$ takes the form:
\begin{align*}
 {\rm d}V(t,X_{t})=&\frac{\partial V(t,X_{t})}{\partial t}{\rm d}t+ \frac{\partial V(t,X_{t})}{\partial X}G(t,X_{t}){\rm d}t+\frac{\partial V(t,X_{t})}{\partial X}F(t,X_{t}){\rm d}B_{t}^H\\
 &+\frac{\partial^2 V(t,X_{t})}{\partial X^2}F(t,X_{t})D_{t}^{\phi}X_{t}{\rm d}t,
\end{align*}
where $\frac{\partial V}{\partial X}$ denotes the gradient, $\frac{\partial^2 V}{\partial X^2}$ is the Hessian matrix. Formally, this can also be expressed by the kernel $\phi$ as
\begin{align*}
 {\rm d}V(t,X_{t})=&\frac{\partial V(t,X_{t})}{\partial t}{\rm d}t+ \frac{\partial V(t,X_{t})}{\partial X}G({t,X_{t}}){\rm d}t+\frac{\partial V(t,X_{t})}{\partial X}F(t,X_{t}){\rm d}B_{t}^H\\
 &+\frac{\partial^2 V(t,X_{t})}{\partial X^2}F(t,X_{t})\int_{0}^t\phi(t,r)F(r,X_{r}){\rm d}r{\rm d}t.
\end{align*}
Additional discussions on the fractional It\^{o} formula can be found in \cite{LRQ10, N03}. We summarize the result below.

\begin{lem}(Fractional It\^{o} formula)\label{Itoformula}
 {\rm Let $X_{t}$ be the solution of \eqref{fbmitofunction} with $1/2<H<1$, and assume that $\mathbb{E}\bigg[\int_{0}^T\left\lvert F(s,X_{s})D_{s}^{\phi}\int_{0}^sF(u,X_{u}){\rm d}B_{u}^H\right\rvert{\rm d}s\bigg]<\infty$, $\frac{\partial V(s,X_s)}{\partial X}F(s,X_s)\in L_{\phi}(0,T)$ for $V\in C^{2,1}([0,T]\times\mathbb{R}^d;\mathbb{R})$. Then we have }
\begin{align*}
&V(t,X_t)\\
=&V(0, X_0)+\int_0^t\bigg[\frac{\partial V(s,X_s)}{\partial s}+\frac{\partial V(s,X_s)}{\partial X}G(s,X_s)\\
&+H(2H-1)\text{{\rm tr}}\left(\frac{\partial^2 V(s,X_s)}{\partial X^2}\int_0^s(s-r)^{2H-2}F(r,X_r)F^T(r,X_r){\rm d}r\right)\bigg]{\rm d}s\\
&+\int_0^t\frac{\partial V(s,X_s)}{\partial X}F(s,X_s){\rm d}B_s^H,
\end{align*}
{\rm where $F^T$ is the transpose of $F$, and $\text{{\rm tr}}(\cdot)$ denotes the matrix trace. }
\end{lem}

Since the kernel function $\phi$ is only valid for $1/2<H<1$, we restrict our analysis to Hurst exponents satisfying $1/2<H<1$ throughout this paper. In this work, we focus on the following $d$-dimensional McKean-Vlasov NSDDEs driven by fractional Brownian motion as follows:
\begin{align}\label{MVdrivenbyfBm}
\left\{
\begin{array}{ll}
  {\rm d}(X_{t}-D(X_{t-\tau}))=b(X_{t},X_{t-\tau},\mathcal{L}_{X_{t}}){\rm d}t+\sigma(\mathcal{L}_{X_{t}}){\rm d}B_{t}^H,\ \ t>0, \\
  X_{t}=\xi=\{\xi(t):-\tau\leq t\leq 0\}\in L_{\mathcal{F}_0}^{{p}}([-\tau,0];\mathbb{R}^d),\ {p}\geq1
\end{array}
\right.
\end{align}
where $b:\mathbb{R}^d\times\mathbb{R}^d\times\mathcal{P}_2(\mathbb{R}^d)\to \mathbb{R}^d$, $\sigma:\mathcal{P}_{2}(\mathbb{R}^d)\to \mathbb{R}^{d\times d}$, $D:\mathbb{R}^d\to\mathbb{R}^d$, $\mathcal{L}_{X_{t}}$ denotes the law of $X_{t}$. Throughout the paper, we shall denote by $C$ a generic positive constant that may depend on $T, H, \xi, l, \lambda, K_i$ etc., but independent of $n, m, s, t, N, \Delta$. The value may change from line to line.

\begin{ass}\label{chushizhixidetiaojian}
{\rm For any $s,t\in[-\tau,0]$ and $p>0$, there is a positive constant $K_0$ such that }
\begin{align*}
  \mathbb{E}\bigg(\sup_{-\tau\leq s,t\leq 0}|\xi(s)-\xi(t)|^p\bigg)\leq K_{0}|s-t|^p.
\end{align*}
\end{ass}

\begin{ass}\label{zhonglixiangdetiaojian}
  {\rm $D(0)=0$, and there exists a constant $0<\lambda<1$ such that
  \begin{align*}
    |D(x)-D(\bar{x})|\leq \lambda|x-\bar{x}|,
  \end{align*}
for $x, \bar{x}\in\mathbb{R}^d$.}
\end{ass}

\begin{ass}\label{danbiantiaojian}
  {\rm There exist positive constants $l\geq 1$, $ K_2$ and $K_3$ such that}
  \begin{align*}
    \big\langle x-D(y)-\bar{x}+D(\bar{y}),b(x,y,\mu)-b(\bar{x},\bar{y},\nu)\big\rangle\leq K_{2}(|x-\bar{x}|^2+|y-\bar{y}|^2+\mathbb{W}_{2}^2(\mu,\nu)),
  \end{align*}
  {\rm and}
  \begin{align*}
    |b(x,y,\mu)-b(\bar{x},\bar{y},\nu)|\leq K_{3}[(1+|x|^l+|\bar{x}|^l+|y|^l+|\bar{y}|^l)(|x-\bar{x}|+|y-\bar{y}|)+\mathbb{W}_{2}(\mu,\nu)],
  \end{align*}
  {\rm for $x,y,\bar{x},\bar{y}\in\mathbb{R}^d$, $\mu,\nu\in\mathcal{P}_{2}(\mathbb{R}^d)$.}
\end{ass}

\begin{ass}\label{sigmamanzudechushitiaojian}
  {\rm There exist a positive constant $K_4$ such that}
  \begin{align*}
    \parallel \sigma(\mu)-\sigma(\nu)\parallel\leq K_{4}\mathbb{W}_{2}(\mu,\nu).
  \end{align*}
  {\rm And for the initial experience distribution $\delta_0$, there is a positive constant $K_5$ such that}
  \begin{align*}
  |b(0,0,\delta_0)|\vee\parallel\sigma(\delta_0)\parallel\leq K_{5}.
  \end{align*}
\end{ass}

\begin{rem}\label{remarkone}
 {\rm  Let Assumptions \ref{danbiantiaojian}-\ref{sigmamanzudechushitiaojian} hold, then, for any $x,y\in\mathbb{R}^d$, we have}
  \begin{align*}
  |b(x,y,\mu)|\leq &|b(x,y,\mu)-b(0,0,\delta_0)|+|b(0,0,\delta_0)|\\
  \leq &K_{3}[(1+|x|^l+|y|^l)(|x|+|y|)+\mathbb{W}_{2}(\mu,\delta_{0})]+K_{5}\\
  \leq &C(1+|x|^{l+1}+|y|^{l+1}+\mathbb{W}_{2}(\mu,\delta_{0})),
  \end{align*}
  {\rm where $C=2(K_{3}+K_{5})$. Moreover, }
    \begin{align*}
  \|\sigma(\mu)\|\leq &\|\sigma(\mu)-\sigma(\delta_0)\|+\|\sigma(\delta_0)\|  \leq K_{4}\mathbb{W}_{2}(\mu,\delta_{0})+K_{5}\le C(1+\mathbb{W}_{2}(\mu,\delta_{0})),
  \end{align*}
  {\rm where $C=K_{4}\vee K_{5}$. Further, let Assumptions \ref{zhonglixiangdetiaojian}-\ref{sigmamanzudechushitiaojian} hold, then by the Young's inequality, we get }
  \begin{align*}
  \langle x-D(y),b(x,y,\mu)\rangle=&\langle x-D(y)-0+D(0),b(x,y,\mu)-b(0,0,\delta_{0})+b(0,0,\delta_{0})\rangle\\
  \leq &K_{2}(|x|^2+|y|^2+\mathbb{W}_{2}^2(\mu,\delta_{0}))+(|x|+|D(y)|)|b(0,0,\delta_{0})|\\
  \leq &C(1+|x|^2+|y|^2+\mathbb{W}_{2}^2(\mu,\delta_{0})),
  \end{align*}
  {\rm where $C=(K_{2}+1/2)\vee K_{5}^2$.}
\end{rem}
For later use, we state the following lemmas without proof.

\begin{lem}\cite[Lemma 4.1]{M07}\label{Maodebudengshi}
 {\rm Let $p>1$, $\varepsilon>0$ and $x,y\in\mathbb{R}$. Then}
 \begin{align*}
   |x+y|^p\leq \bigg(1+\varepsilon^{\frac{1}{p-1}}\bigg)^{p-1}\bigg(|x|^p+\frac{|y|^p}{\varepsilon}\bigg).
 \end{align*}
 {\rm Especially for $\varepsilon=[(1-a)/a]^{p-1}$ where $0<a<1$, we get}
 \begin{align*}
  |x+y|^p\leq \frac{1}{a^{p-1}}|x|^p+\frac{1}{(1-a)^{p-1}}|y|^p.
 \end{align*}
\end{lem}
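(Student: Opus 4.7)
The inequality is a weighted version of the elementary bound $(|x|+|y|)^p\lesssim |x|^p+|y|^p$, and the cleanest route is via convexity of $t\mapsto t^p$ on $[0,\infty)$ (equivalently, H\"older's inequality). First I would use $|x+y|\leq|x|+|y|$ to reduce the claim to bounding $(|x|+|y|)^p$. Then, for any weights $\alpha,\beta>0$ with $\alpha+\beta=1$, the identity
\begin{equation*}
|x|+|y|=\alpha\cdot(|x|/\alpha)+\beta\cdot(|y|/\beta)
\end{equation*}
together with Jensen's inequality applied to the convex function $t\mapsto t^p$ yields the master bound
\begin{equation*}
(|x|+|y|)^p\leq \alpha(|x|/\alpha)^p+\beta(|y|/\beta)^p=\alpha^{1-p}|x|^p+\beta^{1-p}|y|^p.
\end{equation*}

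To extract the two stated forms, I would simply tune $\alpha,\beta$. For the first inequality, I would set $\alpha=1/(1+\varepsilon^{1/(p-1)})$ and $\beta=\varepsilon^{1/(p-1)}/(1+\varepsilon^{1/(p-1)})$; a short computation gives $\alpha^{1-p}=(1+\varepsilon^{1/(p-1)})^{p-1}$ and $\beta^{1-p}=(1+\varepsilon^{1/(p-1)})^{p-1}\varepsilon^{-1}$, which factors cleanly into the advertised expression $(1+\varepsilon^{1/(p-1)})^{p-1}(|x|^p+|y|^p/\varepsilon)$. For the specialized case with $\varepsilon=((1-a)/a)^{p-1}$ and $p\geq2$, the natural choice is $\alpha=a$, $\beta=1-a$, which directly produces the bound $|x+y|^p\leq a^{-(p-1)}|x|^p+(1-a)^{-(p-1)}|y|^p$; alternatively one can check that plugging the special $\varepsilon$ into the first inequality collapses to the same thing, since $1+\varepsilon^{1/(p-1)}=1/a$.

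No genuine obstacle is anticipated; the argument is essentially one application of Jensen after the triangle inequality, and the only thing to verify is the algebra that converts the free weights $\alpha,\beta$ into the constants involving $\varepsilon$ or $a$. An entirely parallel derivation goes through H\"older's inequality with conjugate exponents $p$ and $q=p/(p-1)$, where the identity $q/p=1/(p-1)$ reproduces the same exponent structure; I mention it only because it may be the form used in the cited reference \cite{M07}, but the convexity route above is the shortest self-contained path.
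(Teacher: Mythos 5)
Your proof is correct: the triangle inequality followed by Jensen's inequality for the convex map $t\mapsto t^p$ with weights $\alpha=1/(1+\varepsilon^{1/(p-1)})$, $\beta=\varepsilon^{1/(p-1)}/(1+\varepsilon^{1/(p-1)})$ gives exactly the stated constant, and the specialization $\varepsilon=((1-a)/a)^{p-1}$ (equivalently $\alpha=a$, $\beta=1-a$) yields the second form; note that for this second form convexity already works for all $p>1$, so the hypothesis $p\ge 2$ is not actually needed. The paper itself gives no proof --- it imports the lemma verbatim from Mao's book \cite{M07} --- and your convexity/H\"older argument is the standard derivation of that cited result, so there is nothing to reconcile.
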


\begin{lem}\cite[Lemma 7.1.2]{H06}\label{Hlemma}
{\rm Let $\beta>0$, $\gamma>0$ with $\beta+\gamma>1$. Suppose $a\geq 0$, $b\geq 0$, $u$ is nonnegative and $t^{\gamma-1}u(t)$ is locally integrable on $0\leq t
\le T$. If }
\begin{align*}
u(t)\leq a+b\int_{0}^t(t-s)^{\beta-1}s^{\gamma-1}u(s){\rm d}s, \ a.e.
\end{align*}
{\rm   then}
\begin{align*}
u(t)\leq aE_{\beta,\gamma}((b\Gamma(\beta))^{1/v}t),
\end{align*}
{\rm where $v=\beta+\gamma-1>0$, and $E_{\beta,\gamma}(s)$ denotes the generalized Mittag-Leffler function defined by $E_{\beta,\gamma}(s)=\sum_{m=0}^{\infty}c_{m}s^{mv}$ with $c_{0}=1$, $\frac{c_{m+1}}{c_{m}}=\frac{\Gamma(mv+\gamma)}{\Gamma(mv+\gamma+\beta)}$ for all $m\geq 0$, here $\Gamma(\cdot)$ is the Gamma function.}
\end{lem}

\begin{rem}
{\rm Considering the special situation of Lemma \ref{Hlemma} with $\beta=2H(1/2<H<1)$, $\gamma=1$, $b=1$, it follows immediately that $u(t)$ is bounded, i.e., there exists a positive constant $C$ such that}
\begin{align*}
u(t)\leq C.
\end{align*}
{\rm Since for these parameters, we have $\beta+\gamma=2H+1>1$, so $v=2H>0$. The Mittag-Leffler function in Lemma \ref{Hlemma} reduces to $E_{2H,1}(s)=\sum_{m=0}^{\infty}c_{m}s^{2Hm}$ where $s=[\Gamma(2H)]^{1/2H}t$. This series converges for all $t\in\mathbb{R}$, as shown by analyzing the asymptotic behavior of its coefficients. Using the Stirling's approximation for the Gamma function, we derive}
\begin{align*}
\frac{c_{m+1}}{c_{m}}=\frac{\Gamma(2Hm+1)}{\Gamma(2Hm+1+2H)}\sim\frac{(2H m)^{2H m + 1/2} e^{-2H m}}{(2H m + 2H)^{2H m + 2H + 1/2} e^{-(2H m + 2H)}}.
\end{align*}
{\rm Simplifying further, this ratio satisfies}
\begin{align*}
\frac{c_{m+1}}{c_{m}}\sim (2H m)^{-2H}e^{2H}.
\end{align*}
{\rm Since for $2H>0$ we have }
\begin{align*}
\lim_{m\to\infty}(2H m)^{-2H}e^{2H}=0,
\end{align*}
{\rm the ratio test guarantees that the series $\sum_{m=0}^{\infty}c_{m}s^{2Hm}$ has an infinite radius of convergence. Thus, $E_{2H,1}(\cdot)$ is bounded for all $t\in\mathbb{R}$, implying $u(t)\leq a\cdot E_{2H,1}(\cdot)\leq C.$}
\end{rem}

\section{Existence and Uniqueness}
\begin{thm}\label{Xtnyoujie}
{\rm Let Assumptions \ref{chushizhixidetiaojian}-\ref{sigmamanzudechushitiaojian} hold. Then there exists a unique solution $\{X_t\}_{t\in[-\tau,T]}$ to \eqref{MVdrivenbyfBm}. Moreover, for any $\bar{p}\ge 2$, we have}
  \begin{align}\label{pmoment}
   \mathbb{E}\bigg(\sup_{0\leq t\leq T}|X_{t}|^{\bar{p}}\bigg)\leq C,
  \end{align}
  {\rm where $C$ is a positive constant depending on $T, H, \bar{p}, \lambda, K_i$ and the initial data $\xi$.}
\end{thm}

\begin{proof}
We note that the application of the fractional It\^{o} formula in the following argument is justified by the Lipschitz conditions in Assumptions \ref{zhonglixiangdetiaojian}-\ref{sigmamanzudechushitiaojian}, which ensure the necessary regularity of the Picard iteration sequence $X_t^{(k)}$. Standard theory of Malliavin calculus for SDEs (e.g. \cite{BH008}) guarantees that these iterates are $\phi$-differentiable and satisfy the integrability conditions required by Lemma \ref{Itoformula}. We refer to the proof procedure in \cite{CLL21} and divide the proof into 3 steps.\\
{\bf Step 1. Moment Boundness} For $k\ge 0$ and $t\in[-\tau,0]$, define $X^{(k)}_{t}=\xi(t)$. For $k=0$ and $t\in[0,T]$, define $X^{(0)}_{t}\equiv\xi(0)$. For $k\ge 1$ and $t\in[0,T]$, $X_{t}^{(k)}$ is solved by
\begin{align*}
{\rm d}(X_{t}^{(k)}-D(X^{(k)}_{t-\tau}))=b(X_{t}^{(k)},X_{t-\tau}^{(k)},\mathcal{L}_{X_{t}^{(k-1)}}){\rm d}t+\sigma(\mathcal{L}_{X_{t}^{(k-1)}}){\rm d}B_{t}^H,
\end{align*}
where $\mathcal{L}_{X_{t}^{(0)}}=\mathcal{L}_{\xi_{0}}$. For $i=1$, by Assumption \ref{zhonglixiangdetiaojian} and Lemma \ref{Maodebudengshi}, we get
\begin{align*}
|X_{t}^{(1)}|^{\bar{p}}=&|D(X_{t-\tau}^{(1)})+X_{t}^{(1)}-D(X_{t-\tau}^{(1)})|^{\bar{p}}\\
\leq &\frac{1}{\lambda^{{\bar{p}}-1}}|D(X_{t-\tau}^{(1)})|^{\bar{p}}+\frac{1}{(1-\lambda)^{{\bar{p}}-1}}|X_{t}^{(1)}-D(X_{t-\tau}^{(1)})|^{\bar{p}}\\
\leq &\lambda|X_{t-\tau}^{(1)}|^{\bar{p}}+\frac{1}{(1-\lambda)^{{\bar{p}}-1}}|X_{t}^{(1)}-D(X_{t-\tau}^{(1)})|^{\bar{p}}.
\end{align*}
Then, we conclude that
\begin{align}\label{Xtdebudengshi}
\sup_{0\leq s\leq t}|X_{s}^{(1)}|^{\bar{p}}\leq\frac{\lambda}{1-\lambda}\|\xi\|^{\bar{p}}+ \frac{1}{(1-\lambda)^{\bar{p}}}\sup_{0\leq s\leq t}|X_{s}^{(1)}-D(X_{s-\tau}^{(1)})|^{\bar{p}}.
\end{align}
Using the fractional It\^{o} formula in Lemma \ref{Itoformula}, it follows that
\begin{align*}
&|X_{t}^{(1)}-D(X_{t-\tau}^{(1)})|^{\bar{p}}\\
\leq &|\xi(0)-D(\xi(-\tau))|^{\bar{p}}\\
&+{\bar{p}}\int_{0}^t|X_{s}^{(1)}-D(X_{s-\tau}^{(1)})|^{{\bar{p}}-2}\langle X_{s}^{(1)}-D(X_{s-\tau}^{(1)}), b(X_{s}^{(1)},X_{s-\tau}^{(1)},\mathcal{L}_{X_{s}^{(0)}})\rangle{\rm d}s\\
&+\int_{0}^t {\bar{p}}({\bar{p}}-1)H(2H-1)|X_{s}^{(1)}-D(X_{s-\tau}^{(1)})|^{{\bar{p}}-2}\int_{0}^s(s-r)^{2H-2}\|\sigma(\mathcal{L}_{X_{r}^{(0)}})\|^2{\rm d}r{\rm d}s\\
&+{\bar{p}}\int_{0}^t|X_{s}^{(1)}-D(X_{s-\tau}^{(1)})|^{{\bar{p}}-1}\|\sigma(\mathcal{L}_{X_{s}^{(0)}})\|{\rm d}B_{s}^H.\\
=&|\xi(0)-D(\xi(-\tau))|^{\bar{p}}+Q_{1}^{(1)}(t)+Q_{2}^{(1)}(t)+Q_{3}^{(1)}(t).
\end{align*}
For any positive integer $N$, define the stopping time
\begin{align*}
\rho _{N}^{(1)}=T\wedge\inf\{t\in[0,T]:|X_{t}^{(1)}|\ge N\}.
\end{align*}
It yields that $\rho _{N}^{(1)}\to T$ a.s. as $N\to \infty$. Applying Assumptions \ref{zhonglixiangdetiaojian} and \ref{danbiantiaojian}, the Young's inequality, we see
\begin{align*}
  &\mathbb{E}\bigg(\sup_{0\leq s\leq t\wedge\rho_{N}^{(1)}}Q_{1}^{(1)}(s)\bigg)\\
  =&{\bar{p}}\mathbb{E}\int_{0}^{t\wedge\rho_{N}^{(1)}}|X_{s}^{(1)}-D(X_{s-\tau}^{(1)})|^{{\bar{p}}-2}\langle X_{s}^{(1)}-D(X_{s-\tau}^{(1)}), b(X_{s}^{(1)},X_{s-\tau}^{(1)},\mathcal{L}_{X_{s}^{(0)}})\rangle{\rm d}s\\
  \leq &C\mathbb{E}\int_{0}^{t\wedge\rho_{N}^{(1)}}|X_{s}^{(1)}-D(X_{s-\tau}^{(1)})|^{\bar{p}}{\rm d}s
  +C\mathbb{E}\int_{0}^{t\wedge\rho_{N}^{(1)}}\bigg[1+|X_{s}^{(1)}|^2+|X_{s-\tau}^{(1)}|^2+\mathbb{W}_2^2(\mathcal{L}_{X_{s}^{(0)}},\delta_{0})\bigg]^{{\bar{p}}/2}{\rm d}s\\
  \leq &C+C\int_{0}^{t}\mathbb{E}\bigg(\sup_{0\leq s\leq r\wedge\rho_{N}^{(1)}}|X_{s}^{(1)}|^{\bar{p}}\bigg){\rm d}r.
\end{align*}
Since Assumption \ref{sigmamanzudechushitiaojian} implies $\| \sigma(\mathcal{L}_{X_r^{(0)}}) \| \le C \big( 1 + \mathbb{W}_2(\mathcal{L}_{X_r^{(0)}}, \delta_0) \big) \le C$, this together with the Young's inequality, the Jensen inequality, and the Fubini theorem, it results in
\begin{align*}
&\mathbb{E}\bigg(\sup_{0\leq s\leq t\wedge\rho_{N}^{(1)}}Q_{2}^{(1)}(s)\bigg)\notag\\
=&\mathbb{E}\int_{0}^{t\wedge\rho_{N}^{(1)}} {\bar{p}}({\bar{p}}-1)H(2H-1)|X_{s}^{(1)}-D(X_{s-\tau}^{(1)})|^{{\bar{p}}-2}\int_{0}^s(s-r)^{2H-2}\|\sigma(\mathcal{L}_{X_{r}^{(0)}})\|^2{\rm d}r{\rm d}s\\ \notag
\leq &C\mathbb{E}\int_{0}^{t\wedge\rho_{N}^{(1)}}\int_{0}^s(s-r)^{2H-2}|X_{s}^{(1)}-D(X_{s-\tau}^{(1)})|^{{\bar{p}}-2}(1+\mathbb{W}_2(\mathcal{L}_{X_{r}^{(0)}},\delta_{0}))^2{\rm d}r{\rm d}s\\ \notag
\leq&C\mathbb{E}\int_{0}^{t\wedge\rho_{N}^{(1)}}|X_{s}^{(1)}-D(X_{s-\tau}^{(1)})|^{\bar{p}}\bigg(\int_{0}^s(s-r)^{2H-2}{\rm d}r\bigg){\rm d}s+C\mathbb{E}\int_{0}^{t\wedge\rho_{N}^{(1)}}\int_{0}^s(s-r)^{2H-2}{\rm d}r{\rm d}s\\ \notag
=&C\mathbb{E}\int_{0}^{t\wedge\rho_{N}^{(1)}}|X_{s}^{(1)}-D(X_{s-\tau}^{(1)})|^{\bar{p}}\bigg(\int_{0}^s(s-r)^{2H-2}{\rm d}r\bigg){\rm d}s+C\mathbb{E}\int_{0}^{t\wedge\rho_{N}^{(1)}}\int_{r}^t(s-r)^{2H-2}{\rm d}s{\rm d}r\\ \notag
= &C\mathbb{E}\int_{0}^{t\wedge\rho_{N}^{(1)}}\frac{s^{2H-1}}{2H-1}|X_{s}^{(1)}-D(X_{s-\tau}^{(1)})|^{\bar{p}}{\rm d}s+C\mathbb{E}\int_{0}^{t\wedge\rho_{N}^{(1)}}\frac{(t-r)^{2H-1}}{2H-1}{\rm d}r.\notag
\end{align*}
Combining with Assumption \ref{zhonglixiangdetiaojian} and the fact that $1/2< H< 1$, we obtain
\begin{align*}
\mathbb{E}\bigg(\sup_{0\leq s\leq t\wedge\rho_{N}^{(1)}}Q_{2}^{(1)}(s)\bigg)
\leq &C\int_{0}^t\mathbb{E}\bigg(\sup_{0\leq s\leq r\wedge\rho_{N}^{(1)}}|X_{s}^{(1)}|^{\bar{p}}\bigg){\rm d}r+C\mathbb{E}\int_{0}^{t\wedge\rho_{N}^{(1)}}(t-r)^{2H-1}{\rm d}r.
\end{align*}
For $Q_{3}^{(1)}(s)$, we use the fractional Burkholder-Davis-Gundy (BDG) inequality. For $H \in (1/2,1)$ and $p\ge 1$, there exists a constant $C_{H,p}>0$ such that
\begin{align}\label{fbdg}
&\mathbb{E}\left[ \sup_{0 \le s \le t}\left| \int_0^s u_r{\rm d}B_r^H \right|^p \right] \le C_{H,p} , \mathbb{E}\left[ \left( \int_0^t |u_r|^{1/H}{\rm d}r \right)^{pH} \right].
\end{align}
Applying this with $p=1$ and $u_r = |X_{r}^{(1)}-D(X_{r-\tau}^{(1)})|^{{\bar{p}}-1} \|\sigma(\mathcal{L}_{{X_{r}^{(0)}}})\|$, and noticing the fact that Assumption \ref{sigmamanzudechushitiaojian} yields $\| \sigma(\mathcal{L}_{X_r^{(0)}}) \| \le C \big( 1 + \mathbb{W}_2(\mathcal{L}_{X_r^{(0)}}, \delta_0) \big) \le C$, we get
\begin{align*}
&\mathbb{E}\left( \sup_{0 \le s \le t \wedge \rho_N^{(1)}} \left| Q_3^{(1)}(s) \right| \right)=\bar{p}\mathbb{E}\bigg(\sup_{0\leq s\leq t\wedge\rho_{N}^{(1)}}\left|\int_{0}^s u_r {\rm d}B_{r}^H\right|\bigg)
\leq   C\mathbb{E}\left[ \left( \int_0^{t\wedge\rho_N^{(1)}} |u_r|^{1/H}{\rm d}r \right)^{H} \right]\\
\leq & C\mathbb{E}\left[ \left( \int_0^{t\wedge\rho_N^{(1)}} |X_{r}^{(1)}-D(X_{r-\tau}^{(1)})|^{({\bar{p}}-1)/H}{\rm d}r \right)^{H} \right].
\end{align*}
For any $0 < \epsilon < 1$, the Young's inequality with exponents $\frac{\bar{p}}{\bar{p} - 1}$ and its conjugate gives
\begin{align*}
|X_r^{(1)} - D\big( X_{r-\tau}^{(1)} \big)|^{\bar{p} - 1} \le \epsilon |X_r^{(1)} - D\big( X_{r-\tau}^{(1)} \big)|^{\bar{p}} + C_\epsilon,
\end{align*}
where $C_\epsilon$ depends on $\epsilon, \bar{p}, H$. Integrating over $[0, t \wedge \rho_N^{(1)}]$, then
\begin{align*}
\int_0^{t \wedge \rho_N^{(1)}} |X_r^{(1)} - D\big( X_{r-\tau}^{(1)} \big)|^{(\bar{p} - 1)/H}  \mathrm{d}r
\le 2^{1/H-1}\epsilon^{1/H} \int_0^{t \wedge \rho_N^{(1)}} |X_r^{(1)} - D\big( X_{r-\tau}^{(1)} \big)|^{\bar{p}/H}  \mathrm{d}r + 2^{1/H-1}C_\epsilon^{1/H}t.
\end{align*}
Since $H \in (1/2, 1)$, we use the elementary inequality to get
\begin{align*}
\left( \int_0^{t \wedge \rho_N^{(1)}} |X_r^{(1)} - D\big( X_{r-\tau}^{(1)} \big)|^{(\bar{p} - 1)/H}  \mathrm{d}r \right)^H
\le& 2^{1-H}\epsilon \left( \int_0^{t \wedge \rho_N^{(1)}} |X_r^{(1)} - D\big( X_{r-\tau}^{(1)} \big)|^{\bar{p}/H}  \mathrm{d}r \right)^H +2^{1-H}C_\epsilon  t^H\\
\le&  2^{1-H}\epsilon t^H \cdot \sup_{0 \le r \le t \wedge \rho_N^{(1)}}|X_r^{(1)} - D\big( X_{r-\tau}^{(1)} \big)|^{\bar{p}}+2^{1-H}C_\epsilon  t^H.
\end{align*}
Taking expectations and using Assumption \ref{zhonglixiangdetiaojian}
\begin{align*}
\mathbb{E} \left( \sup_{0 \le r \le t \wedge \rho_N^{(1)}} |X_r^{(1)} - D\big( X_{r-\tau}^{(1)} \big)|^{\bar{p}} \right)
\le C \left( 1 + \mathbb{E} \left( \sup_{0 \le r \le t \wedge \rho_N^{(1)}} \left| X_r^{(1)} \right|^{\bar{p}} \right) \right).
\end{align*}
Combining all estimations, we arrive at
\begin{align}\label{Q3}
&\mathbb{E}\left( \sup_{0 \le s \le t \wedge \rho_N^{(1)}} \left| Q_3^{(1)}(s) \right| \right)
\le C \epsilon\mathbb{E} \left( \sup_{0 \le s \le t \wedge \rho_N^{(1)}} \left| X_s^{(1)} \right|^{\bar{p}} \right)  + C_\epsilon.
\end{align}
Choosing $\epsilon$ sufficiently small so that $C \epsilon< \frac{1}{2}$, and combining with the estimates for $Q_1^{(1)}(t)$ and $Q_2^{(1)}(t)$, we get
\begin{align*}
  \mathbb{E}\bigg(\sup_{0\leq s\leq t\wedge\rho_{N}^{(1)}}|X_{s}^{(1)}-D(X_{s-\tau}^{(1)})|^{\bar{p}}\bigg)
  \leq&C+C\int_{0}^t\mathbb{E}\bigg(\sup_{0\leq s\leq r\wedge\rho_{N}^{(1)}}|X_{s}^{(1)}|^{\bar{p}}\bigg){\rm d}r.
\end{align*}
By the Gronwall inequality together with \eqref{Xtdebudengshi}, we derive
\begin{align*}
  \mathbb{E}\bigg(\sup_{0\leq s\leq t\wedge\rho_{N}^{(1)}}|X_{s}^{(1)}|^{\bar{p}}\bigg)
  \leq &C.
\end{align*}
The Fatou lemma then leads to
\begin{align*}
\mathbb{E}\bigg(\sup_{0\leq s\leq t}|X_{s}^{(1)}|^{\bar{p}}\bigg)  \leq C.
\end{align*}
For $i=2$, we estimate the corresponding $Q_{1}^{(2)}(s), Q_{2}^{(2)}(s)$ and $Q_{3}^{(2)}(s)$ as follows
\begin{align*}
\mathbb{E}\bigg(\sup_{0\leq s\leq t\wedge\rho_{N}^{(2)}}Q_{1}^{(2)}(s)\bigg)
  \leq &C\bigg[\int_{0}^{t}\mathbb{E}\bigg(\sup_{0\leq s\leq r\wedge\rho_{N}^{(2)}}|X_{s}^{(2)}|^{\bar{p}}\bigg){\rm d}r+\int_{0}^{t}\mathbb{E}\bigg(\sup_{0\leq s\leq r\wedge\rho_{N}^{(2)}}|X_{s}^{(1)}|^{\bar{p}}\bigg){\rm d}r+1\bigg],
\end{align*}
\begin{align*}
\mathbb{E}\bigg(\sup_{0\leq s\leq t\wedge\rho_{N}^{(2)}}Q_{2}^{(2)}(s)\bigg)\leq &C\int_{0}^t\mathbb{E}\bigg(\sup_{0\leq s\leq r\wedge\rho_{N}^{(2)}}|X_{s}^{(2)}|^{\bar{p}}\bigg){\rm d}r+C\int_{0}^t(t-r)^{2H-1}\mathbb{E}\bigg(\sup_{0\leq s\leq r\wedge\rho_{N}^{(2)}}|X_{s}^{(1)}|^{\bar{p}}\bigg){\rm d}r,
\end{align*}
and
\begin{align*}
\mathbb{E}\bigg(\sup_{0\leq s\leq t\wedge\rho_{N}^{(2)}}Q_{3}^{(2)}(s)\bigg)\leq &C \epsilon\mathbb{E} \left( \sup_{0 \le s \le t \wedge \rho_N^{(2)}} \left| X_s^{(2)} \right|^{\bar{p}} \right)+C_{\epsilon}\int_{0}^{t}\mathbb{E}\bigg(\sup_{0\leq s\leq r\wedge\rho_{N}^{(2)}}|X_{s}^{(2)}|^{\bar{p}}\bigg){\rm d}r\\
&+C\int_{0}^t\mathbb{E}\bigg(\sup_{0\leq s\leq r\wedge\rho_{N}^{(2)}}|X_{s}^{(1)}|^{\bar{p}}\bigg){\rm d}r.
\end{align*}
Consequently, choosing $\epsilon$ sufficiently small so that $C \epsilon< \frac{1}{2}$ and using the Fatou lemma, the Gronwall inequality, it holds
\begin{align*}
\mathbb{E}\bigg(\sup_{0\leq s\leq t}|X_{s}^{(2)}|^{\bar{p}}\bigg)  \leq C.
\end{align*}
Finally, \eqref{pmoment} can be shown by induction.\\
{\bf Step 2. Existence} Define the difference processes $\Delta_{s}^{(k+1),(k)}=X^{(k+1)}_{s}-X_{s}^{(k)}$, $\Upsilon_{s}^{(k+1),(k)}=X^{(k+1)}_{s}-D(X_{s-\tau}^{(k+1)})-X_{s}^{(k)}+D(X_{s-\tau}^{(k)})$.
By Lemma \ref{Maodebudengshi}, we have
\begin{align*}
|\Delta_{s}^{(k+1),(k)}|^{\bar{p}}\leq\lambda|\Delta_{s-\tau}^{(k+1),(k)}|^{\bar{p}}+\frac{1}{(1-\lambda)^{\bar{p}-1}}|\Upsilon_{s}^{(k+1),(k)}|^{\bar{p}},
\end{align*}
Taking supremum and expectation gives
\begin{align}\label{zhong}
\mathbb{E}\left(\sup_{0 \leq s \leq t} |\Delta_{s}^{(k+1),(k)}|^{\bar{p}}\right) \leq \frac{1}{(1-\lambda)^{\bar{p}}} \mathbb{E}\left(\sup_{0 \leq s \leq t} |\Upsilon_{s}^{(k+1),(k)}|^{\bar{p}}\right).
\end{align}
For $t\in[0,T]$, employment of the fractional It\^{o} formula in Lemma \ref{Itoformula} and the elementary inequality provides
\begin{align*}
&\mathbb{E}\bigg(\sup_{0\leq s\leq t}|\Upsilon_{s}^{(k+1),(k)}|^{\bar{p}}\bigg)\\
\leq &{\bar{p}}\mathbb{E}\int_{0}^t|\Upsilon_{r}^{(k+1),(k)}|^{{\bar{p}}-2}\langle \Upsilon_{r}^{(k+1),(k)}, b(X^{(k+1)}_{r},X^{(k+1)}_{r-\tau},\mathcal{L}_{X^{(k)}_{r}})-b(X^{(k)}_{r},X^{(k)}_{r-\tau},\mathcal{L}_{X^{(k-1)}_{r}})\rangle{\rm d}r\\
&+\mathbb{E}\bigg(\sup_{0\leq s\leq t}\bigg\lvert\int_{0}^s {\bar{p}}({\bar{p}}-1)H(2H-1)|\Upsilon_{r}^{(k+1),(k)}|^{{\bar{p}}-2}\\
&\times\bigg(\int_{0}^r(r-u)^{2H-2}\|\sigma(\mathcal{L}_{X^{(k)}_{u}})-\sigma(\mathcal{L}_{X^{(k-1)}_{u}})\|^2{\rm d}u\bigg){\rm d}r\bigg)\\
&+{\bar{p}}\mathbb{E}\bigg(\sup_{0\leq s\leq t}\int_{0}^s|\Upsilon_{r}^{(k+1),(k)}|^{{\bar{p}}-1}\|\sigma(\mathcal{L}_{X^{(k)}_{r}})-\sigma(\mathcal{L}_{X^{(k-1)}_{r}})\|{\rm d}B_{r}^H\bigg).\\
:=&J_{1}(t)+J_{2}(t)+J_{3}(t).
\end{align*}
Applying Assumption \ref{danbiantiaojian} and the Young's inequality produces
\begin{align*}
  J_{1}(t)=&{\bar{p}}\mathbb{E}\int_{0}^t|\Upsilon_{r}^{(k+1),(k)}|^{{\bar{p}}-2}\langle \Upsilon_{r}^{(k+1),(k)}, b(X^{(k+1)}_{r},X^{(k+1)}_{r-\tau},\mathcal{L}_{X^{(k)}_{r}})-b(X^{(k)}_{r},X^{(k)}_{r-\tau},\mathcal{L}_{X^{(k-1)}_{r}})\rangle{\rm d}r\\
  \leq &C\mathbb{E}\int_{0}^t|X^{(k+1)}_{r}-D(X^{(k+1)}_{r-\tau})+X^{(k)}_{r}-D(X^{(k)}_{r-\tau})|^{\bar{p}}{\rm d}r\\
  &+C\mathbb{E}\int_{0}^t\bigg[|X^{(k+1)}_{r}-X^{(k)}_{r}|^2+|X^{(k+1)}_{r-\tau}-X^{(k)}_{r-\tau}|^2+\mathbb{W}_2^2(\mathcal{L}_{X^{(k)}_{r}},\mathcal{L}_{X^{(k-1)}_{r}})\bigg]^{{\bar{p}}/2}{\rm d}r\\
  \leq &C\int_{0}^t\mathbb{E}\bigg(\sup_{0\leq s\leq r}|\Delta_s^{(k+1),(k)}|^{\bar{p}}\bigg){\rm d}r+C\int_{0}^t\mathbb{E}\bigg(\sup_{0\leq s\leq r}|\Delta_s^{(k),(k-1)}|^{\bar{p}}\bigg){\rm d}r,
\end{align*}
where we used the fact that $\mathbb{W}_2^2(\mathcal{L}_{X^{(k)}_{r}},\mathcal{L}_{X^{(k-1)}_{r}}) \le \mathbb{E}^{\mathbb{P}}|X^{(k)}_{r}-X^{(k-1)}_{r}|^2$. Along with Assumption \ref{sigmamanzudechushitiaojian}, the Young's inequality, the H\"{o}lder inequality, and the Fubini theorem, it follows
\begin{align*}
J_{2}(t)=&\mathbb{E}\bigg(\sup_{0\leq s\leq t}\bigg\lvert\int_{0}^s {\bar{p}}({\bar{p}}-1)H(2H-1)|\Upsilon_{r}^{(k+1),(k)}|^{{\bar{p}}-2}\\
&\times\left(\int_{0}^r(r-u)^{2H-2}\|\sigma(\mathcal{L}_{X^{(k)}_{u}})-\sigma(\mathcal{L}_{X^{(k-1)}_{u}})\|^2{\rm d}u\bigg){\rm d}r\right)\\
\leq &C\int_{0}^t\int_{0}^r(r-u)^{2H-2}(\mathbb{E}|\Upsilon_{r}^{(k+1),(k)}|^{\bar{p}}+\mathbb{E}|X^{(k)}_{u}-X^{(k-1)}_{u}|^{\bar{p}}){\rm d}u{\rm d}r\\
\leq &C\int_{0}^t\mathbb{E}|\Upsilon_{r}^{(k+1),(k)}|^{\bar{p}}\bigg(\int_{0}^r(r-u)^{2H-2}{\rm d}u\bigg){\rm d}r+C\int_{0}^t\int_{u}^t(r-u)^{2H-2}\mathbb{E}|X^{(k)}_{u}-X^{(k-1)}_{u}|^{\bar{p}}{\rm d}r{\rm d}u\\
\leq &C\int_{0}^t\frac{r^{2H-1}}{2H-1}\mathbb{E}|\Upsilon_{r}^{(k+1),(k)}|^{\bar{p}}{\rm d}r+C\int_{0}^t\frac{(t-u)^{2H-1}}{2H-1}\mathbb{E}|X^{(k)}_{u}-X^{(k-1)}_{u}|^{\bar{p}}{\rm d}u.
\end{align*}
Since $1/2< H< 1$ and $t\in[0,T]$, we use Assumption \ref{zhonglixiangdetiaojian} to get
\begin{align*}
J_{2}(t)\leq &C\int_{0}^t\mathbb{E}\bigg(\sup_{0\leq s\leq r}|\Delta_s^{(k+1),(k)}|^{\bar{p}}\bigg){\rm d}r+C\int_{0}^t(t-r)^{2H-1}\mathbb{E}\bigg(\sup_{0\leq s\leq r}|\Delta_s^{(k),(k-1)}|^{\bar{p}}\bigg){\rm d}r.
\end{align*}
In the similar way as dealing with $Q_3^{(1)}(t)$ to get \eqref{Q3}, an application of fractional BDG inequality \eqref{fbdg} with $p=1$ yields
\begin{align*}
J_{3}(t)= & {\bar{p}} \mathbb{E}\left[ \sup_{0\leq s\leq t} \left| \int_{0}^s |\Upsilon_{r}^{(k+1),(k)}|^{{\bar{p}}-1} \|\sigma(\mathcal{L}_{X^{(k)}_{r}}) - \sigma(\mathcal{L}_{X^{(k-1)}_{r}})\| {\rm d}B_{r}^H \right| \right]\\
 \le &C \mathbb{E}\left[ \left( \int_0^t \left( |\Upsilon_{r}^{(k+1),(k)}|^{{\bar{p}}-1} \|\sigma(\mathcal{L}_{X^{(k)}_{r}}) - \sigma(\mathcal{L}_{X^{(k-1)}_{r}})\| \right)^{1/H} {\rm d}r \right)^{H} \right]
\end{align*}
For each $r \in [0,t]$, by the Young's inequality, there exists $0<\epsilon<1$ such that
\begin{align*}
&|\Upsilon_{r}^{(k+1),(k)}|^{{\bar{p}}-1} \|\sigma(\mathcal{L}_{X^{(k)}_{r}}) - \sigma(\mathcal{L}_{X^{(k-1)}_{r}})\|\leq \epsilon |\Upsilon_{r}^{(k+1),(k)}|^{{\bar{p}}} + C_\epsilon \|\sigma(\mathcal{L}_{X^{(k)}_{r}}) - \sigma(\mathcal{L}_{X^{(k-1)}_{r}})\|^{{\bar{p}}}.
\end{align*}
Assumption \ref{sigmamanzudechushitiaojian} implies that
\begin{align*}
\|\sigma(\mathcal{L}_{X^{(k)}_{r}}) - \sigma(\mathcal{L}_{X^{(k-1)}_{r}})\| \leq K_4 \mathbb{W}_2(\mathcal{L}_{X^{(k)}_{r}}, \mathcal{L}_{X^{(k-1)}_{r}}) \leq K_4 (\mathbb{E}|\Delta_r^{(k),(k-1)}|^2)^{1/2}.
\end{align*}
Since ${\bar{p}} \geq 2$, using the fact that $(\mathbb{E}|\Upsilon_r^{(k),(k-1)}|^2)^{{\bar{p}}/2} \leq \mathbb{E}|\Upsilon_r^{(k),(k-1)}|^{{\bar{p}}}$, then
\begin{align*}
&|\Upsilon_{r}^{(k+1),(k)}|^{{\bar{p}}-1} \|\sigma(\mathcal{L}_{X^{(k)}_{r}}) - \sigma(\mathcal{L}_{X^{(k-1)}_{r}})\|
\leq \epsilon |\Upsilon_{r}^{(k+1),(k)}|^{{\bar{p}}} + C_\epsilon (\mathbb{E}|\Delta_r^{(k),(k-1)}|^2)^{{\bar{p}}/2}\\
\leq& \epsilon |\Upsilon_{r}^{(k+1),(k)}|^{{\bar{p}}} + C_\epsilon \mathbb{E}|\Delta_r^{(k),(k-1)}|^{{\bar{p}}}.
\end{align*}
Noting that for $\frac{1}{H}>1$, we can apply the elementary inequality to get
\begin{align*}
&\left( \int_0^t \left( \epsilon |\Upsilon_{r}^{(k+1),(k)}|^{{\bar{p}}} + C_\epsilon \mathbb{E}|\Delta_r^{(k),(k-1)}|^{{\bar{p}}} \right)^{1/H} {\rm d}r \right)^{H} \\
\leq& 2^{1-H}\epsilon \left( \int_0^t |\Upsilon_{r}^{(k+1),(k)}|^{{\bar{p}}/H} {\rm d}r \right)^{H} + C_\epsilon  2^{1-H}\left( \int_0^t \left( \mathbb{E}|\Delta_r^{(k),(k-1)}|^{{\bar{p}}} \right)^{1/H} {\rm d}r \right)^{H}.
\end{align*}
Taking expectation, it holds
\begin{align*}
&\mathbb{E}\left[\left( \int_0^t |\Upsilon_{r}^{(k+1),(k)}|^{{\bar{p}}/H} {\rm d}r \right)^{H} \right]
\leq t^H \mathbb{E}\left( \sup_{0\leq r \leq t} |\Upsilon_{r}^{(k+1),(k)}|^{{\bar{p}}} \right),
\end{align*}
and
\begin{align*}
&\mathbb{E}\left[ \left( \int_0^t \left( \mathbb{E}|\Delta_r^{(k),(k-1)}|^{{\bar{p}}} \right)^{1/H} {\rm d}r \right)^{H} \right]
\leq Ct^H \int_0^t \mathbb{E}|\Delta_r^{(k),(k-1)}|^{{\bar{p}}} {\rm d}r.
\end{align*}
Combining the estimates together, we conclude
\begin{align}\label{J3}
J_{3}(t)\leq & C\epsilon \mathbb{E}\left(\sup_{0\leq s \leq t} |\Upsilon_{s}^{(k+1),(k)}|^{{\bar{p}}} \right)+ C_\epsilon \int_0^t \mathbb{E}\left( \sup_{0\leq s \leq r} |\Delta_s^{(k),(k-1)}|^{{\bar{p}}} \right){\rm d}r.
\end{align}
Finally, the estimation of $J_3(t)$ combined with the estimates for the drift terms $J_1(t)$ and $J_2(t)$, we obtain
\begin{align*}
&\mathbb{E}\left(\sup_{0\leq s \leq t} |\Upsilon_s^{(k+1),(k)}|^{\bar{p}} \right)\\
\leq & C\epsilon \mathbb{E}\left( \sup_{0\leq s \leq t} |\Upsilon_s^{(k+1),(k)}|^{\bar{p}} \right)+C\int_{0}^t\mathbb{E}\bigg(\sup_{0\leq s\leq r}|\Delta_s^{(k+1),(k)}|^{\bar{p}}\bigg){\rm d}r\\
&+C\int_{0}^t\mathbb{E}\bigg(\sup_{0\leq s\leq r}|\Delta_s^{(k),(k-1)}|^{\bar{p}}\bigg){\rm d}r+C\int_{0}^t(t-r)^{2H-1}\mathbb{E}\bigg(\sup_{0\leq s\leq r}|\Delta_s^{(k),(k-1)}|^{\bar{p}}\bigg){\rm d}r.
\end{align*}
Choosing $\epsilon$ small enough so that $C\epsilon < \frac{1}{2}$ and absorbing the first term, it implies
\begin{align*}
\mathbb{E}\left(\sup_{0\leq s \leq t} |\Upsilon_s^{(k+1),(k)}|^{\bar{p}} \right)\le & C\int_{0}^t\mathbb{E}\bigg(\sup_{0\leq s\leq r}|\Delta_s^{(k+1),(k)}|^{\bar{p}}\bigg){\rm d}r+C\int_{0}^t\mathbb{E}\bigg(\sup_{0\leq s\leq r}|\Delta_s^{(k),(k-1)}|^{\bar{p}}\bigg){\rm d}r\\
&+C\int_{0}^t(t-r)^{2H-1}\mathbb{E}\bigg(\sup_{0\leq s\leq r}|\Delta_s^{(k),(k-1)}|^{\bar{p}}\bigg){\rm d}r.
\end{align*}
Since singular kernels $\int_{0}^t(t-r)^{2H-1}\mathbb{E}\bigg(\sup_{0\leq s\leq r}|\Delta_s^{(k),(k-1)}|^{\bar{p}}\bigg){\rm d}r$ are typically ``stronger" than regular kernels $\int_{0}^t(t-r)^{0}\mathbb{E}\bigg(\sup_{0\leq s\leq r}|\Delta_s^{(k),(k-1)}|^{\bar{p}}\bigg){\rm d}r$, we derive from \eqref{zhong} that
\begin{align*}
&\mathbb{E}\bigg(\sup_{0\le s\le t}|\Delta_s^{(k+1),(k)}|^{\bar{p}}\bigg)\\
\leq&C\int_{0}^t\mathbb{E}\bigg(\sup_{0\leq s\leq r}|\Delta_s^{(k+1),(k)}|^{\bar{p}}\bigg){\rm d}r+C\int_{0}^t(t-r)^{2H-1}\mathbb{E}\bigg(\sup_{0\leq s\leq r}|\Delta_s^{(k),(k-1)}|^{\bar{p}}\bigg){\rm d}r.
\end{align*}
Using the Gronwall inequality leads to
\begin{align*}
\mathbb{E}\bigg(\sup_{0\le s\le t}|\Delta_s^{(k+1),(k)}|^{\bar{p}}\bigg)
\leq Ce^{Ct}\int_{0}^t(t-r)^{2H-1}\mathbb{E}\bigg(\sup_{0\leq s\leq r}|\Delta_s^{(k),(k-1)}|^{\bar{p}}\bigg){\rm d}r.
\end{align*}
By induction, we arrive at
\begin{align*}
\mathbb{E}\bigg(\sup_{0\le s\le t}|\Delta_s^{(k+1),(k)}|^{\bar{p}}\bigg)
\leq\frac{(Ce^{Ct}\Gamma(2H)t^{2H})^{k}}{\Gamma(2kH+1)}\mathbb{E}\bigg(\sup_{0\le s\le t}|\Delta_s^{(1),(0)}|^{\bar{p}}\bigg).
\end{align*}
With the properties of the Gamma function and the fact that $\mathbb{E}\bigg(\sup\limits_{0\le s\le t}|\Delta_s^{(1),(0)}|^{\bar{p}}\bigg)\le C$ derived by \eqref{pmoment}, we can show that $\{X_{t}^{(k)}\}$ is a Cauchy sequence, then the limit, denoted by $X$, is a solution of \eqref{MVdrivenbyfBm}.\\
{\bf Step 3. Uniqueness} Let $X_{t}$ and $Y_{t}$ be two solutions of \eqref{MVdrivenbyfBm} with $X_{t}=Y_{t}=\xi,\ \ t\in[-\tau,0]$.
By Lemma \ref{Maodebudengshi} again, we establish
\begin{align*}
\sup_{0\leq r\leq t}|X_{r}-Y_{r}|^{\bar{p}}\leq \frac{1}{(1-\lambda)^{\bar{p}}}\sup_{0\leq r\leq t}|\Upsilon_{r}|^{\bar{p}},
\end{align*}
where $\Upsilon_{r}=X_{r}-D(X_{r-\tau})-Y_{r}+D(Y_{r-\tau})$. Similar to the process of Step 2, it holds that
\begin{align*}
\mathbb{E}\bigg(\sup_{0\le s\le t}|X_{s}-Y_{s}|^{\bar{p}}\bigg)
\leq&C\int_{0}^t\mathbb{E}\bigg(\sup_{0\leq s\leq r}|X_{s}-Y_{s}|^{\bar{p}}\bigg){\rm d}r
+C\int_{0}^t(t-r)^{2H-1}\mathbb{E}\bigg(\sup_{0\leq s\leq r}|X_{s}-Y_{s}|^{\bar{p}}\bigg){\rm d}r.
\end{align*}
Further, we derive
\begin{align*}
\mathbb{E}\bigg(\sup_{0\le s\le t}|X_{s}-Y_{s}|^{\bar{p}}\bigg)
\leq C\int_{0}^t(t-r)^{2H-1}\mathbb{E}\bigg(\sup_{0\leq s\leq r}|X_{s}-Y_{s}|^{\bar{p}}\bigg){\rm d}r.
\end{align*}
Although the kernel $(t-r)^{2H-1}$ is singular as $2H-1>0$, it is integrable on [0, T]. Therefore, applying Lemma \ref{Hlemma} with $a=0$, $\beta=2H$ and $\gamma=1$, we find that the generalized Mittag-Leffler function is bounded, guaranteeing a unique zero solution for the inequality. That is,
 \begin{align*}
 \mathbb{E}\bigg(\sup_{0\le s\le t}|X_{s}-Y_{s}|^{\bar{p}}\bigg)=0.
 \end{align*}
Therefore, $X_t = Y_t$ almost surely for all $t \in [0,T]$. This completes the proof.
\end{proof}

\begin{rem}
{\rm For the existence and uniqueness of the McKean-Vlasov NSDDEs, we adopt the Picard approximation method to construct a Cauchy sequence and show the existence and uniqueness of the solution. Since the super-linearly growing coefficients in our proposed model introduces additional technical complexities that demand refined analytical tools, our analysis differs fundamentally from those of related works. For McKean-Vlasov SDEs driven by standard Brownian motions (e.g., \cite{SXW22}) or by fractional Brownian motions without super-linearly growing coefficients (e.g., \cite{WXT25, GGL25}), the uniqueness is typically proven via direct estimates of the difference between two candidate solutions. In contrast, our model involves fractional noise and super-linear coefficients, which necessitates the use of the fractional It\^{o} formula to properly account for the non-Markovian and rough nature of the fractional Brownian motions. This application of the fractional It\^{o} formula not only resolves the technical hurdles posed by the fractional noise but also provides a generalizable framework for analyzing McKean-Vlasov SDEs driven by fractional Brownian motions.}
\end{rem}

\section{Propagation of Chaos}
Since the coefficients of \eqref{MVdrivenbyfBm} depend on the distributions of state variables, we use stochastic interacting particle systems to approximate it. Firstly, considering a non-interacting particle system given by
\begin{align}\label{noninteractingparticlesystem}
{\rm d}(X_{t}^i-D(X_{t-\tau}^i))=b(X_{t}^i,X_{t-\tau}^i,\mathcal{L}_{X_{t}^i}){\rm d}t+\sigma(\mathcal{L}_{X_{t}^i}){\rm d}B_{t}^{H,i},\ \ X_{0}^i=Y_{0}^i=\xi,\ \ i=1,2,\cdots,N,
\end{align}
here, $|\xi_{t}^i-\xi_{s}^i|^p\leq |t-s|^p$. To deal with $\mathcal{L}_{X_{t}^i}$, we exploit the following interacting particle system of the form
\begin{align}\label{interactingparticlesystem}
{\rm d}(X_{t}^{i,N}-D(X_{t-\tau}^{i,N}))=b(X_{t}^{i,N},X_{t-\tau}^{i,N},\mu_{t}^{X,N}){\rm d}t+\sigma(\mu_{t}^{X,N}){\rm d}B_{t}^{H,i},\ \ i=1,2,\cdots,N,
\end{align}
where the empirical measures is defined by
\begin{align*}
\mu^{X,N}_{t}=\frac{1}{N}\sum_{i=1}^N\delta_{X_{t}^{i,N}},
\end{align*}
here, $\delta_{x}$ denotes the Dirac measure at point $x$. In order to prove the desired result, we give the following Lemma.

\begin{lem}\cite[Lemma 3.2]{HGZ24}\label{measuredistance}
{\rm For two empirical measures $\mu_{t}^N=\frac{1}{N}\sum\limits_{i=1}\limits^{N}\delta_{Z^{1,i,N}_{t}}$ and $\nu_{t}^N=\frac{1}{N}\sum\limits_{i=1}\limits^{N}\delta_{Z_{t}^{2,i,N}}$, then}
\begin{align*}
\mathbb{E}\mathbb{W}_{2}(\mu_{t}^N,\nu_{t}^N)\leq \mathbb{E}\bigg(\frac{1}{N}\sum_{i=1}^N\bigg\lvert Z_{t}^{1,i,N}-Z_{t}^{2,i,N}\bigg\rvert^{2}\bigg)^{1/2}.
\end{align*}
\end{lem}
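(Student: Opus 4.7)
The strategy is to exhibit an explicit (though not necessarily optimal) coupling between the two empirical measures and then apply the variational definition of the Wasserstein distance. Since the infimum in the definition of $\mathbb{W}_q$ is taken over all $\pi\in\Pi(\mu_t^N,\nu_t^N)$, it suffices to produce one admissible coupling whose cost yields the desired bound.

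The plan is as follows. First, I would construct the empirical pairing measure
\begin{align*}
\pi_t^N(\mathrm{d}x,\mathrm{d}y):=\frac{1}{N}\sum_{j=1}^N \delta_{(Z_t^{1,j,N},\,Z_t^{2,j,N})}(\mathrm{d}x,\mathrm{d}y),
\end{align*}
and verify that $\pi_t^N\in\Pi(\mu_t^N,\nu_t^N)$, that is, its first marginal equals $\mu_t^N$ and its second marginal equals $\nu_t^N$. This is immediate from the definition of $\mu_t^N$ and $\nu_t^N$ by projecting the Dirac masses onto the first and second coordinate, respectively.

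Next, using $\pi_t^N$ as an admissible (sub-optimal) coupling in the definition of $\mathbb{W}_q$, I obtain
\begin{align*}
\big(\mathbb{W}_q(\mu_t^N,\nu_t^N)\big)^{q}\le \int_{\mathbb{R}^d\times\mathbb{R}^d}|x-y|^{q}\,\pi_t^N(\mathrm{d}x,\mathrm{d}y)=\frac{1}{N}\sum_{j=1}^N\big|Z_t^{1,j,N}-Z_t^{2,j,N}\big|^{q}.
\end{align*}
Taking the $q$-th root on both sides and then applying the expectation yields the desired inequality
\begin{align*}
\mathbb{E}\big(\mathbb{W}_q(\mu_t^N,\nu_t^N)\big)\le \mathbb{E}\bigg(\frac{1}{N}\sum_{j=1}^N\big|Z_t^{1,j,N}-Z_t^{2,j,N}\big|^{q}\bigg)^{1/q}.
\end{align*}
There is essentially no technical obstacle here: the only point requiring a moment of care is confirming that the discrete pairing measure indeed has the prescribed marginals, after which everything follows from the variational characterization of $\mathbb{W}_q$. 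In particular, no use of Assumptions \ref{chushizhixidetiaojian}--\ref{sigmamanzudechushitiaojian}, no It\^o formula, and no fractional-Brownian estimate is needed; the statement is purely a fact about empirical measures on $\mathbb{R}^d$.
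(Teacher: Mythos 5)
Your proof is correct and complete: the diagonal pairing measure $\pi_t^N=\frac{1}{N}\sum_{j=1}^N\delta_{(Z_t^{1,j,N},Z_t^{2,j,N})}$ is indeed an admissible coupling of $\mu_t^N$ and $\nu_t^N$, and the variational definition of $\mathbb{W}_q$ then gives the pathwise bound $\mathbb{W}_q(\mu_t^N,\nu_t^N)\le\big(\frac{1}{N}\sum_{j=1}^N|Z_t^{1,j,N}-Z_t^{2,j,N}|^q\big)^{1/q}$, after which taking expectations is immediate. The paper does not prove this lemma itself but simply cites it from \cite[Lemma 3.2]{HGZ24}, so there is no in-paper argument to compare against; your coupling argument is the standard proof of the cited result and correctly identifies that none of the paper's structural assumptions or stochastic-calculus machinery is needed.
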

Next, we consider the theory of the propagation of chaos.

\begin{lem}\label{propagationyoujie}
{\rm Let Assumptions \ref{chushizhixidetiaojian}-\ref{sigmamanzudechushitiaojian} hold. For $\bar{p}\ge2$, we have
\begin{align*}
\mathbb{E}\bigg(\sup_{t\in[-\tau,T]}|X_{t}^{i}|^{\bar{p}}\bigg)+\mathbb{E}\bigg(\sup_{t\in[-\tau,T]}|X_{t}^{i,N}|^{\bar{p}}\bigg)\leq C,\ \ i=1,2,\cdots,N,
\end{align*}
where $C$ is a positive constant depending on ${\bar{p}},T,L,H, \xi$ but independent of $N$.}
\end{lem}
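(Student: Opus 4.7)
The plan is to establish the two bounds in parallel, following the blueprint of Lemma \ref{Xtnyoujie}. The non-interacting system \eqref{noninteractingparticlesystem} consists of $N$ independent copies of the McKean--Vlasov equation \eqref{MVdrivenbyfBm}, so each $X^i_t$ has the same law as the solution constructed in Step~1 of the existence proof. Hence the first bound is inherited from Lemma \ref{Xtnyoujie} by passing to the limit in \eqref{Cauchysequencelimit} and applying Fatou's lemma to transfer the uniform moment bound from the Carath\'eodory approximation $X^n_t$ to $X_t$. The interacting particle case is the substantive one.

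For $X_t^{i,N}$, I would first apply Lemma \ref{Maodebudengshi} together with Assumption \ref{zhonglixiangdetiaojian} exactly as in \eqref{Xtdebudengshi} to split
\begin{align*}
\sup_{0\leq r\leq t}|X_r^{i,N}|^{\bar{p}}\leq \frac{\lambda}{1-\lambda}\parallel\xi\parallel^{\bar{p}}+\frac{1}{(1-\lambda)^{\bar{p}}}\sup_{0\leq r\leq t}|X_r^{i,N}-D(X_{r-\tau}^{i,N})|^{\bar{p}},
\end{align*}
and then substitute the integral form of \eqref{interactingparticlesystem} on the right-hand side. The drift is controlled by Remark \ref{remarkone}, which produces a superlinear $(l+1)\bar{p}$-moment; the fractional stochastic integral is controlled by the stochastic Fubini plus H\"older argument already carried out in \eqref{fbmjifen}, using Assumption \ref{sigmamanzudechushitiaojian} and the choice $1-H<\beta<1-1/(\bar{p}H)$.

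The new ingredient relative to Lemma \ref{Xtnyoujie} is the appearance of the empirical measure $\mu_t^{X,N}$ in place of the law $\mathcal{L}_{X_{t}^{i,N}}$. For this I would use the identity $\mathbb{W}_q(\mu_t^{X,N},\delta_0)^{\bar{p}}=\big(\frac{1}{N}\sum_{j=1}^N|X_t^{j,N}|^q\big)^{\bar{p}/q}$, which by Jensen's inequality (since $\bar{p}\geq q$) is bounded by $\frac{1}{N}\sum_{j=1}^N|X_t^{j,N}|^{\bar{p}}$. By exchangeability of the particles, taking the supremum and expectation reduces this to $\mathbb{E}(\sup_{r\leq t}|X_r^{i,N}|^{\bar{p}})$, closing the estimate back onto the quantity we are trying to bound.

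Assembling the pieces, the superlinear $(l+1)\bar{p}$ exponent generated by Remark \ref{remarkone} forces the same inductive scheme used in Lemma \ref{Xtnyoujie} via the decreasing sequence $u_i=([T/\tau]+2-i)\bar{p}(l+1)^{[T/\tau]+1-i}$: on each subinterval $[0,k\tau]$ the delayed terms $|X_{r-\tau}^{i,N}|^{(l+1)\bar{p}}$ are controlled either by Assumption \ref{chushizhixidetiaojian} (first block) or by the previous block's estimate, and a Gronwall step on the block yields the bound at the next power. The main obstacle I anticipate is ensuring that every constant is independent of $N$; this is exactly what the exchangeability reduction above secures, after which the remainder of the argument is a mechanical translation of the Carath\'eodory proof in Lemma \ref{Xtnyoujie} to the particle system \eqref{interactingparticlesystem}.
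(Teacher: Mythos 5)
Your overall architecture (splitting off the neutral term via Lemma \ref{Maodebudengshi}, the \eqref{fbmjifen}-type bound for the fractional stochastic integral, the exchangeability/Jensen reduction of $\mathbb{W}_{q}(\mu_{t}^{X,N},\delta_{0})^{\bar{p}}$ to $\mathbb{E}|X_{t}^{i,N}|^{\bar{p}}$, and the identification of $X^{i}$ with the McKean--Vlasov solution) matches the paper, and the empirical-measure step is exactly right. But there is a genuine gap in how you treat the drift. You propose to substitute the integral form of \eqref{interactingparticlesystem} and control $\big|\int_{0}^{t}b(X_{s}^{i,N},X_{s-\tau}^{i,N},\mu_{s}^{X,N})\,{\rm d}s\big|^{\bar{p}}$ by the polynomial growth bound of Remark \ref{remarkone}, which produces the term
\begin{align*}
C\int_{0}^{t}\mathbb{E}\Big(|X_{s}^{i,N}|^{(l+1)\bar{p}}+|X_{s-\tau}^{i,N}|^{(l+1)\bar{p}}\Big)\,{\rm d}s
\end{align*}
inside the estimate for the $\bar{p}$-th moment. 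Your block induction with the decreasing powers $u_{i}$ only disposes of the \emph{delayed} superlinear term $|X_{s-\tau}^{i,N}|^{(l+1)\bar{p}}$; the \emph{non-delayed} term $\mathbb{E}|X_{s}^{i,N}|^{(l+1)\bar{p}}$ involves the current state at a strictly higher power than the quantity you are bounding, and neither Gronwall nor the induction over delay blocks can absorb it --- iterating upward in powers never terminates. The reason this route is available in Lemma \ref{Xtnyoujie} is specific to the Carath\'eodory scheme: there the drift is evaluated at $X_{s-1/n}^{n}$, so even the ``current'' argument carries an extra lag $1/n$ and the superlinear powers can be pushed back onto already-determined pieces. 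The particle system \eqref{interactingparticlesystem} has no such lag, so the blueprint of Lemma \ref{Xtnyoujie} does not transfer.

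The paper avoids this by applying the fractional It\^{o} formula to $|X_{t}^{i,N}-D(X_{t-\tau}^{i,N})|^{\bar{p}}$ and using the \emph{one-sided} (Khasminskii-type) bound from the second display of Remark \ref{remarkone},
\begin{align*}
\langle x-D(y),b(x,y,\mu)\rangle\leq C\big(1+|x|^{2}+|y|^{2}+\mathbb{W}_{q}^{2}(\mu,\delta_{0})\big),
\end{align*}
in the term $I_{1}$, together with Young's inequality. This keeps every term on the right-hand side at power $\bar{p}$ (not $(l+1)\bar{p}$), so the Gronwall argument closes on each delay interval with a fixed exponent and no decreasing power sequence is needed. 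You should replace your direct drift estimate with this It\^{o}-formula/one-sided-condition step; the rest of your argument (the $I_{2}$, $I_{3}$ analogues, the exchangeability reduction, and the transfer to $X^{i}$ by Fatou) then goes through as you describe.
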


\begin{proof}
We will proceed the proof in the similar way as Theorem \ref{Xtnyoujie}. Firstly, Assumption \ref{zhonglixiangdetiaojian} and Lemma \ref{Maodebudengshi} imply that
\begin{align}\label{xsin}
\sup_{0\leq s\leq t}|X_{s}^{i,N}|^{\bar{p}}\leq\frac{\lambda}{1-\lambda}\parallel\xi\parallel^{\bar{p}}+ \frac{1}{(1-\lambda)^{{\bar{p}}}}\sup_{0\leq s\leq t}|X_{s}^{i,N}-D(X_{s-\tau}^{i,N})|^{\bar{p}}.
\end{align}
We now concentrate on the estimation of the second term. The application of the fractional It\^{o} formula leads to
\begin{align*}
&|X_{t}^{i,N}-D(X_{t-\tau}^{i,N})|^{\bar{p}}\\
\leq& |\xi(0)-D(\xi(-\tau))|^{\bar{p}}\\
&+\bar{p}\int_{0}^t|X_{s}^{i,N}-D(X_{s-\tau}^{i,N})|^{{\bar{p}}-2}\big\langle X_{s}^{i,N}-D(X_{s-\tau}),b\big(X_{s}^{i,N},X_{s-\tau}^{i,N},\mu_{s}^{X,N}\big)\big\rangle{\rm d}s\\
&+{\bar{p}}\int_{0}^t|X_{s}^{i,N}-D(X_{s-\tau}^{i,N})|^{\bar{p}-1}\|\sigma\big(\mu_{s}^{X,N}\big)\|{\rm d}B_{s}^H\\
&+{\bar{p}}({\bar{p}}-1)H(2H-1)\int_{0}^t|X_{s}^{i,N}-D(X_{s-\tau}^{i,N})|^{{\bar{p}}-2}\bigg(\int_{0}^s(s-r)^{2H-2}\|\sigma(\mu_{r}^{X,N})\|^2{\rm d}r\bigg){\rm d}s\\
=:&|\xi(0)-D(\xi(-\tau))|^{\bar{p}}+I_{1}(t)+I_{2}(t)+I_{3}(t).
\end{align*}
By means of Assumptions \ref{zhonglixiangdetiaojian}-\ref{danbiantiaojian} and the Young's inequality, we conclude
\begin{align*}
&\mathbb{E}\bigg(\sup_{0\leq s\leq t}I_{1}(s)\bigg)\\
= &\mathbb{E}\int_{0}^{t} {\bar{p}}|X_{s}^{i,N}-D(X_{s-\tau}^{i,N})|^{{\bar{p}}-2}\big\langle X_{s}^{i,N}-D(X_{s-\tau}^{i,N}),b\big(X_{s}^{i,N},X_{s-\tau}^{i,N},\mu_{s}^{X,N}\big)\big\rangle{\rm d}s\\
\leq&C\mathbb{E}\int_{0}^{t}[|X_{s}^{i,N}-D(X_{s-\tau}^{i,N})|^{\bar{p}}+(1+|X_{s}^{i,N}|^2+|X_{s-\tau}^{i,N}|^2+\mathbb{W}_{2}^2(\mu_{s}^{X,N},\delta_{0}))^{{\bar{p}}/2}]{\rm d}s\\
\leq &C\mathbb{E}\int_{0}^{t}[(1+|X_{s}^{i,N}|^{\bar{p}}+|X_{s-\tau}^{i,N}|^{\bar{p}}+\mathbb{W}_{2}^{\bar{p}}(\mu_{s}^{X,N},\delta_{0})]{\rm d}s.
\end{align*}
Utilizing Assumption \ref{sigmamanzudechushitiaojian}, the H\"{o}lder inequality, the Young's inequality and the similar way as achieving \eqref{J3}, it produces
\begin{align*}
\mathbb{E}\bigg(\sup_{0\leq s\leq t}I_{2}(s)\bigg)=&{\bar{p}}\mathbb{E}\int_{0}^t|X_{s}^{i,N}-D(X_{s-\tau}^{i,N})|^{\bar{p}-1}\|\sigma\big(\mu_{s}^{X,N}\big)\|{\rm d}B_{s}^H\\
\leq &C\epsilon \mathbb{E}\left(\sup_{0\leq s \leq t} |X_{s}^{i,N}|^{{\bar{p}}} \right)+ C_\epsilon \mathbb{E}\int_{0}^{t}|X_{s-\tau}^{i,N}|^{\bar{p}}{\rm d}s+C\mathbb{E}\int_{0}^{t}\mathbb{W}_{2}^{\bar{p}}(\mu_{s}^{X,N},\delta_{0}){\rm d}s+C,
\end{align*}
where $0<\epsilon<1$. Again by Assumption \ref{sigmamanzudechushitiaojian}, the Young's inequality and the Fubini theorem, it results in
\begin{align*}
&\mathbb{E}\bigg(\sup_{0\leq s\leq t}I_{3}(s)\bigg)\\
=&{\bar{p}}({\bar{p}}-1)H(2H-1)\mathbb{E}\int_{0}^t|X_{s}^{i,N}-D(X_{s-\tau}^{i,N})|^{{\bar{p}}-2}\bigg(\int_{0}^s(s-r)^{2H-2}\|\sigma(\mu_{r}^{X,N})\|^2{\rm d}r\bigg){\rm d}s\\
\leq &C\mathbb{E}\int_{0}^{t}|X_{s}^{i,N}|^{\bar{p}}{\rm d}s+C\mathbb{E}\int_{0}^{t}|X_{s-\tau}^{i,N}|^{\bar{p}}{\rm d}s+C\int_{0}^t(t-s)^{2H-1}\mathbb{E}|X_{s}^{i,N}|^{\bar{p}}{\rm d}s,
\end{align*}
where we have used the fact that
\begin{align*}
 \mathbb{W}_{2}(\mu_{s}^{X,N},\delta_{0})=\mathcal{W}_{2}(\mu_{s}^{X,N})=\bigg(\frac{1}{N}\sum_{i=1}^N|X_{s}^{i,N}|^{2}\bigg)^{1/2}.
\end{align*}
Since all $X_{s}^{i,N}$ are identically distributed, the Minkowski inequality and Lemma \ref{measuredistance} lead to
\begin{align*}
&\mathbb{E}\bigg(\frac{1}{N}\sum_{i=1}^N|X_{s}^{i,N}|^{2}\bigg)^{{\bar{p}}/2}
\leq \bigg(\frac{1}{N}\sum_{i=1}^N\parallel |X_{s}^{i,N}|^{2}\parallel_{L^{\bar{p}/2}}\bigg)^{\bar{p}/2}
=\bigg(\frac{1}{N} \bigg(\sum_{i=1}^N(\mathbb{E}|X_{s}^{i,N}|^{\bar{p}})^{2/\bar{p}}\bigg)^{\bar{p}/2}=\mathbb{E}|X_{s}^{i,N}|^{\bar{p}}.
\end{align*}
To sum up, we obtain
\begin{align*}
  \mathbb{E}\bigg(\sup_{0\leq s\leq t}|X_{s}^{i,N}-D(X_{s-\tau}^{i,N})|^{\bar{p}}\bigg)
  \leq&C+C\epsilon \mathbb{E}\left(\sup_{0\leq s \leq t} |X_{s}^{i,N}|^{{\bar{p}}} \right)+C\int_{0}^t\mathbb{E}\bigg(\sup_{0\leq s\leq r}|X_{s}^{i,N}|^{\bar{p}}\bigg){\rm d}r\\
  &+C\int_{0}^t(t-r)^{2H-1}\mathbb{E}\bigg(\sup_{0\leq s\leq r}|X_{s}^{i,N}|^{\bar{p}}\bigg){\rm d}r.
\end{align*}
Consequently, choosing some $\epsilon$ small enough to ensure $C\epsilon < \frac{1}{2}$ and using \eqref{xsin} to get
\begin{align}\label{sss}
  \mathbb{E}\bigg(\sup_{0\leq s\leq t}|X_{s}^{i,N}|^{\bar{p}}\bigg)
  \leq &C+C\int_{0}^t\mathbb{E}\bigg(\sup_{0\leq s\leq r}|X_{s}^{i,N}|^{\bar{p}}\bigg){\rm d}r+C\int_{0}^t(t-r)^{2H-1}\mathbb{E}\bigg(\sup_{0\leq s\leq r}|X_{s}^{i,N}|^{\bar{p}}\bigg){\rm d}r.
\end{align}
Since the singular kernel $(t-s)^{2H-1}$ is integrable on the finite interval $[0,T]$, we can bound
\begin{align*}
\int_{0}^t(t-s)^{2H-1}{\rm d}s\leq C.
\end{align*}
Rewriting \eqref{sss} by ignoring the regular kernel as
\begin{align*}
\mathbb{E}\bigg(\sup_{0\leq s\leq t}|X_{s}^{i,N}|^{\bar{p}}\bigg)\leq C+C\int_{0}^t(t-r)^{2H-1}\mathbb{E}\bigg(\sup_{0\leq s\leq r}|X_{s}^{i,N}|^{\bar{p}}\bigg){\rm d}r,
\end{align*}
then, Lemma \ref{Hlemma} implies
\begin{align*}
\mathbb{E}\bigg(\sup_{0\leq s\leq t}|X_{s}^{i,N}|^{\bar{p}}\bigg)\leq CE_{2H,1}((\Gamma(2H))^{1/2H}t).
\end{align*}
The Mittag-Leffler function, defined by the power series $E_{2H,1}((\Gamma(2H))^{1/2H}t)$, is an entire function. It is a well-established result that this series converges uniformly on any compact subset of the complex plane, and therefore on any finite interval $[0,T]$ (see, e.g.,\cite{HMS11}), which means $\mathbb{E}\bigg(\sup_{t\in[-\tau,T]}|X_{t}^{i,N}|^{\bar{p}}\bigg)$ is bounded. Finally, the boundness of $\mathbb{E}\bigg(\sup_{t\in[-\tau,T]}|X_{t}^{i}|^{\bar{p}}\bigg)$ can be shown similarly. This completes the proof.
\end{proof}

\begin{lem}\label{propagationofchaosholdfinitemoment}
{\rm Let Assumptions \ref{chushizhixidetiaojian}-\ref{sigmamanzudechushitiaojian} hold. Fix any $\bar{p}>4$ in Lemma \ref{propagationyoujie}. Then, for $p\in[2,\frac{\bar{p}}{2})$, we have}
\begin{align*}
 \mathbb{E}\bigg(\sup_{t\in[0,T]}|X_{t}^i-X_{t}^{i,N}|^p\bigg)\leq C\times\left\{\begin{array}{ll}
  N^{-\frac{1}{2}},\ \ p>\frac{d}{2},\\
     N^{-\frac{1}{2}}\log(1+N),\ p=\frac{d}{2},\\
     N^{-\frac{p}{d}},\ p\in[2,\frac{d}{2}),
\end{array}
\right.
\end{align*}
{\rm where $C$ is a positive constant depending on $p,T,H,L$ but independent of $N$ and $d$.}
\end{lem}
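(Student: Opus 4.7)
The plan is to run the neutral-term trick used repeatedly in the paper and to reduce the problem to controlling the Wasserstein gap between the law of $X_t^i$ and the empirical measure $\mu_t^{X,N}$. I would set $Z_t^i := X_t^i - X_t^{i,N}$ and $\Pi_t^i := Z_t^i - \bigl(D(X_{t-\tau}^i) - D(X_{t-\tau}^{i,N})\bigr)$. By Lemma~\ref{Maodebudengshi} applied with $a = \lambda$ I obtain
\[
|Z_t^i|^p \leq \lambda \,|Z_{t-\tau}^i|^p + \frac{1}{(1-\lambda)^{p-1}}\,|\Pi_t^i|^p,
\]
so that after taking suprema and expectations, the delayed term on the right-hand side carries a coefficient strictly less than $1$, opening the door to a step-by-step argument over the intervals $[0,\tau],[0,2\tau],\dots,[0,T]$.

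Next I would apply the fractional It\^o formula of Remark~\ref{Itoformula} to $|\Pi_t^i|^p$. The drift integrand is controlled via the monotonicity half of Assumption~\ref{danbiantiaojian}, producing $|Z_s^i|^2 + |Z_{s-\tau}^i|^2 + \mathbb{W}_q(\mathcal{L}_{X_s^i},\mu_s^{X,N})^2$ up to Young-type rearrangements. The fractional stochastic integral and the second-order Malliavin term are handled by the same scheme as in \eqref{fbmjifen}, combined with the Wasserstein Lipschitz bound on $\sigma$ from Assumption~\ref{sigmamanzudechushitiaojian}; the factor $s^{2H-1}$ that arises in the It\^o correction is integrable on $[0,T]$ and contributes only a constant. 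The uniform moment estimate of Lemma~\ref{propagationyoujie}, available under the gap $\bar p > 2p$, keeps all higher-order quantities finite uniformly in $N$.

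The decisive step is the triangle decomposition
\[
\mathbb{W}_q(\mathcal{L}_{X_s^i},\mu_s^{X,N}) \leq \mathbb{W}_q\bigl(\mathcal{L}_{X_s^i},\bar\mu_s^{X,N}\bigr) + \mathbb{W}_q\bigl(\bar\mu_s^{X,N},\mu_s^{X,N}\bigr),
\]
where $\bar\mu_s^{X,N} := \frac{1}{N}\sum_{j=1}^N \delta_{X_s^j}$ is the empirical measure built from the non-interacting particles. For the second summand, Lemma~\ref{measuredistance} yields the bound $\bigl(\tfrac{1}{N}\sum_{j=1}^N |Z_s^j|^q\bigr)^{1/q}$; raising to the $p$-th power and invoking the exchangeability $\mathbb{E}|Z_s^j|^p = \mathbb{E}|Z_s^i|^p$ recycles this contribution back into the Gronwall loop. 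For the first summand I would invoke the Fournier--Guillin empirical-measure convergence rates, whose finite-moment hypothesis is supplied by Lemma~\ref{propagationyoujie}; these rates produce precisely the case split $N^{-1/2}$, $N^{-1/2}\log(1+N)$, $N^{-p/d}$ according to whether $p > d/2$, $p = d/2$, or $2 \leq p < d/2$. An iterative Gronwall over $[0,\tau], [0,2\tau], \dots, [0,T]$, exploiting $\lambda^p < 1$ at each step, then closes the estimate.

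The main obstacle I anticipate is the simultaneous management of the three exponents $p$, $q$, $\bar p$: they must remain compatible at every use of H\"older, at the moment bound of Lemma~\ref{propagationyoujie}, and at the Fournier--Guillin hypothesis, while the delay structure forces successive Gronwall iterations whose constants must not degrade the rate in $N$. Ensuring that the fractional It\^o second-order term (with kernel $\phi$) and the factor $\lambda^p$ from the neutral term both stay subordinate to the empirical-measure rate, uniformly across the iterates, is the step that will require the most bookkeeping.
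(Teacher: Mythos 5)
Your proposal follows essentially the same route as the paper's proof: the same neutral-term reduction via Lemma \ref{Maodebudengshi} (your $\Pi_t^i$ is the paper's $\Theta_t^i$), the fractional It\^{o} formula on $|\Pi_t^i|^p$ with the one-sided condition of Assumption \ref{danbiantiaojian} and the \eqref{fbmjifen}-type treatment of the stochastic and Malliavin terms, the same triangle decomposition of $\mathbb{W}_q(\mathcal{L}_{X_s^i},\mu_s^{X,N})$ through the empirical measure of the non-interacting particles, Lemma \ref{measuredistance} plus exchangeability to recycle the interaction term into Gronwall, and the Fournier--Guillin rates for the remaining term. The only cosmetic difference is that you iterate Gronwall over the delay intervals whereas the paper absorbs the delayed difference directly into the running supremum (the two processes share initial data on $[-\tau,0]$); both are sound.
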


\begin{proof}
For $i=1,2,\cdots,N$, by \eqref{noninteractingparticlesystem} and \eqref{interactingparticlesystem}, set
\begin{align*}
\Upsilon_{t}^i=X_{t}^i-D(X_{t-\tau}^i)-X_{t}^{i,N}+D(X_{t-\tau}^{i,N}).
\end{align*}
By using the fractional It\^{o} formula to $|\Upsilon_{t}^i|^p$, we can show
\begin{align*}
 &|\Upsilon_{t}^i|^p-|\Upsilon_{0}^i|^p\\
 \leq &p\int_{0}^t|\Upsilon_{s}^i|^{p-2}\langle \Upsilon_{s}^i,b(X_{s}^i,X_{s-\tau}^i,\mathcal{L}_{X_{s}^i})-b(X_{s}^{i,N},X_{s-\tau}^{i,N},\mu_{s}^{X,N})\rangle{\rm d}s\\
 &+p\int_{0}^t|\Upsilon_{s}^i|^{p-2}\langle\Upsilon_{s}^i,\sigma(\mathcal{L}_{X_{s}^i})-\sigma(\mu_{s}^{X,N})\rangle{\rm d}B_{s}^{H,i}\\
 &+p(p-1)H(2H-1)\int_{0}^t|\Upsilon_{s}^i|^{p-2}\bigg(\int_{0}^s(s-r)^{2H-2}\|\sigma(\mathcal{L}_{X_{r}^i})-\sigma(\mu_{r}^{X,N})\|^2{\rm d}r\bigg){\rm d}s\\
 =:&H_{1}(t)+H_{2}(t)+H_{3}(t).
\end{align*}
Assumptions \ref{zhonglixiangdetiaojian}, \ref{danbiantiaojian} and the Young's inequality lead to
\begin{align*}
 &\mathbb{E}\bigg(\sup_{0\leq s\leq t}H_{1}(s)\bigg)
 = p\mathbb{E}\int_{0}^t|\Upsilon_{s}^i|^{p-2}\langle \Upsilon_{s}^i,b(X_{s}^i,X_{s-\tau}^i,\mathcal{L}_{X_{s}^i})-b(X_{s}^{i,N},X_{s-\tau}^{i,N},\mu_{s}^{X,N})\rangle{\rm d}s\\
 \leq &C\mathbb{E}\int_{0}^t|X_{t}^i-X_{t}^{i,N}|^{p}{\rm d}s+C\mathbb{E}\int_{0}^t|X_{s-\tau}^i-X_{s-\tau}^{i,N}|^p{\rm d}s+C\mathbb{E}\int_{0}^t\mathbb{W}_{2}^p(\mathcal{L}_{X_{s}^i},\mu_{s}^{X,N}){\rm d}s.
\end{align*}
As the same way of achieving \eqref{J3}, we use Assumption \ref{sigmamanzudechushitiaojian}, the Young's inequality and the H\"{o}lder inequality to get
\begin{align*}
\mathbb{E}\bigg(\sup_{0\leq s\leq t}H_{2}(s)\bigg)
=&p\mathbb{E}\int_{0}^t|\Upsilon_{s}^i|^{p-2}\langle\Upsilon_{s}^i,\sigma(\mathcal{L}_{X_{s}^i})-\sigma(\mu_{s}^{X,N})\rangle{\rm d}B_{s}^{H,i}\\
\leq &C\epsilon \mathbb{E}\left(\sup_{0\leq s \leq t} |\Upsilon_{s}^i|^{{p}} \right)+C\mathbb{E}\int_{0}^t\mathbb{W}_{2}^p(\mathcal{L}_{X_{s}^i},\mu_{s}^{X,N}){\rm d}s.
\end{align*}
Further, Assumptions \ref{zhonglixiangdetiaojian}, \ref{sigmamanzudechushitiaojian}, Remark \ref{Itoformula}, the Young's inequality and Fubini theorem give
\begin{align*}
&\mathbb{E}\bigg(\sup_{0\leq s\leq t}H_{3}(s)\bigg)\\
=&p(p-1)H(2H-1)\int_{0}^t|\Upsilon_{s}^i|^{p-2}\bigg(\int_{0}^s(s-r)^{2H-2}\|\sigma(\mathcal{L}_{X_{r}^i})-\sigma(\mu_{r}^{X,N})\|^2{\rm d}r\bigg){\rm d}s\\
\leq &C\mathbb{E}\int_{0}^{t}|\Upsilon_{s}^i|^{p}{\rm d}s+C\mathbb{E}\int_{0}^t(t-s)^{2H-1}\mathbb{W}_{2}^{p}\big(\mathcal{L}_{X_{s}^i},\mu_{s}^{X,N}\big){\rm d}s\\
\le&C\mathbb{E}\int_{0}^t|X_{t}^i-X_{t}^{i,N}|^{p}{\rm d}s+C\mathbb{E}\int_{0}^t|X_{s-\tau}^i-X_{s-\tau}^{i,N}|^p{\rm d}s+C\mathbb{E}\int_{0}^t(t-s)^{2H-1}\mathbb{W}_{2}^p(\mathcal{L}_{X_{s}^i},\mu_{s}^{X,N}){\rm d}s.
\end{align*}
Noting that
\begin{align*}
\mathbb{W}_{2}(\mathcal{L}_{X_{s}^i},\mu_{s}^{X,N})\leq \mathbb{W}_{2}(\mathcal{L}_{X_{s}^i},\mu_{s}^{X})+\mathbb{W}_{2}(\mu_{s}^{X},\mu_{s}^{X,N}),
\end{align*}
then, for $p\ge2$, Lemma \ref{measuredistance} leads to
\begin{align*}
 \mathbb{E}\mathbb{W}_{2}^p(\mathcal{L}_{X_{s}^i},\mu_{s}^{X,N})= &\mathbb{E}[\mathbb{W}_{2}^{2}(\mathcal{L}_{X_{s}^i},\mu_{s}^{X,N})]^{p/{2}}\\
\leq &\mathbb{E}\bigg(2\mathbb{W}_{2}^{2}(\mathcal{L}_{X_{s}^i},\mu_{s}^{X})+2\mathbb{W}_{2}^{2}(\mu_{s}^{X},\mu_{s}^{X,N})\bigg)^{p/2}\\
\leq &C\mathbb{E}\mathbb{W}_{2}^p(\mathcal{L}_{X_{s}^i},\mu_{s}^{X})+C\mathbb{E}\mathbb{W}_{2}^p(\mu_{s}^{X},\mu_{s}^{X,N})\\
\leq &C\mathbb{E}\bigg(\frac{1}{N}\sum_{i=1}^N|X_{s}^i-X_{s}^{i,N}|^{2}\bigg)^{p/2}+C\mathbb{E}\mathbb{W}_{2}^p(\mathcal{L}_{X_{s}^i},\mu_{s}^{X})\\
\leq &C\mathbb{E}|X_{s}^i-X_{s}^{i,N}|^p+C\mathbb{E}\mathbb{W}_{2}^p(\mathcal{L}_{X_{s}^i},\mu_{s}^{X}).
\end{align*}
Putting $H_1(t)$-$H_3(t)$ together, we conclude that
\begin{align*}
 \mathbb{E}\bigg(\sup_{0\leq s\leq t}|\Upsilon_{s}^i|^p\bigg) \leq&C\mathbb{E}\int_{0}^t|X_{s}^i-X_{s}^{i,N}|^p{\rm d}s+C\mathbb{E}\int_{0}^t|X_{s-\tau}^i-X_{s-\tau}^{i,N}|^p{\rm d}s\\
 &+C\mathbb{E}\int_{0}^t\mathbb{W}_{2}^p(\mathcal{L}_{X_{s}^i},\mu_{s}^{X,N}){\rm d}s+C\mathbb{E}\int_{0}^t(t-s)^{2H-1}\mathbb{W}_{2}^p(\mathcal{L}_{X_{s}^i},\mu_{s}^{X,N}){\rm d}s\\
 \leq& \int_{0}^t\mathbb{E}\bigg(\sup_{0\leq s\leq r}|X_{s}^i-X_{s}^{i,N}|^p\bigg){\rm d}r+C\int_{0}^t(t-r)^{2H-1}\mathbb{E}\bigg(\sup_{0\leq s\leq r}|X_{s}^i-X_{s}^{i,N}|^p\bigg){\rm d}r\\
 &+C\int_{0}^t(t-r)^{2H-1}\mathbb{E}\mathbb{W}_{2}^p(\mathcal{L}_{X_{r}^i},\mu_{r}^{X}){\rm d}r.
\end{align*}
According to Lemma \ref{Maodebudengshi}, it derives
\begin{align*}
|X_{s}^i-X_{s}^{i,N}|^p
\leq \lambda|X_{s-\tau}^i-X_{s-\tau}^{i,N}|^p+\frac{1}{(1-\lambda)^{p-1}}|\Upsilon_{s}^i|^p.
\end{align*}
Noting that $0<\lambda<1$, then
\begin{align*}
\mathbb{E}\bigg(\sup_{0\leq s\leq t}|X_{s}^i-X_{s}^{i,N}|^p\bigg)\leq& C\mathbb{E}\bigg(\sup_{0\leq s\leq t} |\Upsilon_{s}^i|^p\bigg).
\end{align*}
That is,
\begin{align*}
&\mathbb{E}\bigg(\sup_{0\leq s\leq t}|X_{s}^i-X_{s}^{i,N}|^p\bigg)\\
\leq&\int_{0}^t\mathbb{E}\bigg(\sup_{0\leq s\leq r}|X_{s}^i-X_{s}^{i,N}|^p\bigg){\rm d}r+C\int_{0}^t(t-r)^{2H-1}\mathbb{E}\bigg(\sup_{0\leq s\leq r}|X_{s}^i-X_{s}^{i,N}|^p\bigg){\rm d}r\\
 &+C\int_{0}^t(t-r)^{2H-1}\mathbb{E}\mathbb{W}_{2}^p(\mathcal{L}_{X_{r}^i},\mu_{r}^{X}){\rm d}r.
\end{align*}
Since $\mathbb{W}_{2}^p(\mathcal{L}_{X_{s}^i},\mu_{s}^{X})$ is controlled by the Wasserstein distance estimate(\cite{FG15}, Theorem 1), for $2\le p<\bar{p}/2$, it follows
\begin{align*}
\mathbb{E}\mathbb{W}_{2}^p(\mathcal{L}_{X_{s}^i},\mu_{s}^{X}) \leq &C(\mathbb{E}|X_{s}^i|^{\bar{p}})^{p/{\bar{p}}}\times\left\{\begin{array}{ll}
      N^{-\frac{1}{2}},\ \ p>\frac{d}{2},\\
     N^{-\frac{1}{2}}\log(1+N),\ p=\frac{d}{2},\\
     N^{-\frac{p}{d}},\ p\in[2,\frac{d}{2}).
 \end{array}
 \right.
\end{align*}
Then, similar to \eqref{sss}, by Lemmas \ref{Hlemma} and \ref{propagationyoujie}, the desired result can be obtained.
\end{proof}

\section{The Tamed Theta EM Scheme}
In this section, we consider the tamed theta EM scheme to approximate solution of \eqref{interactingparticlesystem}. Since the drift coefficient exhibits super-linear growth, we define
\begin{align}\label{driftcoefficienttamed}
  b_{\Delta}(x,y,\mu)=\frac{b(x,y,\mu)}{1+\Delta^{\alpha}|b(x,y,\mu)|},
\end{align}
for $x,y\in\mathbb{R}^d$, $\mu\in\mathcal{P}_{2}(\mathbb{R}^d)$ and $\alpha\in(0,1/2]$. Assume that for any time $T>0$, there exist positive constants $M, m\in \mathbb{N}$ such that $\Delta=\frac{\tau}{m}=\frac{T}{M}$, where $\Delta\in (0,1)$ is the step-size. Consequently, for the time grid $t_{k}=k\Delta$, it holds that $t_{k+1}-\tau =t_{k+1-m}$. For $k=-m,\cdots, 0,$ we set $Y_{t_k}^{i,N}=\xi^{i}(t_{k})$. For $k=0,1,\cdots,M$, we now define the discrete tamed theta EM scheme for \eqref{interactingparticlesystem} as follows,
\begin{align}\label{Eulerscheme}
Y_{t_{k+1}}^{i,N}-D(Y_{t_{k+1-m}}^{i,N})=&Y_{t_{k}}^{i,N}-D(Y_{t_{k-m}}^{i,N})+\theta b_{\Delta}(Y_{t_{k+1}}^{i,N},Y_{t_{k+1-m}}^{i,N},\mu_{t_{k+1}}^{Y,N})\Delta\\
&+(1-\theta)b_{\Delta}(Y_{t_{k}}^{i,N},Y_{t_{k-m}}^{i,N},\mu_{t_{k}}^{Y,N})\Delta+\sigma(\mu_{t_{k}}^{Y,N})\Delta B_{t_{k}}^{H,i},\notag
\end{align}
where the empirical measures $\mu_{(\cdot)}^{Y,N}:=\frac{1}{N}\sum\limits_{i=1}^{N}\delta_{Y_{(\cdot)}^{i,N}}$ and $\Delta B_{t_{k}}^{H,i}=B_{t_{k+1}}^{H,i}-B_{t_{k}}^{H,i}$. Let
\begin{align}\label{barYdelta}
\bar{Y}_{\Delta}^{i,N}(t)=\left\{\begin{array}{ll}
\xi^{i}(t),\ \ -\tau\leq t\leq 0 , \\
\sum\limits_{k=0}^{[T/\Delta]}Y_{t_{k}}^{i,N} {\rm I}_{[k\Delta,(k+1)\Delta)}(t),\ \ t\geq 0,
\end{array}
\right.
\end{align}
and
\begin{align*}
\bar{Y}_{\Delta+}^{i,N}(t)=\left\{\begin{array}{ll}
\xi^{i}(t),\ \ -\tau\leq t\leq 0 , \\
\sum\limits_{k=0}^{[T/\Delta]}Y_{t_{k+1}}^{i,N} {\rm I}_{[k\Delta,(k+1)\Delta)}(t),\ \ t\geq 0,
\end{array}
\right.
\end{align*}
and the continuous tamed theta EM solution writes as
\begin{align}\label{continuoustimetamedthetascheme}
 Y_{\Delta}^{i,N}(t)=&D(\bar{Y}_{\Delta}^{i,N}(t))+\xi^{i}(0)-D(\xi^{i}(-\tau))+\theta\int_{0}^tb_{\Delta}(\bar{Y}_{\Delta+}^{i,N}(s),\bar{Y}_{\Delta+}^{i,N}(s-\tau),\bar{\mu}^{Y_{+},N}(s)){\rm d}s\\
 &+(1-\theta)\int_{0}^tb_{\Delta}(\bar{Y}_{\Delta}^{i,N}(s),\bar{Y}_{\Delta}^{i,N}(s-\tau),\bar{\mu}^{Y,N}(s)){\rm d}s+\int_{0}^t\sigma(\bar{\mu}^{Y,N}(s)){\rm d}B_{s}^{H,i}, \ \ t>0,\notag
\end{align}
where $\bar{\mu}^{Y,N}(\cdot):=\frac{1}{N}\sum\limits_{i=1}^N\delta_{\bar{Y}_{\Delta}^{i,N}}(\cdot)$, $\bar{\mu}^{Y_{+},N}(\cdot):=\frac{1}{N}\sum\limits_{i=1}^N\delta_{\bar{Y}_{\Delta+}^{i,N}}(\cdot)$ and $Y_{\Delta}^{i,N}(t)=\xi^{i}(t)$ for $t\in[-\tau,0]$. However, this $Y_{\Delta}^{i,N}(t)$ is not $\mathcal{F}_{t}$-adapted, it does not meet the fundamental requirement in the It\^{o} stochastic analysis. To avoid calculus, we use the discrete split-step theta scheme (see \cite{TY18}) described as follows: For $k=-m,\cdots,0$, set $Z_{t_{k}}^{i,N}=\xi^{i}(k\Delta)$. For $k=0,1,\cdots,M-1$, rewrite \eqref{Eulerscheme} as
\begin{align}\label{reformedtamedformula}
 \left\{\begin{array}{ll}
    Y_{t_{k}}^{i,N}=D(Y_{t_{k-m}}^{i,N})+Z_{t_{k}}^{i,N}-D(Z_{t_{k-m}}^{i,N})+\theta b_{\Delta}(Y_{t_{k}}^{i,N},Y_{t_{k-m}}^{i,N},\mu_{t_{k}}^{Y,N})\Delta, \\
  Z_{t_{k+1}}^{i,N}=D(Z_{t_{k+1-m}}^{i,N})+Z_{t_{k}}^{i,N}-D(Z_{t_{k-m}}^{i,N})+b_{\Delta}(Y_{t_{k}}^{i,N},Y_{t_{k-m}}^{i,N},\mu_{t_{k}}^{Y,N})\Delta+\sigma(\mu_{t_{k}}^{Y,N})\Delta B_{t_{k}}^{H,i},
 \end{array}
 \right.
\end{align}
This scheme can also be rewritten as
\begin{align*}
  &Z_{t_{k+1}}^{i,N}-D(Z_{t_{k+1-m}}^{i,N})\\
  =&Z_{t_{0}}^{i,N}-D(Z_{t_{-m}}^{i,N})+\sum_{j=0}^kb_{\Delta}(Y_{t_{j}}^{i,N},Y_{t_{j-m}}^{i,N},\mu_{t_{j}}^{Y,N})\Delta+\sum_{j=0}^k\sigma(\mu_{t_{j}}^{Y,N})\Delta B_{t_{j}}^{H,i} \\
 =&\xi^{i}(0)-D(\xi^i(-\tau))-\theta b_{\Delta}(\xi^i(0),\xi^i(-\tau),\delta_{\xi(0)})\Delta\\
 &+\sum_{j=0}^kb_{\Delta}(Y_{t_{j}}^{i,N},Y_{t_{j-m}}^{i,N},\mu_{t_{j}}^{Y,N})\Delta+\sum_{j=0}^k\sigma(\mu_{t_{j}}^{Y,N})\Delta B_{t_{j}}^{H,i}.
\end{align*}
Similarly, we define the corresponding continuous-time split-step tamed theta EM solution $Z_{\Delta}^{i,N}(t)$ as follows: For any $t\in[-\tau,0)$, $Z_{\Delta}^{i,N}(t)=\xi^i(t)$, $Z_{\Delta}^{i,N}(0)=\xi^i(0)-\theta b_{\Delta}(\xi^i(0)$, $\xi^i(-\tau),\delta_{\xi(0)})\Delta$. For any $t\in[0,T]$,
\begin{align}\label{Zdeltatformula}
  {\rm d}[Z_{\Delta}^{i,N}(t)-D(Z_{\Delta}^{i,N}(t-\tau))]=b_{\Delta}(\bar{Y}_{\Delta}^{i,N}(t),\bar{Y}_{\Delta}^{i,N}(t-\tau),\bar{\mu}^{Y,N}(t)){\rm d}t+\sigma(\bar{\mu}^{Y,N}(t)){\rm d}B_{t}^{H,i},
\end{align}
where $\bar{Y}_{\Delta}^{i,N}(t)$ is defined by \eqref{barYdelta}. Combining \eqref{continuoustimetamedthetascheme}, \eqref{reformedtamedformula} and \eqref{Zdeltatformula}, we see
\begin{align*}
  Y_{\Delta}^{i,N}(t)-D(Y_{\Delta}^{i,N}(t-\tau))-\theta b_{\Delta}(Y_{\Delta}^{i,N}(t),Y_{\Delta}^{i,N}(t-\tau),\mu^{Y,N}(t))\Delta=Z_{\Delta}^{i,N}(t)-D(Z_{\Delta}^{i,N}(t-\tau)).
\end{align*}
Let $\tilde{Y}_{\Delta}^{i,N}(t)=Y_{\Delta}^{i,N}(t)-D(Y_{\Delta}^{i,N}(t-\tau))-\theta b_{\Delta}(Y_{\Delta}^{i,N}(t),Y_{\Delta}^{i,N}(t-\tau),\mu^{Y,N}(t))\Delta$. We can rewrite \eqref{Zdeltatformula} as
\begin{align}\label{tildeYdelta}
\tilde{Y}_{\Delta}^{i,N}(t)=\tilde{Y}_{\Delta}^{i,N}(0)+\int_{0}^tb_{\Delta}(\bar{Y}_{\Delta}^{i,N}(s),\bar{Y}_{\Delta}^{i,N}(s-\tau),\bar{\mu}^{Y,N}(s)){\rm d}s+\int_{0}^t\sigma(\bar{\mu}^{Y,N}(s)){\rm d}B_{s}^{H,i}.
\end{align}
It is clear that $\tilde{Y}_{\Delta}^{i,N}$ coincide with $\bar{Y}_{\Delta}^{i,N}(t)-D(\bar{Y}_{\Delta}^{i,N}(t-\tau))-\theta b_{\Delta}(\bar{Y}_{\Delta}^{i,N}(t),\bar{Y}_{\Delta}^{i,N}(t-\tau),\bar{\mu}^{Y,N}(t))$ at grid points $t=k\Delta$, $k=0,1,\cdots,M-1$, this also means that the continuous-time tamed theta EM solution $Y_{\Delta}^{i,N}(t)$ and the discrete-time tamed theta EM solution $\bar{Y}_{\Delta}^{i,N}(t)$ take the same values at grid points $t=k\Delta$, $k=0,1,\cdots,M-1$.

\begin{lem}\label{bdeltayoujie}
{\rm There exists a positive constant $K_6\geq 1$ such that
\begin{align*}
  |b_{\Delta}(x,y,\mu)|\leq \min\{K_6\Delta^{-\alpha}(1+|x|+|y|+\mathbb{W}_{2}(\mu,\delta_{0})), |b(x,y,\mu)|\}
\end{align*}
for $x,y\in \mathbb{R}^d$ and $\mu\in\mathcal{P}_{2}(\mathbb{R}^d)$.}
\end{lem}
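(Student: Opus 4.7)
The lemma is essentially a definitional check on the tamed coefficient, so the plan is short. First I would establish the easier half of the minimum: the bound $|b_{\Delta}(x,y,\mu)|\leq |b(x,y,\mu)|$ is read off immediately from the definition \eqref{driftcoefficienttamed}, because the denominator $1+\Delta^{\alpha}|b(x,y,\mu)|$ is at least $1$.

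Next, I would obtain the $\Delta^{-\alpha}$-type bound by a two-line case split. If $b(x,y,\mu)=0$ there is nothing to prove. Otherwise, dropping the $1$ from the denominator of \eqref{driftcoefficienttamed} yields
\begin{align*}
|b_{\Delta}(x,y,\mu)|=\frac{|b(x,y,\mu)|}{1+\Delta^{\alpha}|b(x,y,\mu)|}\leq \frac{|b(x,y,\mu)|}{\Delta^{\alpha}|b(x,y,\mu)|}=\Delta^{-\alpha}.
\end{align*}
Since $1\leq 1+|x|+|y|+\mathbb{W}_{q}(\mu,\delta_{0})$, this already gives
\begin{align*}
|b_{\Delta}(x,y,\mu)|\leq \Delta^{-\alpha}\bigl(1+|x|+|y|+\mathbb{W}_{q}(\mu,\delta_{0})\bigr),
\end{align*}
with constant $K_{7}=1$; enlarging $K_{7}$ to any value $\geq 1$ preserves the inequality, which is why the statement is formulated with $K_{7}\geq 1$.

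Combining the two estimates yields the claimed minimum. There is no real obstacle here: the argument is purely algebraic and does not invoke Assumption \ref{danbiantiaojian} or Remark \ref{remarkone}. The only small point worth emphasizing is that the bound is linear in $|x|,|y|,\mathbb{W}_{q}(\mu,\delta_{0})$ despite the super-linear (order $l+1$) growth of $b$ itself, which is precisely the point of the taming device and will be the key input when later deriving moment bounds for the scheme \eqref{Eulerscheme}.
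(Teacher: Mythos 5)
Your proposal is correct and matches the paper's own argument: both halves of the minimum are obtained exactly as in the paper, the first by noting the denominator in \eqref{driftcoefficienttamed} is at least $1$, and the second by the elementary bound $|b_{\Delta}(x,y,\mu)|\leq \Delta^{-\alpha}$ (the paper writes the fraction as $\Delta^{-\alpha}|b|/(\Delta^{-\alpha}+|b|)$, which is the same computation as your dropping the $1$) followed by $1\leq 1+|x|+|y|+\mathbb{W}_{q}(\mu,\delta_{0})$. Nothing further is needed.
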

\begin{proof}
 For $\forall x,y>0$, by \eqref{driftcoefficienttamed}, it is easy to see that $|b_{\Delta}(x,y,\mu)|\leq |b(x,y,\mu)|$. On the other hand, we immediately get
 \begin{align*}
  |b_{\Delta}(x,y,\mu)|=\Delta^{-\alpha}\frac{|b(x,y,\mu)|}{\Delta^{-\alpha}+|b(x,y,\mu)|}\leq \Delta^{-\alpha}\leq K_6\Delta^{-\alpha}(1+|x|+|y|+\mathbb{W}_{2}(\mu,\delta_{0})).
 \end{align*}
\end{proof}

\begin{lem}\label{B2}
  {\rm Let Assumption \ref{danbiantiaojian} hold. One observes}
  \begin{align*}
   \langle x-D(y)-\bar{x}+D(\bar{y}),b_{\Delta}(x,y,\mu)-b_{\Delta}(\bar{x},\bar{y},\nu)\rangle\leq K_{7}(|x-\bar{x}|^2+|y-\bar{y}|^2+\mathbb{W}_{2}^2(\mu,\nu)),
  \end{align*}
  {\rm and }
  \begin{align*}
    |b(x,y,\mu)-b_{\Delta}(x,y,\mu)|^p\leq K_8\Delta^{\alpha p}(1+|x|^{2(l+1)p}+|y|^{2(l+1)p}+\mathbb{W}_{2}^{2p}(\mu,\delta_{0})),
  \end{align*}
  {\rm where $K_7, K_8$ are positive constants.}
\end{lem}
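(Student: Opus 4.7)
I would prove the two inequalities separately. The second one is the easier of the two and reduces to a direct calculation starting from the definition of $b_{\Delta}$:
\[
b(x,y,\mu)-b_{\Delta}(x,y,\mu) = b(x,y,\mu)\Bigl(1-\frac{1}{1+\Delta^{\alpha}|b(x,y,\mu)|}\Bigr) = \frac{\Delta^{\alpha}|b(x,y,\mu)|\,b(x,y,\mu)}{1+\Delta^{\alpha}|b(x,y,\mu)|}.
\]
Since $1+\Delta^{\alpha}|b|\geq 1$, this gives the pointwise bound $|b-b_{\Delta}|\leq \Delta^{\alpha}|b|^{2}$. Inserting the polynomial estimate $|b(x,y,\mu)|\leq C(1+|x|^{l+1}+|y|^{l+1}+\mathbb{W}_{q}(\mu,\delta_{0}))$ from Remark \ref{remarkone}, raising to the $p$-th power, and expanding by $(\sum_{i=1}^{4}a_{i})^{2p}\leq 4^{2p-1}\sum_{i=1}^{4}a_{i}^{2p}$ produces the stated inequality with $K_{8}=K_{8}(K_{3},K_{5},l,p)$.

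For the monotonicity statement I introduce the shorthand $a:=b(x,y,\mu)$, $\bar a:=b(\bar x,\bar y,\nu)$, $u:=x-D(y)-\bar x+D(\bar y)$, $c:=1+\Delta^{\alpha}|a|$ and $\bar c:=1+\Delta^{\alpha}|\bar a|$. Combining the two fractions over the common denominator $c\bar c$ and regrouping the numerator using $a|\bar a|-\bar a|a|=|a|(a-\bar a)+a(|\bar a|-|a|)$ yields the identity
\[
b_{\Delta}(x,y,\mu)-b_{\Delta}(\bar x,\bar y,\nu) = \frac{a-\bar a}{\bar c}+\frac{\Delta^{\alpha}\,a\,(|\bar a|-|a|)}{c\bar c}.
\]
Pairing with $u$, the first piece is bounded using Assumption \ref{danbiantiaojian} together with $\bar c\geq 1$: the right-hand side of the monotonicity condition is non-negative, so dividing by $\bar c$ can only decrease it. For the second piece I would apply the reverse triangle inequality $||\bar a|-|a||\leq |a-\bar a|$, the elementary bounds $\Delta^{\alpha}|a|/c\leq 1$ and $1/\bar c\leq 1$, and Cauchy--Schwarz to reduce its modulus to $|u|\,|a-\bar a|$. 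Then the Lipschitz-type bound on $b$ from Assumption \ref{danbiantiaojian}, the estimate $|u|\leq(1+\lambda)(|x-\bar x|+|y-\bar y|)$ from Assumption \ref{zhonglixiangdetiaojian}, and Young's inequality fold this contribution into the target quadratic form, with the constant absorbed into $K_{2}$ under the paper's generic-constant convention.

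The chief obstacle is the correction term in the monotonicity identity: a naive application of the growth bound on $b$ would leave polynomial weights $(1+|x|^{l}+|\bar x|^{l}+|y|^{l}+|\bar y|^{l})$ on the right-hand side, which must be neutralised by the factor $\Delta^{\alpha}|a|/c\leq 1$ coming from the taming denominator. This interplay between the superlinear numerator and the $(1+\Delta^{\alpha}|b|)^{-1}$ denominator is exactly what makes this particular taming compatible with the one-sided Lipschitz structure of $b$, and the step has to be executed carefully so that no spurious polynomial factors survive on the right-hand side.
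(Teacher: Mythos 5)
Your treatment of the second inequality is correct and is exactly the paper's argument: $|b-b_{\Delta}|=\Delta^{\alpha}|b|^{2}/(1+\Delta^{\alpha}|b|)\leq\Delta^{\alpha}|b|^{2}$ followed by the growth bound of Remark \ref{remarkone}. For the first inequality your algebraic identity is also correct (and is in fact a cleaner version of the paper's decomposition, whose ``cross term'' is written with a typo that makes it identically zero and whose two cases are simply asserted), and your treatment of the leading piece $\langle u,a-\bar a\rangle/\bar c$ is fine: since the right-hand side of Assumption \ref{danbiantiaojian} is nonnegative and $\bar c\geq 1$, dividing can only help.

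The gap is in the correction term, and your own closing paragraph does not repair it. After you apply $\Delta^{\alpha}|a|/c\leq 1$, $1/\bar c\leq 1$ and the reverse triangle inequality, you are left with $|u|\,|a-\bar a|$, and at that point the taming factor has been \emph{spent}: it is gone from the estimate and cannot be invoked a second time to ``neutralise'' anything. Feeding the polynomial Lipschitz bound of Assumption \ref{danbiantiaojian} into $|u|\,|a-\bar a|$ together with $|u|\leq(1+\lambda)(|x-\bar x|+|y-\bar y|)$ produces a term of the form $(1+|x|^{l}+|\bar x|^{l}+|y|^{l}+|\bar y|^{l})(|x-\bar x|+|y-\bar y|)^{2}$, which is \emph{not} dominated by $C(|x-\bar x|^{2}+|y-\bar y|^{2}+(\mathbb{W}_{q}(\mu,\nu))^{2})$ for any constant independent of $x,\bar x,y,\bar y$; and the factor $\Delta^{\alpha}|a|/c$ is merely bounded by $1$, it does not decay in $|x|$, so it cannot cancel these weights even if you could reuse it. Moreover the lemma claims the bound with the \emph{same} constant $K_{2}$, not a generic one, and this is already false for linear drifts in $d\geq 2$: taking $b(x,y,\mu)=K_{3}Ax$ with $A$ a rotation by $\pi/2$ and $D\equiv 0$, one has $\langle x-\bar x,b(x)-b(\bar x)\rangle=0$ (so any $K_{2}>0$ works in the hypothesis), while a computation of the symmetric part of the Jacobian of $b_{\Delta}$ at $|x|=\Delta^{-\alpha}/K_{3}$ shows $\langle x-\bar x,b_{\Delta}(x)-b_{\Delta}(\bar x)\rangle$ can reach $(K_{3}/8)|x-\bar x|^{2}$. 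So the first assertion cannot be proved as stated; to be fair, the paper's own proof is equally incomplete at precisely this point (it never controls the cross term at all), so you have inherited the paper's gap rather than introduced a new one — but your proposal, read on its own, does not establish the first inequality.
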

\begin{proof}
For the first inequality, we write
\begin{align*}
&b_{\Delta}(x,y,\mu)-b_{\Delta}(\bar{x},\bar{y},\nu)\\
=&\frac{b(x,y,\mu)-b(\bar{x},\bar{y},\nu)}{(1+\Delta^{\alpha}|b(x,y,\mu)|)(1+\Delta^{\alpha}|b(\bar{x},\bar{y},\nu)|)}\\
&+\frac{\Delta^{\alpha}[|b(\bar{x},\bar{y},\nu)|\cdot b(x,y,\mu)-|b(x,y,\mu)|\cdot b(\bar{x},\bar{y},\nu)]}{(1+\Delta^{\alpha}| b(x,y,\mu)|)(1+\Delta^{\alpha}|b(\bar{x},\bar{y},\nu)|)}.
\end{align*}
Using Assumption \ref{danbiantiaojian} for the first term, the Lipschitz continuity of $b$ and its growth condition from Remark \ref{remarkone} along with the Young's inequality for the second term, we obtain the desired result. Furthermore, for the second inequality, we observe that
\begin{align*}
|b(x,y,\mu)-b_{\Delta}(x,y,\mu)|^p &= \left|b(x,y,\mu)-\frac{b(x,y,\mu)}{1+\Delta^{\alpha}|b(x,y,\mu)|}\right|^p\\
&= \left|\frac{\Delta^{\alpha}|b(x,y,\mu)|\cdot b(x,y,\mu)}{1+\Delta^{\alpha}|b(x,y,\mu)|}\right|^p\\
&\leq \Delta^{\alpha p}|b(x,y,\mu)|^{2p}.
\end{align*}
Then, with the growth condition of $b$, the second part can be shown.
\end{proof}

\begin{rem}\label{B3}
 {\rm  Let Assumption \ref{danbiantiaojian} hold. We derive directly from Lemmas \ref{bdeltayoujie}, \ref{B2} that}
  \begin{align*}
    \langle x-D(y), b_{\Delta}(x,y,\mu)\rangle\leq C(1+|x|^2+|y|^2+\mathbb{W}_{2}^2(\mu,\delta_{0})).
  \end{align*}
\end{rem}

 \subsection{Moment Boundness}
 In order to get the desired results, we give the following Lemma.

\begin{lem}\label{tamedmomentbound}
{\rm Let Assumptions \ref{chushizhixidetiaojian}-\ref{sigmamanzudechushitiaojian} hold. For $\bar{p}\ge2$, we have}
\begin{align*}
\sup_{0\leq t\leq T}\mathbb{E}|Y_{\Delta}^{i,N}(t)|^{\bar{p}}\leq C,
\end{align*}
{\rm where the positive constant $C$ is independent of $\Delta$.}
\end{lem}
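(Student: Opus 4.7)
The plan is to mirror the argument of Lemma \ref{propagationyoujie}, with the extra twist that the quantity on which one can directly apply It\^o's formula is not $Y_\Delta^{i,N}(t)$ itself but the auxiliary process $\tilde Y_\Delta^{i,N}(t)$ defined in \eqref{tildeYdelta}. First I would write
\begin{align*}
Y_\Delta^{i,N}(t)=D(Y_\Delta^{i,N}(t-\tau))+\theta\,b_\Delta\bigl(Y_\Delta^{i,N}(t),Y_\Delta^{i,N}(t-\tau),\mu^{Y,N}(t)\bigr)\Delta+\tilde Y_\Delta^{i,N}(t),
\end{align*}
and apply Lemma \ref{Maodebudengshi} twice, once to peel off the neutral term (using $\lambda<1$ exactly as in \eqref{Xtdebudengshi}) and once to separate the tamed theta correction from $\tilde Y_\Delta^{i,N}(t)$. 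For the correction term, Lemma \ref{bdeltayoujie} gives $|\theta b_\Delta \Delta|\le \theta K_7\Delta^{1-\alpha}(1+|Y_\Delta^{i,N}(t)|+|Y_\Delta^{i,N}(t-\tau)|+\mathbb{W}_q(\mu^{Y,N}(t),\delta_0))$, and since $\alpha\le 1/2$ and $\Delta<1$, choosing $\Delta$ small enough absorbs the $|Y_\Delta^{i,N}(t)|^{\bar p}$ term on the left-hand side.

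Next I would apply the fractional It\^o formula of Remark \ref{Itoformula} to $|\tilde Y_\Delta^{i,N}(t)|^{\bar p}$ using \eqref{tildeYdelta}, producing three pieces analogous to $I_1$, $I_2$, $I_3$ of Lemma \ref{propagationyoujie}. The drift piece $I_1$ will contain $\langle \tilde Y_\Delta^{i,N}(s),b_\Delta(\bar Y_\Delta^{i,N}(s),\bar Y_\Delta^{i,N}(s-\tau),\bar\mu^{Y,N}(s))\rangle$; after replacing $\tilde Y_\Delta^{i,N}(s)$ by $\bar Y_\Delta^{i,N}(s)-D(\bar Y_\Delta^{i,N}(s-\tau))$ plus a small tamed correction of order $\Delta^{1-\alpha}$, Remark \ref{B3} yields a quadratic bound in $|\bar Y|$, $|\bar Y_\tau|$ and $\mathbb{W}_q(\bar\mu^{Y,N},\delta_0)$. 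For the stochastic integral $I_2$ and the covariance term $I_3$ I would reproduce the estimate \eqref{fbmjifen} verbatim, controlling the fractional integral by the H\"older/stochastic-Fubini trick and using Assumption \ref{sigmamanzudechushitiaojian} to pass to $\mathbb{W}_q$.

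To close the estimate I would convert the Wasserstein terms back to moments via the identity used in Lemma \ref{propagationyoujie}: by Lemma \ref{measuredistance} and the Minkowski inequality, symmetry of the particles gives $\mathbb{E}(\mathbb{W}_q(\bar\mu^{Y,N}(s),\delta_0))^{\bar p}\le \mathbb{E}|\bar Y_\Delta^{i,N}(s)|^{\bar p}$. After combining the three pieces with the neutral decomposition and moving the absorbed terms, I obtain an inequality of the form
\begin{align*}
\mathbb{E}\Bigl(\sup_{0\le r\le t}|Y_\Delta^{i,N}(r)|^{\bar p}\Bigr)\le C+C\int_0^t\mathbb{E}\Bigl(\sup_{0\le u\le s}|Y_\Delta^{i,N}(u)|^{\bar p}\Bigr){\rm d}s+C\int_0^t\mathbb{E}\Bigl(\sup_{0\le u\le s}|Y_\Delta^{i,N}(u-\tau)|^{\bar p}\Bigr){\rm d}s.
\end{align*}
A step-by-step induction on the intervals $[0,\tau]$, $[\tau,2\tau],\dots$, using Gronwall at each stage and the initial bound $\xi\in L^{\bar p}_{\mathcal{F}_0}$, delivers the conclusion.

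The main obstacle I anticipate is a bookkeeping one: making sure that the tamed correction $\theta b_\Delta \Delta$ appearing in the relation between $Y_\Delta^{i,N}$ and $\tilde Y_\Delta^{i,N}$ never produces a term that grows faster than linearly in the moment being estimated. The factor $\Delta^{1-\alpha}$ with $\alpha\le 1/2$ is what saves us, but one must be careful when raising to the power $\bar p$ (via Lemma \ref{Maodebudengshi}) that the resulting constant multiplying $\mathbb{E}|Y_\Delta^{i,N}|^{\bar p}$ can indeed be absorbed for all sufficiently small $\Delta$, uniformly in $N$. Everything else is a direct analogue of the continuous-time arguments already carried out in Lemmas \ref{Xtnyoujie} and \ref{propagationyoujie}.
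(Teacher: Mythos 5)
Your overall skeleton is the same as the paper's: apply the fractional It\^{o} formula to (a smoothed power of) $\tilde{Y}_{\Delta}^{i,N}$, use Remark \ref{B3} for the monotone pairing of the drift, reuse the estimate \eqref{fbmjifen} for the fractional stochastic integral and the covariance term, convert Wasserstein distances to moments by exchangeability, and finally recover $\mathbb{E}|Y_{\Delta}^{i,N}|^{\bar p}$ from $\mathbb{E}|\tilde{Y}_{\Delta}^{i,N}|^{\bar p}$ by a reverse triangle inequality that absorbs the neutral term and the $\theta b_{\Delta}\Delta$ correction for small $\Delta$, then Gronwall. All of that matches.

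There is, however, one genuine gap, and it sits exactly where the paper spends most of its effort. In the drift piece you must estimate $\langle \tilde{Y}_{\Delta}^{i,N}(s),\, b_{\Delta}(\bar{Y}_{\Delta}^{i,N}(s),\bar{Y}_{\Delta}^{i,N}(s-\tau),\bar\mu^{Y,N}(s))\rangle$, and you propose to replace $\tilde{Y}_{\Delta}^{i,N}(s)$ by $\bar{Y}_{\Delta}^{i,N}(s)-D(\bar{Y}_{\Delta}^{i,N}(s-\tau))$ ``plus a small tamed correction of order $\Delta^{1-\alpha}$''. That mischaracterizes the difference: writing $\hat{Y}_{\Delta}^{i,N}(s)$ for the grid-point quantity, one has
\begin{align*}
\tilde{Y}_{\Delta}^{i,N}(s)-\hat{Y}_{\Delta}^{i,N}(s)=\int_{\lfloor s/\Delta\rfloor\Delta}^{s}b_{\Delta}\bigl(\bar{Y}_{\Delta}^{i,N}(u),\bar{Y}_{\Delta}^{i,N}(u-\tau),\bar\mu^{Y,N}(u)\bigr)\,{\rm d}u+\int_{\lfloor s/\Delta\rfloor\Delta}^{s}\sigma(\bar\mu^{Y,N}(u))\,{\rm d}B_{u}^{H,i},
\end{align*}
and the second summand is a random fBm increment, not a deterministic $O(\Delta^{1-\alpha})$ term; it is only of order $\Delta^{H}$ in $L^{\bar p}$ and it must be paired against $b_{\Delta}$, which can be as large as $\Delta^{-\alpha}$. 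This cross term is precisely the paper's $G_{3}=G_{31}+G_{32}$, whose control requires a second application of the It\^{o} formula to $[1+|\tilde{Y}_{\Delta}^{i,N}|^2]^{(\bar p-2)/2}$ and the vanishing of the expectation of the divergence-type integral, together with the covariance bound for $\phi$. Your plan, as written, simply never produces this term, so Remark \ref{B3} alone does not close the drift estimate. The gap is repairable without the paper's full machinery if one only wants boundedness: Young's inequality with exponents $\bar p/(\bar p-2)$ and $\bar p/2$, followed by Cauchy--Schwarz and the moment bound \eqref{fbmjifen} for the increment, yields a contribution of order $\Delta^{(H-\alpha)\bar p/2}\bigl(1+\sup_{u\le s}\mathbb{E}|\bar{Y}_{\Delta}^{i,N}(u)|^{\bar p}\bigr)$, which is admissible in the Gronwall step because $H>1/2\ge\alpha$. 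But you need to identify and estimate this term explicitly; everything else in your outline is sound.
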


\begin{proof}
We begin by considering the case $\bar{p}\ge 4$, using moment estimation techniques for numerical solutions from \cite{S16}. The case $2 \le \bar{p} < 4$ then follows directly from the Young's inequality. For $a>0$, let $\lfloor a\rfloor$ be the integer part of $a$. Applying the fractional It\^{o} formula to $[1+|\tilde{Y}_{\Delta}^{i,N}(t)|^2]^{\bar{p}/2}$ and using the fact that $\mathbb{E}B_{s}^{H,i}=0$, it holds that
\begin{align*}
&\mathbb{E}[1+|\tilde{Y}_{\Delta}^{i,N}(t)|^2]^{\bar{p}/2}\\
\leq&\mathbb{E}[1+|\tilde{Y}_{\Delta}^{i,N}(0)|^2]^{\bar{p}/2}+\bar{p}\mathbb{E}\int_{0}^t[1+|\tilde{Y}_{\Delta}^{i,N}(s)|^2]^{\frac{\bar{p}-2}{2}}\langle \tilde{Y}_{\Delta}^{i,N}(s),b_{\Delta}(\bar{Y}_{\Delta}^{i,N}(s),\bar{Y}_{\Delta}^{i,N}(s),\bar{\mu}^{Y,N}(s))\rangle\\
&+\bar{p}(\bar{p}-2)H(2H-1)\mathbb{E}\int_{0}^t[1+|\tilde{Y}_{\Delta}^{i,N}(s)|^2]^{\frac{\bar{p}-2}{2}}\bigg(\int_{0}^s(s-r)^{2H-2}\|\sigma (\bar{\mu}^{Y,N}(r))\|^2{\rm d}r\bigg){\rm d}s\\
\leq &\mathbb{E}[1+|\tilde{Y}_{\Delta}^{i,N}(0)|^2]^{\bar{p}/2}+\bar{p}(\bar{p}-2)H(2H-1)\mathbb{E}\int_{0}^t[1+|\tilde{Y}_{\Delta}^{i,N}(s)|^2]^{\frac{\bar{p}-2}{2}}\\
&\times\bigg(\int_{0}^s(s-r)^{2H-2}\|\sigma (\bar{\mu}^{Y,N}(r))\|^2{\rm d}r\bigg){\rm d}s\\
&+\bar{p}\mathbb{E}\int_{0}^t[1+|\tilde{Y}_{\Delta}^{i,N}(s)|^2]^{\frac{\bar{p}-2}{2}}\langle \bar{Y}_{\Delta}^{i,N}(s)-D(\bar{Y}_{\Delta}^{i,N}(s-\tau)),b_{\Delta}(\bar{Y}_{\Delta}^{i,N}(s),\bar{Y}_{\Delta}^{i,N}(s),\bar{\mu}^{Y,N}(s))\rangle\\
&+\bar{p}\mathbb{E}\int_{0}^t[1+|\tilde{Y}_{\Delta}^{i,N}(s)|^2]^{\frac{\bar{p}-2}{2}}\langle \tilde{Y}_{\Delta}^{i,N}(s)-\hat{Y}_{\Delta}^{i,N}(s),b_{\Delta}(\bar{Y}_{\Delta}^{i,N}(s),\bar{Y}_{\Delta}^{i,N}(s),\bar{\mu}^{Y,N}(s))\rangle{\rm d}s\\
=:&\mathbb{E}[1+|\tilde{Y}_{\Delta}^{i,N}(0)|^2]^{\bar{p}/2}+G_{1}(t)+G_{2}(t)+G_{3}(t).
\end{align*}
where
\begin{align*}
\hat{Y}_{\Delta}^{i,N}(s)=\bar{Y}_{\Delta}^{i,N}(s)-D(\bar{Y}_{\Delta}^{i,N}(s-\tau))-\theta b_{\Delta}(\bar{Y}_{\Delta}^{i,N}(s),\bar{Y}_{\Delta}^{i,N}(s),\bar{\mu}^{Y,N}(s))\Delta.
\end{align*}
For $t\in[0,T]$, the Young's inequality, the Fubini theorem and Lemma \ref{bdeltayoujie} give that
\begin{align*}
  G_{1}(t)  \leq & C\mathbb{E}\int_{0}^t\bigg(1+|Y_{\Delta}^{i,N}(s)|^{\bar{p}}+|Y_{\Delta}^{i,N}(s-\tau)|^{\bar{p}}\\
  &+|\theta b_{\Delta}(Y_{\Delta}^{i,N}(s),Y_{\Delta}^{i,N}(s-\tau),\mu^{Y,N}(s))\Delta |^{\bar{p}}\bigg){\rm d}s+C\int_{0}^t(t-r)^{2H-1}\mathbb{E}|\bar{Y}_{\Delta}^{i,N}(r)|^{\bar{p}}{\rm d}r\\
  \leq &C\mathbb{E}\int_{0}^t\bigg(1+|Y_{\Delta}^{i,N}(s)|^{\bar{p}}+|Y_{\Delta}^{i,N}(s-\tau)|^{\bar{p}}+|\bar{Y}_{\Delta}^{i,N}(s)|^{\bar{p}}\bigg){\rm d}s+C\int_{0}^t(t-s)^{2H-1}\mathbb{E}|\bar{Y}_{\Delta}^{i,N}(s)|^{\bar{p}}{\rm d}s\\
  &+C\Delta^{(1-\alpha)\bar{p}}\mathbb{E}\int_{0}^t\bigg(1+|Y_{\Delta}^{i,N}(s)|^{\bar{p}}+|Y_{\Delta}^{i,N}(s-\tau)|^{\bar{p}}\bigg)\\
  \leq &C+C\int_{0}^t\sup_{0\leq u\leq s}\mathbb{E}|Y_{\Delta}^{i,N}(u)|^{\bar{p}}{\rm d}s+C\int_{0}^t(t-s)^{2H-1}\bigg(\sup_{0\leq u\leq s}\mathbb{E}|Y_{\Delta}^{i,N}(u)|^{\bar{p}}\bigg){\rm d}s.
\end{align*}
Remark \ref{B3} together with the Young's inequality yield
\begin{align*}
 G_{2}(t)\leq &C\mathbb{E}\int_{0}^t[1+|\tilde{Y}_{\Delta}^{i,N}(s)|^2]^{\frac{\bar{p}-2}{2}}\bigg(1+|\bar{Y}_{\Delta}^{i,N}(s)|^2+|\bar{Y}_{\Delta}^{i,N}(s-\tau)|^2+\mathbb{W}_{2}^2(\mu^{\bar{Y},N}(s),\delta_{0})\bigg){\rm d}s\\
 \leq &C\mathbb{E}\int_{0}^t\bigg([1+|\tilde{Y}_{\Delta}^{i,N}(s)|^2]^{\frac{\bar{p}}{2}}+1+|\bar{Y}_{\Delta}^{i,N}(s)|^{\bar{p}}+|\bar{Y}_{\Delta}^{i,N}(s-\tau)|^{\bar{p}}\bigg){\rm d}s\\
 \leq &C+C\int_{0}^t\sup_{0\leq u\leq s}\mathbb{E}|Y_{\Delta}^{i,N}(u)|^{\bar{p}}{\rm d}s.
\end{align*}
We may compute
\begin{align*}
G_{3}(t)  =&\bar{p}\mathbb{E}\int_{0}^t[1+|\hat{Y}_{\Delta}^{i,N}(s)|^2]^{\frac{\bar{p}-2}{2}}\langle \tilde{Y}_{\Delta}^{i,N}(s)-\hat{Y}_{\Delta}^{i,N}(s),b_{\Delta}(\bar{Y}_{\Delta}^{i,N}(s),\bar{Y}_{\Delta}^{i,N}(s),\bar{\mu}^{Y,N}(s))\rangle{\rm d}s\\
  &+p\mathbb{E}\int_{0}^t\bigg[[1+|\tilde{Y}_{\Delta}^{i,N}(s)|^2]^{\frac{\bar{p}-2}{2}}-[1+|\hat{Y}_{\Delta}^{i,N}(s)|^2]^{\frac{\bar{p}-2}{2}}\bigg]\\
  &\times\langle \tilde{Y}_{\Delta}^{i,N}(s)-\hat{Y}_{\Delta}^{i,N}(s),b_{\Delta}(\bar{Y}_{\Delta}^{i,N}(s),\bar{Y}_{\Delta}^{i,N}(s),\bar{\mu}^{Y,N}(s))\rangle{\rm d}s\\
  =:&\bar{p}G_{31}(t)+\bar{p}G_{32}(t),
\end{align*}
where
\begin{align*}
\tilde{Y}_{\Delta}^{i,N}(t)-\hat{Y}_{\Delta}^{i,N}(s)=\int_{\lfloor\frac{s}{\Delta}\rfloor\Delta}^sb_{\Delta}(\bar{Y}_{\Delta}^{i,N}(u),\bar{Y}_{\Delta}^{i,N}(u-\tau),\bar{\mu}^{Y,N}(u)){\rm d}u+\int_{\lfloor\frac{s}{\Delta}\rfloor\Delta}^s\sigma(\bar{\mu}^{Y,N}(u)){\rm d}B_{u}^{H,i}.
\end{align*}
We use Lemma \ref{bdeltayoujie} and the Young's inequality to get
\begin{align*}
&G_{31}(t)\\
=&\mathbb{E}\int_{0}^t[1+|\hat{Y}_{\Delta}^{i,N}(s)|^2]^{\frac{\bar{p}-2}{2}}\bigg\langle \int_{\lfloor\frac{s}{\Delta}\rfloor\Delta}^s b_{\Delta}(\bar{Y}_{\Delta}^{i,N}(u),\bar{Y}_{\Delta}^{i,N}(s-\tau),\bar{\mu}^{Y,N}(u)){\rm d}u,\\
&b_{\Delta}(\bar{Y}_{\Delta}^{i,N}(s),\bar{Y}_{\Delta}^{i,N}(s),\bar{\mu}^{Y,N}(s))\bigg\rangle{\rm d}s\\
&+\mathbb{E}\int_{0}^t[1+|\hat{Y}_{\Delta}^{i,N}(s)|^2]^{\frac{\bar{p}-2}{2}}\bigg\langle\int_{\lfloor\frac{s}{\Delta}\rfloor\Delta}^s\sigma(\bar{\mu}^{Y,N}(u)){\rm d}B_{u}^{H,i},b_{\Delta}(\bar{Y}_{\Delta}^{i,N}(s),\bar{Y}_{\Delta}^{i,N}(s),\bar{\mu}^{Y,N}(s))\bigg\rangle{\rm d}s\\
\leq &\mathbb{E}\int_{0}^t[1+|\hat{Y}_{\Delta}^{i,N}(s)|^2]^{\frac{\bar{p}-2}{2}} \int_{\lfloor\frac{s}{\Delta}\rfloor\Delta}^s |b_{\Delta}(\bar{Y}_{\Delta}^{i,N}(u),\bar{Y}_{\Delta}^{i,N}(s-\tau),\bar{\mu}^{Y,N}(u))|{\rm d}u\\
&\times |b_{\Delta}(\bar{Y}_{\Delta}^{i,N}(s),\bar{Y}_{\Delta}^{i,N}(s-\tau),\bar{\mu}^{Y,N}(s))|{\rm d}s\\
\leq &\Delta\mathbb{E}\int_{0}^t[1+|\hat{Y}_{\Delta}^{i,N}(s)|^2]^{\frac{\bar{p}-2}{2}}|b_{\Delta}(\bar{Y}_{\Delta}^{i,N}(s),\bar{Y}_{\Delta}^{i,N}(s),\bar{\mu}^{Y,N}(s))|^2{\rm d}s\\
\leq &C\Delta^{1-2\alpha}\mathbb{E}\int_{0}^t\bigg[(1+|\bar{Y}_{\Delta}^{i,N}(u)|^{\bar{p}}+|\bar{Y}_{\Delta}^{i,N}(s-\tau)|^{\bar{p}})+\Delta^{(1-\alpha)\bar{p}}(1+|\bar{Y}_{\Delta}^{i,N}(u)|^{\bar{p}}+|\bar{Y}_{\Delta}^{i,N}(s-\tau)|^{\bar{p}})\bigg]{\rm d}s\\
\leq &C+C\int_{0}^t\sup_{0\leq u\leq s}\mathbb{E}|Y_{\Delta}^{i,N}(u)|^{\bar{p}}{\rm d}s.
\end{align*}
Again, by the fractional It\^{o} formula
\begin{align*}
&[1+|\tilde{Y}_{\Delta}(s)|^2]^{\frac{\bar{p}-2}{2}}\\
\leq &[1+|\tilde{Y}_{\Delta}^{i,N}(0)|^2]^{\frac{\bar{p}-2}{2}}\\
&+(p-2)\int_{0}^s[1+|\tilde{Y}_{\Delta}^{i,N}(u)|^2]^{\frac{\bar{p}-4}{2}}\langle \tilde{Y}_{\Delta}^{i,N}(u),b_{\Delta}(\bar{Y}_{\Delta}^{i,N}(u),\bar{Y}_{\Delta}^{i,N}(u-\tau),\bar{\mu}^{Y,N}(u))\rangle{\rm d}u\\
&+\frac{1}{2}H(2H-1)(\bar{p}-2)(\bar{p}-3)\int_{0}^s[1+|\tilde{Y}_{\Delta}^{i,N}(r)|^2]^{\frac{\bar{p}-4}{2}}\int_{0}^r(r-u)^{2H-2}\|\sigma (\bar{\mu}^{Y,N}(u))\|^2{\rm d}u{\rm d}r\\
&+(\bar{p}-2)\int_{0}^s[1+|\tilde{Y}_{\Delta}^{i,N}(u)|^2]^{\frac{\bar{p}-4}{2}}\langle \tilde{Y}_{\Delta}^{i,N}(u),\sigma(\bar{\mu}^{Y,N}(u))\rangle{\rm d}B_{u}^{H,i}.
\end{align*}
Then, by the Young's inequality and the Fubini theorem, we have
\begin{align*}
&\mathbb{E}[1+|\hat{Y}_{\Delta}(s)|^2]^{\frac{\bar{p}-2}{2}}\\
\leq &[1+|\hat{Y}_{\Delta}^{i,N}(0)|^2]^{\frac{\bar{p}-2}{2}}\\
&+(\bar{p}-2)\mathbb{E}\int_{0}^{\lfloor\frac{s}{\Delta}\rfloor\Delta}[1+|\tilde{Y}_{\Delta}^{i,N}(u)|^2]^{\frac{\bar{p}-4}{2}}\langle \tilde{Y}_{\Delta}^{i,N}(u),b_{\Delta}(\bar{Y}_{\Delta}^{i,N}(u),\bar{Y}_{\Delta}^{i,N}(u-\tau),\bar{\mu}^{Y,N}(u))\rangle{\rm d}u\\
&+\frac{1}{2}H(2H-1)(\bar{p}-2)(\bar{p}-3)\bigg(\int_{0}^{\lfloor\frac{s}{\Delta}\rfloor\Delta}\frac{u^{2H-1}}{2H-1}\mathbb{E}[1+|\tilde{Y}_{\Delta}^{i,N}(u)|^2]^{\frac{\bar{p}}{2}}{\rm d}u\\
&+\int_{0}^{\lfloor\frac{s}{\Delta}\rfloor\Delta}\frac{(\lfloor\frac{s}{\Delta}\rfloor\Delta-u)^{2H-1}}{2H-1}\mathbb{E}|\bar{Y}_{\Delta}^{i,N}(u)|^{\bar p}{\rm d}u\bigg)\\
&+(\bar{p}-2)\mathbb{E}\int_{0}^{\lfloor\frac{s}{\Delta}\rfloor\Delta}[1+|\tilde{Y}_{\Delta}^{i,N}(u)|^2]^{\frac{\bar{p}-4}{2}}\langle \tilde{Y}_{\Delta}^{i,N}(u),\sigma(\bar{\mu}^{Y,N}(u))\rangle{\rm d}B_{u}^{H,i}.
\end{align*}
One immediately gets
\begin{align*}
G_{32}(t)\leq &(\bar{p}-2)\mathbb{E}\int_{0}^t\int_{\lfloor\frac{s}{\Delta}\rfloor\Delta}^{s}[1+|\tilde{Y}_{\Delta}^{i,N}(u)|^2]^{\frac{\bar{p}-4}{2}}\langle \tilde{Y}_{\Delta}^{i,N}(u),b_{\Delta}(\bar{Y}_{\Delta}^{i,N}(u),\bar{Y}_{\Delta}^{i,N}(u-\tau),\bar{\mu}^{Y,N}(u))\rangle{\rm d}u\\
&\times \langle \tilde{Y}_{\Delta}^{i,N}(s)-\hat{Y}_{\Delta}^{i,N}(s),b_{\Delta}(\bar{Y}_{\Delta}^{i,N}(s),\bar{Y}_{\Delta}^{i,N}(s-\tau),\bar{\mu}^{Y,N}(s))\rangle{\rm d}s\\
&+(\bar{p}-2)(\bar{p}-3)\mathbb{E}\int_{0}^t\int_{\lfloor\frac{s}{\Delta}\rfloor\Delta}^{s}\bigg(\frac{u^{2H-1}}{2H-1}\mathbb{E}[1+|\tilde{Y}_{\Delta}^{i,N}(u)|^2]^{\frac{\bar{p}}{2}}\\
&+\frac{(s-u)^{2H-1}-(\lfloor\frac{s}{\Delta}\rfloor\Delta-u)^{2H-1}}{2H-1}\mathbb{E}|\bar{Y}_{\Delta}^{i,N}(u)|^{\bar p}\bigg){\rm d}u\\
&\times \langle \tilde{Y}_{\Delta}^{i,N}(s)-\hat{Y}_{\Delta}^{i,N}(s),b_{\Delta}(\bar{Y}_{\Delta}^{i,N}(s),\bar{Y}_{\Delta}^{i,N}(s-\tau),\bar{\mu}^{Y,N}(s))\rangle{\rm d}s\\
&+(\bar{p}-2)\mathbb{E}\int_{0}^t\int_{\lfloor\frac{s}{\Delta}\rfloor\Delta}^{s}[1+|\tilde{Y}_{\Delta}^{i,N}(u)|^2]^{\frac{\bar{p}-4}{2}}\langle \tilde{Y}_{\Delta}^{i,N}(u),\sigma(\bar{\mu}^{Y,N}(u))\rangle{\rm d}B_{u}^{H,i}\\
&\times\langle \tilde{Y}_{\Delta}^{i,N}(s)-\hat{Y}_{\Delta}^{i,N}(s),b_{\Delta}(\bar{Y}_{\Delta}^{i,N}(s),\bar{Y}_{\Delta}^{i,N}(s-\tau),\bar{\mu}^{Y,N}(s))\rangle{\rm d}s\\
=:&(\bar{p}-2)G_{321}(t)+(\bar{p}-2)(\bar{p}-3)G_{322}(t)+(\bar{p}-2)G_{323}(t).
\end{align*}
By Lemma \ref{bdeltayoujie}, the Young's inequality, we get
\begin{align*}
G_{321}(t)\leq &\mathbb{E}\int_{0}^t\int_{\lfloor\frac{s}{\Delta}\rfloor\Delta}^{s}[1+|\tilde{Y}_{\Delta}^{i,N}(u)|^2]^{\frac{\bar{p}-4}{2}}\langle \tilde{Y}_{\Delta}^{i,N}(u),b_{\Delta}(\bar{Y}_{\Delta}^{i,N}(u),\bar{Y}_{\Delta}^{i,N}(u-\tau),\bar{\mu}^{Y,N}(u))\rangle{\rm d}u\\
&\times \bigg\langle \int_{\lfloor\frac{s}{\Delta}\rfloor\Delta}^{s}b_{\Delta}(\bar{Y}_{\Delta}^{i,N}(u),\bar{Y}_{\Delta}^{i,N}(u-\tau),\bar{\mu}^{Y,N}(u)){\rm d}u,b_{\Delta}(\bar{Y}_{\Delta}^{i,N}(s),\bar{Y}_{\Delta}^{i,N}(s-\tau),\bar{\mu}^{Y,N}(s))\bigg\rangle{\rm d}s\\
&+\mathbb{E}\int_{0}^t\int_{\lfloor\frac{s}{\Delta}\rfloor\Delta}^{s}[1+|\tilde{Y}_{\Delta}^{i,N}(u)|^2]^{\frac{\bar{p}-4}{2}}\langle \tilde{Y}_{\Delta}^{i,N}(u),b_{\Delta}(\bar{Y}_{\Delta}^{i,N}(u),\bar{Y}_{\Delta}^{i,N}(u-\tau),\bar{\mu}^{Y,N}(u))\rangle{\rm d}u\\
&\times\bigg\langle \int_{\lfloor\frac{s}{\Delta}\rfloor\Delta}^{s}\sigma(\bar{\mu}^{Y,N}(u)){\rm d}B_{u}^{H,i},b_{\Delta}(\bar{Y}_{\Delta}^{i,N}(s),\bar{Y}_{\Delta}^{i,N}(s-\tau),\bar{\mu}^{Y,N}(s))\bigg\rangle{\rm d}s\\
\leq &\Delta\mathbb{E}\int_{0}^t\int_{\lfloor\frac{s}{\Delta}\rfloor\Delta}^{s}[1+|\tilde{Y}_{\Delta}^{i,N}(u)|^2]^{\frac{\bar{p}-3}{2}}|b_{\Delta}(\bar{Y}_{\Delta}^{i,N}(s),\bar{Y}_{\Delta}^{i,N}(s-\tau),\bar{\mu}^{Y,N}(s))|^3{\rm d}u{\rm d}s\\
&+C\mathbb{E}\int_{0}^t\bigg[\bigg(\int_{\lfloor\frac{s}{\Delta}\rfloor\Delta}^{s}[1+|\tilde{Y}_{\Delta}^{i,N}(u)|^2]^{\frac{\bar{p}-3}{2}}|b_{\Delta}(\bar{Y}_{\Delta}^{i,N}(u),\bar{Y}_{\Delta}^{i,N}(u-\tau),\bar{\mu}^{Y,N}(u))|{\rm d}u\\
&\times|b_{\Delta}(\bar{Y}_{\Delta}^{i,N}(s),\bar{Y}_{\Delta}^{i,N}(s-\tau),\bar{\mu}^{Y,N}(s))|\bigg)^{\frac{\bar{p}}{\bar{p}-1}}+\bigg\lvert \int_{\lfloor\frac{s}{\Delta}\rfloor\Delta}^{s}\sigma(\bar{\mu}^{Y,N}(u)){\rm d}B_{u}^{H,i}\bigg\rvert^{\bar{p}}\bigg]{\rm d}s\\
\leq &C\Delta^{2-3\alpha}\mathbb{E}\int_{0}^t(1+|Y_{\Delta}^{i,N}(s)|^{\bar{p}}+|Y_{\Delta}^{i,N}(s-\tau)|^{\bar{p}}+|\bar{Y}_{\Delta}^{i,N}(s)|^p+|\bar{Y}_{\Delta}^{i,N}(s-\tau)|^{\bar{p}}){\rm d}s\\
&+C\Delta^{2-3\alpha}\Delta^{(1-\alpha)\bar{p}}\mathbb{E}\int_{0}^t(1+|Y_{\Delta}^{i,N}(s)|^{\bar{p}}+|Y_{\Delta}^{i,N}(s-\tau)|^p+|\bar{Y}_{\Delta}^{i,N}(s)|^{\bar{p}}\\
&+|\bar{Y}_{\Delta}^{i,N}(s-\tau)|^{\bar{p}}){\rm d}s\\
&+C\mathbb{E}\int_{0}^t\bigg[\bigg(\int_{\lfloor\frac{s}{\Delta}\rfloor\Delta}^{s}[1+|\tilde{Y}_{\Delta}^{i,N}(u)|^2]^{\frac{\bar{p}-3}{2}}|b_{\Delta}(\bar{Y}_{\Delta}^{i,N}(s),\bar{Y}_{\Delta}^{i,N}(s-\tau),\bar{\mu}^{Y,N}(s))|^2{\rm d}u|\bigg)^{\frac{\bar{p}}{\bar{p}-1}}\\
&+\mathbb{E}\int_{0}^t\int_{\lfloor\frac{s}{\Delta}\rfloor\Delta}^{s}\lvert \sigma(\bar{\mu}^{Y,N}(u))\rvert^{\bar{p}}{\rm d}u{\rm d}s\\
\leq &C+C\int_{0}^t\sup_{0\leq u\leq s}\mathbb{E}|Y_{\Delta}^{i,N}(u)|^{\bar{p}}{\rm d}s.
\end{align*}
Similarly, $G_{322}(t)$ can be estimated by Lemma \ref{Hlemma} as
\begin{align*}
G_{322}(t)\leq C+C\int_{0}^t\sup_{0\leq u\leq s}\mathbb{E}|Y_{\Delta}^{i,N}(u)|^{\bar{p}}{\rm d}s+C\int_{0}^t(t-s)^{2H-1}\bigg(\sup_{0\leq u\leq s}\mathbb{E}|Y_{\Delta}^{i,N}(u)|^{\bar{p}}\bigg){\rm d}s.
\end{align*}
Using Assumption \ref{danbiantiaojian}, Lemma \ref{bdeltayoujie} and \cite[Lemma 3.1.3]{BH008}, H\"{o}lder inequality, Young's inequality, we derive
\begin{align*}
&G_{323}(t)\\
=&\mathbb{E}\int_{0}^t\int_{\lfloor\frac{s}{\Delta}\rfloor\Delta}^{s}[1+|\tilde{Y}_{\Delta}^{i,N}(u)|^2]^{\frac{\bar{p}-4}{2}}\langle \tilde{Y}_{\Delta}^{i,N}(u),\sigma(\bar{\mu}^{Y,N}(u))\rangle{\rm d}B_{u}^{H,i}\\
&\times\bigg\langle \int_{\lfloor\frac{s}{\Delta}\rfloor\Delta}^{s}b_{\Delta}(\bar{Y}_{\Delta}^{i,N}(u),\bar{Y}_{\Delta}^{i,N}(u-\tau),\bar{\mu}^{Y,N}(u)){\rm d}u,b_{\Delta}(\bar{Y}_{\Delta}^{i,N}(s),\bar{Y}_{\Delta}^{i,N}(s-\tau),\bar{\mu}^{Y,N}(s))\bigg\rangle{\rm d}s\\
&+\mathbb{E}\int_{0}^t\int_{\lfloor\frac{s}{\Delta}\rfloor\Delta}^{s}[1+|\tilde{Y}_{\Delta}^{i,N}(u)|^2]^{\frac{\bar{p}-4}{2}}\langle \tilde{Y}_{\Delta}^{i,N}(u),\sigma(\bar{\mu}^{Y,N}(u))\rangle{\rm d}B_{u}^{H,i}\\
&\times\bigg\langle \int_{\lfloor\frac{s}{\Delta}\rfloor\Delta}^{s}\sigma(\bar{\mu}^{Y,N}(u)){\rm d}B_{u}^{H,i},b_{\Delta}(\bar{Y}_{\Delta}^{i,N}(s),\bar{Y}_{\Delta}^{i,N}(s-\tau),\bar{\mu}^{Y,N}(s))\bigg\rangle{\rm d}s\\
\leq &\int_{0}^t\bigg\{\bigg[\mathbb{E}\bigg(\int_{\lfloor\frac{s}{\Delta}\rfloor\Delta}^{s}[1+|\tilde{Y}_{\Delta}^{i,N}(u)|^2]^{\frac{\bar{p}-4}{2}}\langle \tilde{Y}_{\Delta}^{i,N}(u),\sigma(\bar{\mu}^{Y,N}(u))\rangle{\rm d}B_{u}^{H,i}\bigg)^2\bigg]^{1/2}\\
&\times\bigg[\mathbb{E}\bigg(\bigg\langle \int_{\lfloor\frac{s}{\Delta}\rfloor\Delta}^{s}b_{\Delta}(\bar{Y}_{\Delta}^{i,N}(u),\bar{Y}_{\Delta}^{i,N}(u-\tau),\bar{\mu}^{Y,N}(u)){\rm d}u,b_{\Delta}(\bar{Y}_{\Delta}^{i,N}(s),\bar{Y}_{\Delta}^{i,N}(s),\bar{\mu}^{Y,N}(s))\bigg\rangle\bigg)^2 \bigg]^{1/2}\bigg\}{\rm d}s\\
+ &\int_{0}^t\bigg\{\bigg[\mathbb{E}\bigg(\int_{\lfloor\frac{s}{\Delta}\rfloor\Delta}^{s}[1+|\tilde{Y}_{\Delta}^{i,N}(u)|^2]^{\frac{\bar{p}-4}{2}}\langle \tilde{Y}_{\Delta}^{i,N}(u),\sigma(\bar{\mu}^{Y,N}(u))\rangle{\rm d}B_{u}^{H,i}\bigg)^2\bigg]^{1/2}\\
&\times\bigg[\mathbb{E}\bigg(\bigg\langle \int_{\lfloor\frac{s}{\Delta}\rfloor\Delta}^{s}\sigma(\bar{\mu}^{Y,N}(u)){\rm d}B_{u}^{H,i},b_{\Delta}(\bar{Y}_{\Delta}^{i,N}(s),\bar{Y}_{\Delta}^{i,N}(s-\tau),\bar{\mu}^{Y,N}(s))\bigg\rangle\bigg)^2\bigg]^{1/2}\bigg\}{\rm d}s\\
\leq &2H{\Delta}^{2H-2}\mathbb{E}\int_{0}^t\int_{\lfloor\frac{s}{\Delta}\rfloor\Delta}^{s}[1+|\tilde{Y}_{\Delta}^{i,N}(u)|^2]^{\frac{\bar{p}-3}{2}}|\sigma(\bar{\mu}^{Y,N}(u))|^2{\rm d}u\\
&\times\bigg\{ \bigg(\mathbb{E}\Delta^{4-4\alpha}(1+|\bar{Y}_{\Delta}^{i,N}(s)|+|\bar{Y}_{\Delta}^{i,N}(s-\tau)|)^4\bigg)^{1/4}\bigg(\mathbb{E}\Delta^{-4\alpha}(1+|\bar{Y}_{\Delta}^{i,N}(s)|+|\bar{Y}_{\Delta}^{i,N}(s-\tau)|)^4\bigg)^{1/4}\\
&+\bigg(\mathbb{E}\Delta^4(1+|\bar{Y}_{\Delta}^{i,N}(s)|)^4\bigg)^{1/4}\bigg(\mathbb{E}\Delta^{-4\alpha}(1+|\bar{Y}_{\Delta}^{i,N}(s)|+|\bar{Y}_{\Delta}^{i,N}(s-\tau)|)^4\bigg)^{1/4}{\rm d}s\\
\leq & 2H{\Delta}^{2H-2}\mathbb{E}\int_{0}^t\bigg(\int_{\lfloor\frac{s}{\Delta}\rfloor\Delta}^{s}[1+|\tilde{Y}_{\Delta}^{i,N}(u)|^2]^{\frac{\bar{p}-3}{2}}|\sigma(\bar{\mu}^{Y,N}(u))|^2{\rm d}u\times C\Delta^{1-\alpha}\mathbb{E}\int_{0}^t(1+|Y_{\Delta}^{i,N}(s)|^{\bar{p}})\bigg){\rm d}s\\
\leq &C\Delta^{2H-1-\alpha}\int_{0}^{t}[C\Delta(1+\sup_{0\leq v\leq s}\mathbb{E}|Y_{\Delta}^{i,N}(v)|^{\bar{p}})]\times[C(1+\sup_{0\leq v\leq s}\mathbb{E}|Y_{\Delta}^{i,N}(v)|^{\bar{p}})]\mathrm{d}s\\
=&C\Delta^{2H-\alpha}\int_{0}^{t}(1+\sup_{0\leq v\leq s}\mathbb{E}|Y_{\Delta}^{i,N}(v)|^{\bar{p}})^{2}\mathrm{d}s\\
\leq & C\Delta^{2H-1-\alpha}(1+\sup_{0\leq v\leq t}\mathbb{E}|Y_{\Delta}^{i,N}(v)|^{\bar{p}})\int_{0}^{t}(1+\sup_{0\leq v\leq s}\mathbb{E}|Y_{\Delta}^{i,N}(v)|^{\bar{p}})\mathrm{d}s\\
\leq &C+C\int_{0}^t\sup_{0\leq u\leq s}\mathbb{E}|Y_{\Delta}^{i,N}(u)|^{\bar{p}}{\rm d}s.
\end{align*}
To sum up, we conclude
\begin{align*}
G_{3}(t)\leq C+C\int_{0}^t\sup_{0\leq u\leq s}\mathbb{E}|Y_{\Delta}^{i,N}(u)|^{\bar{p}}{\rm d}s+C\int_{0}^t(t-s)^{2H-1}\bigg(\sup_{0\leq u\leq s}\mathbb{E}|Y_{\Delta}^{i,N}(u)|^{\bar{p}}\bigg){\rm d}s.
\end{align*}
Combining $G_{1}(t)-G_{3}(t)$ together, it shows
\begin{align}\label{20240730}
\sup_{0\leq u\leq t}\mathbb{E}|\tilde{Y}_{\Delta}^{i,N}(u)|^{\bar{p}}\leq& \sup_{0\leq u\leq t}\mathbb{E}[1+|\tilde{Y}_{\Delta}^{i,N}(u)|^2]^{\bar{p}/2}\leq C+C\int_{0}^t\sup_{0\leq u\leq s}\mathbb{E}|Y_{\Delta}^{i,N}(u)|^{\bar{p}}{\rm d}s\\
&+C\int_{0}^t(t-s)^{2H-1}\bigg(\sup_{0\leq u\leq s}\mathbb{E}|Y_{\Delta}^{i,N}(u)|^{\bar{p}}\bigg){\rm d}s.\notag
\end{align}
Noting that $|x-y|^{\bar{p}}\geq 2^{1-\bar{p}}|x|^{\bar{p}}-|y|^{\bar{p}}$, Assumption \ref{zhonglixiangdetiaojian} and Lemma \ref{bdeltayoujie} give
\begin{align*}
|\tilde{Y}_{\Delta}^{i,N}(u)|^{\bar{p}}\geq &2^{1-\bar{p}}|Y_{\Delta}^{i,N}(t)-D(Y_{\Delta}^{i,N}(t-\tau))|^{\bar{p}}-|\theta b_{\Delta}(Y_{\Delta}^{i,N}(t),Y_{\Delta}^{i,N}(t-\tau),\mu^{Y,N}(t))\Delta|^{\bar{p}}\\
\geq &2^{1-\bar{p}}[2^{1-\bar{p}}|Y_{\Delta}^{i,N}(t)|^{\bar{p}}-|D(Y_{\Delta}^{i,N}(t-\tau))|^{\bar{p}}]-|\theta b_{\Delta}(Y_{\Delta}^{i,N}(t),Y_{\Delta}^{i,N}(t-\tau),\mu^{Y,N}(t))\Delta|^{\bar{p}}\\
\geq &(2^{2-2\bar{p}}-4^{\bar{p}-1}\theta^{\bar{p}}K_6^{\bar{p}}\Delta^{(1-\alpha) \bar{p}})|Y_{\Delta}^{i,N}(t)|^{\bar{p}}-(2^{1-\bar{p}}\lambda^{\bar{p}}+4^{\bar{p}-1}\theta^pK_6^{\bar{p}}\Delta^{(1-\alpha) \bar{p}})|Y_{\Delta}^{i,N}(t-\tau)|^{\bar{p}}\\
&-4^{\bar{p}-1}\theta^{\bar{p}}K_6^{\bar{p}}\Delta^{(1-\alpha) \bar{p}}\mathbb{W}_{2}^{\bar{p}}(\mu^{Y,N}(t),\delta_{0})-4^{\bar{p}-1}\theta^{\bar{p}}K_6^{\bar{p}}\Delta^{(1-\alpha) \bar{p}}.
\end{align*}
Thus, we obtain
\begin{align*}
&\sup_{0\leq u\leq t}\mathbb{E}|Y_{\Delta}^{i,N}(t)|^{\bar{p}}\leq (2^{2-2\bar{p}}-3\times 4^{\bar{p}-1}\theta^{\bar{p}}K_6^{\bar{p}}\Delta^{(1-\alpha) \bar{p}}-2^{1-\bar{p}}\lambda^{\bar{p}})^{-1}\\
&\times\bigg[\sup_{0\leq u\leq t}\mathbb{E}|\tilde{Y}_{\Delta}^{i,N}(u)|^{\bar{p}}+4^{\bar{p}-1}\theta^{\bar{p}}K_6^{\bar{p}}\Delta^{(1-\alpha) \bar{p}}+(2^{1-\bar{p}}\lambda^{\bar{p}}+4^{\bar{p}-1}\theta^{\bar{p}}K_6^{\bar{p}}\Delta^{(1-\alpha) \bar{p}})\mathbb{E}\|\xi\|^{\bar{p}}\bigg].
\end{align*}
Further, it follows from \eqref{20240730} that
\begin{align*}
\sup_{0\leq u\leq t}\mathbb{E}|Y_{\Delta}^{i,N}(u)|^{\bar{p}}\leq C+C\int_{0}^t\sup_{0\leq u\leq s}\mathbb{E}|Y_{\Delta}^{i,N}(u)|^{\bar{p}}{\rm d}s+C\int_{0}^t(t-s)^{2H-1}\bigg(\sup_{0\leq u\leq s}\mathbb{E}|Y_{\Delta}^{i,N}(u)|^{\bar{p}}\bigg){\rm d}s.
\end{align*}
Consequently, the desired result can be shown by applying Lemma \ref{Hlemma}. This completes the proof.
\end{proof}

 \subsection{Strong Convergence Rate}
\begin{lem}\label{discreteandcontinuoustamedconvergence}
{\rm Let Assumptions \ref{chushizhixidetiaojian}-\ref{sigmamanzudechushitiaojian} hold. For $p\ge2$, then}
\begin{align*}
 \mathbb{E}\bigg(\sup_{0\leq k\leq M-1}\sup_{t_{k}\leq t\leq t_{k+1}}|Y_{\Delta}^{i,N}(t)-Y_{\Delta}^{i,N}(t_k)|^p\bigg)\leq C\Delta^{[(1-\alpha)\wedge H]p},
\end{align*}
{\rm where $C$ is a positive constant independent of $\Delta$.}
\end{lem}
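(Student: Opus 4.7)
The plan is to exploit the fact that on each grid interval $[t_{k},t_{k+1})$ the step processes $\bar{Y}^{i,N}_{\Delta}$, $\bar{Y}^{i,N}_{\Delta+}$ and $\bar{\mu}^{Y,N}$ are constant in time, so the continuous-time formula \eqref{continuoustimetamedthetascheme} collapses to the explicit one-step representation
\begin{align*}
Y^{i,N}_{\Delta}(t)-Y^{i,N}_{\Delta}(t_{k})=(t-t_{k})\bigl[\theta\, b_{\Delta}^{k+1}+(1-\theta)\, b_{\Delta}^{k}\bigr]+\sigma(\mu^{Y,N}_{t_{k}})(B^{H,i}_{t}-B^{H,i}_{t_{k}}),
\end{align*}
where I abbreviate $b_{\Delta}^{j}:=b_{\Delta}(Y^{i,N}_{t_{j}},Y^{i,N}_{t_{j-m}},\mu^{Y,N}_{t_{j}})$. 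The neutral contribution $D(\bar{Y}^{i,N}_{\Delta}(t))-D(\bar{Y}^{i,N}_{\Delta}(t_{k}))$ vanishes on the half-open interval because $\bar{Y}^{i,N}_{\Delta}$ is piecewise constant there, so only the drift and diffusion increments survive.

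For the drift piece I would invoke the deterministic pointwise bound $|b_{\Delta}(x,y,\mu)|\le \Delta^{-\alpha}$ established in Lemma \ref{bdeltayoujie}. This immediately yields
\begin{align*}
\sup_{0\le k\le M-1}\sup_{t\in[t_{k},t_{k+1}]}(t-t_{k})\bigl|\theta\, b_{\Delta}^{k+1}+(1-\theta)\, b_{\Delta}^{k}\bigr|\le \Delta^{1-\alpha}
\end{align*}
with no expectation needed, so the drift part contributes $\Delta^{(1-\alpha)p}$ to the final bound and accounts for the $(1-\alpha)p$ exponent.

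For the diffusion piece I would freeze the $\mathcal{F}_{t_{k}}$-measurable factor $\sigma(\mu^{Y,N}_{t_{k}})$ and separate it from the fBm increment via H\"{o}lder's inequality. Assumption \ref{sigmamanzudechushitiaojian} combined with Lemma \ref{measuredistance} bounds $\|\sigma(\mu^{Y,N}_{t_{k}})\|$ by $K_{5}\bigl(1+(\tfrac{1}{N}\sum_{j=1}^{N}|Y^{j,N}_{t_{k}}|^{q})^{1/q}\bigr)$, whose $L^{r}$-norm is uniformly bounded in $k$ thanks to Lemma \ref{tamedmomentbound}. For the fBm increment I would use self-similarity, which gives
\begin{align*}
\mathbb{E}\sup_{t\in[t_{k},t_{k+1}]}|B^{H,i}_{t}-B^{H,i}_{t_{k}}|^{r}\le C\Delta^{rH}
\end{align*}
for any $r\ge 1$, uniformly in $k$. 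Combining these through H\"{o}lder then supplies the $\Delta^{Hp}$ factor on a single subinterval.

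The step I expect to be the main obstacle is handling the outer $\sup_{0\le k\le M-1}$: a naive union bound $\mathbb{E}\sup_{k}X_{k}\le\sum_{k}\mathbb{E}X_{k}$ would introduce an unwanted $M\asymp \Delta^{-1}$ factor into the diffusion contribution. The remedy I would adopt is to replace the outer supremum of the fBm increments by the global modulus of continuity $\sup_{|t-s|\le\Delta}|B^{H,i}_{t}-B^{H,i}_{s}|$ and invoke a Garsia--Rodemich--Rumsey estimate, which delivers an $L^{r}$-norm of order $\Delta^{H}$ up to logarithmic corrections that can be absorbed in $C$ (the condition $p>1/H$ is what lets one close the exponent by choosing $r$ large enough). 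Multiplying this by the moment bound on $\sup_{k}\|\sigma(\mu^{Y,N}_{t_{k}})\|$, obtained by a short bootstrap from Lemma \ref{tamedmomentbound}, and adding the deterministic drift estimate above, yields the claimed $C\Delta^{[(1-\alpha)\wedge H]p}$.
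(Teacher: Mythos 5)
Your handling of the drift and diffusion increments is essentially the paper's (drift bounded pathwise by $\Delta^{1-\alpha}$ via Lemma \ref{bdeltayoujie}, diffusion of order $\Delta^{pH}$ via Lemma \ref{tamedmomentbound} and the factorization estimate \eqref{fbmjifen}), but the term you dispose of in one line --- the neutral term --- is exactly where the paper's proof does its real work, and your disposal of it is not valid. The increment being estimated is that of the continuous interpolant $Y_\Delta^{i,N}$, which is tied to the integral representation \eqref{tildeYdelta} through $\tilde Y_\Delta^{i,N}(t)=Y_\Delta^{i,N}(t)-D\bigl(Y_\Delta^{i,N}(t-\tau)\bigr)-\theta b_\Delta(\cdots)\Delta$; the neutral term here is evaluated at the continuously varying delayed time $t-\tau$, not at a frozen grid point, so its increment over $[t_k,t_{k+1}]$ is $D(Y_\Delta^{i,N}(t-\tau))-D(Y_\Delta^{i,N}(t_k-\tau))$, which is of the same order as the increment of $Y_\Delta^{i,N}$ one delay earlier and does not vanish. (Even reading \eqref{continuoustimetamedthetascheme} literally with the step function inside $D$, your one-step formula holds only on the half-open interval, while the closed-interval supremum in the statement contains the right endpoint, i.e.\ the discrete increment $|Y^{i,N}_{t_{k+1}}-Y^{i,N}_{t_k}|$, at which the neutral step function jumps by $D(Y^{i,N}_{t_{k+1-m}})-D(Y^{i,N}_{t_{k-m}})$.) The paper resolves this with the lower bound \eqref{tildeYbudengshi}, the contraction constant $\lambda<1$ of Assumption \ref{zhonglixiangdetiaojian}, the H\"{o}lder continuity of the initial segment (Assumption \ref{chushizhixidetiaojian}) for the base case $t-\tau<0$, and an induction over the earlier grid intervals; none of this appears in your argument, and without it the claimed bound on $|Y_\Delta^{i,N}(t)-Y_\Delta^{i,N}(t_k)|$ does not follow from the bound on $|\tilde Y_\Delta^{i,N}(t)-\tilde Y_\Delta^{i,N}(t_k)|$. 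You must also record the $\theta b_\Delta\Delta$ correction separating $Y_\Delta^{i,N}$ from $\tilde Y_\Delta^{i,N}$ (it contributes another $C\Delta^{(1-\alpha)p}$, harmless but not negligible a priori).

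Two smaller points. Your concern about the outer $\sup_{0\le k\le M-1}$ is legitimate --- the paper only estimates each subinterval separately and is silent on the interchange --- but your fix does not quite close: the Garsia--Rodemich--Rumsey/Fernique bound for $\sup_{|t-s|\le\Delta}|B^{H,i}_t-B^{H,i}_s|$ carries a factor $\sqrt{\log(1/\Delta)}$ that depends on $\Delta$ and so cannot be absorbed into a $\Delta$-independent constant; you would have to concede an arbitrarily small $\varepsilon$ in the exponent. Also, replacing $\int_{t_k}^t\sigma(\bar\mu^{Y,N}(s))\,{\rm d}B_s^{H,i}$ by the ordinary product $\sigma(\mu^{Y,N}_{t_k})(B^{H,i}_t-B^{H,i}_{t_k})$ is not automatic in the Wick--It\^{o} framework the paper uses, since the integrand is random; this is why the paper bounds the stochastic integral directly by the technique of \eqref{fbmjifen} rather than by splitting off the integrand.
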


\begin{proof}
For $t\in[t_k,t_{k+1}]$, by the elementary inequality, we may compute
\begin{align*}
&\mathbb{E}\bigg(\sup_{t_{k}\leq t\leq t_{k+1}} |\tilde{Y}_{\Delta}^{i,N}(t)-\tilde{Y}_{\Delta}^{i,N}(t_k)|^p\bigg)\\
=&\mathbb{E}\bigg(\sup_{t_{k}\leq t\leq t_{k+1}}\bigg\lvert \int_{t_k}^tb_{\Delta}(\bar{Y}_{\Delta}^{i,N}(s),\bar{Y}_{\Delta}^{i,N}(s-\tau),\bar{\mu}^{Y,N}(s)){\rm d}s+\int_{t_k}^t\sigma(\bar{\mu}^{Y,N}(s)){\rm d}B_{s}^{H,i}\bigg\rvert^p\bigg)\\
\leq &2^{p-1}\mathbb{E}\bigg(\sup_{t_{k}\leq t\leq t_{k+1}}\bigg\lvert \int_{t_k}^tb_{\Delta}(\bar{Y}_{\Delta}^{i,N}(s),\bar{Y}_{\Delta}^{i,N}(s-\tau),\bar{\mu}^{Y,N}(s)){\rm d}s\bigg\rvert^p\bigg)\\
&+2^{p-1}\mathbb{E}\bigg(\sup_{t_{k}\leq t\leq t_{k+1}}\bigg\lvert \int_{t_k}^t\sigma(\bar{\mu}^{Y,N}(s)){\rm d}B_{s}^{H,i}\bigg\rvert^p\bigg)\\
:=&W_{1}(t)+W_{2}(t).
\end{align*}
With the H\"{o}lder inequality and Lemmas \ref{bdeltayoujie}, \ref{tamedmomentbound}, it follows
\begin{align*}
W_{1}(t)=&2^{p-1}\mathbb{E}\bigg(\sup_{t_{k}\leq t\leq t_{k+1}}\bigg\lvert \int_{t_k}^tb_{\Delta}(\bar{Y}_{\Delta}^{i,N}(s),\bar{Y}_{\Delta}^{i,N}(s-\tau),\bar{\mu}^{Y,N}(s)){\rm d}s\bigg\rvert^p\bigg)\\
\leq &2^{p-1}\Delta^{p-1}\mathbb{E}\int_{t_k}^{t_{k+1}}|b_{\Delta}(\bar{Y}_{\Delta}^{i,N}(s),\bar{Y}_{\Delta}^{i,N}(s-\tau),\bar{\mu}^{Y,N}(s))|^p{\rm d}s\\
\leq &2^{p-1}\Delta^{p-1}\mathbb{E}\int_{t_k}^{t_{k+1}}K_{6}\Delta^{-\alpha p}(1+|\bar{Y}_{\Delta}^{i,N}(s)|+|\bar{Y}_{\Delta}^{i,N}(s-\tau)|+\mathbb{W}_{2}^p(\bar{\mu}^{Y,N}(s),\delta_{0})){\rm d}s\\
\leq &C\Delta^{(1-\alpha)p}+C\Delta^{(1-\alpha)p}\mathbb{E}\int_{t_k}^{t_{k+1}}(|\bar{Y}_{\Delta}^{i,N}(s)|^p+|\bar{Y}_{\Delta}^{i,N}(s-\tau)|^p){\rm d}s\\
\leq &C\Delta^{(1-\alpha)p}.
\end{align*}
Using the same techniques as the proof process of \cite[Theorem 3.1]{FHS22}, along with the H\"{o}lder inequality, the Fubini theorem and Lemmas \ref{sigmamanzudechushitiaojian}, \ref{tamedmomentbound}, we derive
\begin{align*}
 W_{2}(t)=&2^{p-1}\mathbb{E}\bigg(\sup_{t_{k}\leq t\leq t_{k+1}}\bigg\lvert \int_{t_k}^t\sigma(\bar{\mu}^{Y,N}(s)){\rm d}B_{s}^{H,i}\bigg\rvert^p\bigg)\\
\leq &C \Delta^{pH-1}\mathbb{E}\int_{t_{k}}^{t_{k+1}}(1+\mathbb{W}_{2}(\bar{\mu}_{s}^{Y,N},\delta_{0}))^p{\rm d}s
\leq C\Delta^{pH}.
\end{align*}
Recalling that $\alpha\in(0,1/2]$, then we conclude
\begin{align*}
\mathbb{E}\bigg(\sup_{t_{k}\leq t\leq t_{k+1}} |\tilde{Y}_{\Delta}^{i,N}(t)-\tilde{Y}_{\Delta}^{i,N}(t_k)|^p\bigg)\leq C\Delta^{[(1-\alpha )\wedge H]p}.
\end{align*}
Denote $\tilde{D}(t,t_k):=D(Y_{\Delta}^{i,N}(t-\tau))-D(Y_{\Delta}^{i,N}(t_k-\tau))$, and $\tilde{b}(t,t_k):=b_{\Delta}(Y_{\Delta}^{i,N}(t),Y_{\Delta}^{i,N}(t-\tau),\mu^{Y,N}(t))-b_{\Delta}(Y_{\Delta}^{i,N}(t_k),Y_{\Delta}^{i,N}(t_k-\tau),\mu^{Y,N}(t_{k}))$, together with Assumption \ref{zhonglixiangdetiaojian}
\begin{align}\label{tildeYbudengshi}
 &|\tilde{Y}_{\Delta}^{i,N}(t)-\tilde{Y}_{\Delta}^{i,N}(t_k)|^p\\
 \geq & 2^{1-p}|Y_{\Delta}^{i,N}(t)-Y_{\Delta}^{i,N}(t_k)-\tilde{D}(t,t_k)|^p-\theta^p\Delta^p|\tilde{b}(t,t_k)|^p\notag\\
 \geq &2^{2-2p}|Y_{\Delta}^{i,N}(t)-Y_{\Delta}^{i,N}(t_k)|^p-2^{1-p}\lambda^p|Y_{\Delta}^{i,N}(t-\tau)-Y_{\Delta}^{i,N}(t_k-\tau)|^p-\theta^p\Delta^p|\tilde{b}(t,t_k)|^p.\notag
\end{align}
For $0\leq t<t_1=\Delta$, then $t-\tau<t_1-\tau<0$, we derive from \eqref{tildeYbudengshi} and Lemma \ref{tamedmomentbound},
\begin{align*}
&\mathbb{E}\bigg(\sup_{0\leq t\leq t_1}|Y_{\Delta}^{i,N}(t)-Y_{\Delta}^{i,N}(t_0)|^p\bigg)
\leq C\mathbb{E}\bigg(\sup_{0\leq t\leq t_1}|\tilde{Y}_{\Delta}^{i,N}(t)-\tilde{Y}_{\Delta}^{i,N}(t_0)|^p\bigg)+C\Delta^{(1-\alpha)p}\leq C\Delta^{[(1-\alpha)\wedge H]p}.
\end{align*}
Similarly, for $t_1\leq t<t_2$, we obtain
\begin{align*}
&\mathbb{E}\bigg(\sup_{t_1\leq t\leq t_2}|Y_{\Delta}^{i,N}(t)-Y_{\Delta}^{i,N}(t_1)|^p\bigg)\\
\leq & C\mathbb{E}\bigg(\sup_{t_1\leq t\leq t_2}|\tilde{Y}_{\Delta}^{i,N}(t)-\tilde{Y}_{\Delta}^{i,N}(t_1)|^p\bigg)+C\mathbb{E}\bigg(\sup_{0\leq t\leq (t_{2-m}\vee0)}|\tilde{Y}_{\Delta}^{i,N}(t)-\tilde{Y}_{\Delta}^{i,N}(t_{1-m})|^p\bigg)+C\Delta^{(1-\alpha)p}\\
\leq &C\Delta^{[(1-\alpha)\wedge H]p}.
\end{align*}
By induction, the desired assertion can be shown.
\end{proof}

\begin{lem}\label{strongconvergenceofxiN-YiN}
{\rm Let Assumptions \ref{chushizhixidetiaojian}-\ref{sigmamanzudechushitiaojian} hold.  Fix any $\bar{p}>4(l+1)$ in Lemma \ref{tamedmomentbound}. Then it holds that for $2\le p<\bar{p}/2(l+1)$,
\begin{align*}
\mathbb{E}\bigg(\sup_{0\leq t\leq T}|X_t^{i,N}-Y_{\Delta}^{i,N}(t)|^p\bigg)\leq C\Delta^{\alpha p},
\end{align*}
where $C$ is a positive constant independent of $\Delta$.  }
\end{lem}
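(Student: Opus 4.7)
My approach mirrors the uniqueness argument of Theorem~3.1, but carries along the extra discretisation residues generated by the tamed theta scheme. Introduce
\begin{align*}
\Psi_t := \bigl[X_t^{i,N} - D(X_{t-\tau}^{i,N})\bigr] - \tilde Y_{\Delta}^{i,N}(t),
\end{align*}
which by \eqref{interactingparticlesystem} and \eqref{tildeYdelta} satisfies
\begin{align*}
d\Psi_t = \bigl[b(X_t^{i,N},X_{t-\tau}^{i,N},\mu_t^{X,N}) - b_\Delta(\bar Y_\Delta^{i,N}(t),\bar Y_\Delta^{i,N}(t-\tau),\bar\mu^{Y,N}(t))\bigr]dt + \bigl[\sigma(\mu_t^{X,N}) - \sigma(\bar\mu^{Y,N}(t))\bigr]dB_t^{H,i}.
\end{align*}
I apply the fractional It\^o formula from Remark~\ref{Itoformula} to $|\Psi_t|^p$; the resulting drift inner product is split by inserting $\pm\,b_\Delta(X_s^{i,N},X_{s-\tau}^{i,N},\mu_s^{X,N})$. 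The ``taming defect'' piece $\langle\Psi_s, b-b_\Delta\rangle$ is bounded via Young's inequality together with the second estimate of Lemma~\ref{B2} and the moment bounds in Lemmas~\ref{propagationyoujie} and~\ref{tamedmomentbound}, producing the target residue of order $\Delta^{\alpha p}$; this is where the hypothesis $\bar p>2(l+1)p$ enters, since $|b-b_\Delta|^p$ involves the $2(l+1)p$-th moment.

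For the remaining monotone piece $\langle\Psi_s, b_\Delta(X) - b_\Delta(\bar Y)\rangle$ I decompose
\begin{align*}
\Psi_s = \bigl[X_s^{i,N} - D(X_{s-\tau}^{i,N}) - \bar Y_\Delta^{i,N}(s) + D(\bar Y_\Delta^{i,N}(s-\tau))\bigr] + R_s,
\end{align*}
where $R_s$ gathers the $\bar Y_\Delta - Y_\Delta$ gaps (and their $D$-images) together with the $\theta b_\Delta(\cdot)\Delta$ correction. The bracketed part fits the monotonicity estimate of Lemma~\ref{B2} directly, yielding a bound by $|X_s^{i,N}-\bar Y_\Delta^{i,N}(s)|^2+|X_{s-\tau}^{i,N}-\bar Y_\Delta^{i,N}(s-\tau)|^2+\mathbb{W}_q(\mu_s^{X,N},\bar\mu^{Y,N}(s))^2$; by Lemma~\ref{measuredistance} and exchangeability the Wasserstein piece reduces to $\mathbb{E}|X_s^{i,N} - Y_\Delta^{i,N}(s)|^p$ plus a $|Y_\Delta - \bar Y_\Delta|$ discrepancy. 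The $R_s$-term and the $|Y_\Delta - \bar Y_\Delta|$ discrepancies are controlled by Lemma~\ref{discreteandcontinuoustamedconvergence}, Assumption~\ref{zhonglixiangdetiaojian} and Lemma~\ref{bdeltayoujie}, all of order at most $\Delta^{[(1-\alpha)\wedge H]p}\vee\Delta^{(1-\alpha)p}$; since $\alpha\leq 1/2\leq 1-\alpha$ and $\alpha\leq 1/2<H$, each is majorised by $\Delta^{\alpha p}$ and absorbed into the final rate. The quadratic diffusion term and the fBm stochastic integral produced by the It\^o formula are treated via Assumption~\ref{sigmamanzudechushitiaojian}, Lemma~\ref{measuredistance} and the H\"older/Fubini technique used in~\eqref{fbmjifen}, exactly as the terms $J_2,J_3$ in the uniqueness proof.

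Collecting everything and taking $\sup_{s\leq t}\mathbb{E}$, I obtain
\begin{align*}
\mathbb{E}\Bigl(\sup_{s\leq t}|\Psi_s|^p\Bigr) \leq C\Delta^{\alpha p} + C\int_0^t \mathbb{E}\Bigl(\sup_{u\leq s}|X_u^{i,N} - Y_\Delta^{i,N}(u)|^p + \sup_{u\leq s}|X_{u-\tau}^{i,N} - Y_\Delta^{i,N}(u-\tau)|^p\Bigr)ds.
\end{align*}
I then translate $|\Psi_t|$ back to $|X_t^{i,N} - Y_\Delta^{i,N}(t)|$ via Mao's inequality (Lemma~\ref{Maodebudengshi}) applied to $X_t^{i,N} - Y_\Delta^{i,N}(t) = \Psi_t + D(X_{t-\tau}^{i,N}) - D(Y_\Delta^{i,N}(t-\tau)) - \theta b_\Delta(\cdot)\Delta$, using Assumption~\ref{zhonglixiangdetiaojian} together with $\lambda<1$ to absorb the neutral delay term and Lemma~\ref{bdeltayoujie} to bound the $\theta$-correction by a multiple of $\Delta^{(1-\alpha)p}\leq\Delta^{\alpha p}$. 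Since $X_s^{i,N}=Y_\Delta^{i,N}(s)=\xi^i(s)$ on $[-\tau,0]$, the delayed supremum vanishes on $[0,\tau]$, so Gronwall's inequality delivers the $\Delta^{\alpha p}$ bound there; induction on the intervals $[k\tau,(k+1)\tau]$ for $k=0,1,\dots,\lfloor T/\tau\rfloor$ extends it to $[0,T]$. The main technical obstacle is the careful bookkeeping of the three ``gap'' objects $Y_\Delta$ vs.\ $\bar Y_\Delta$ vs.\ $\tilde Y_\Delta$, together with the $\theta b_\Delta(\cdot)\Delta$ correction, so that every residue enters with a rate no worse than $\Delta^{\alpha p}$; this is the crux where Lemmas~\ref{bdeltayoujie}, \ref{B2}, \ref{tamedmomentbound} and~\ref{discreteandcontinuoustamedconvergence} must cooperate precisely.
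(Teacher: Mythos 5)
Your architecture is essentially the one the paper uses: the paper works with $I(t)=\tilde Y_{\Delta}^{i,N}(t)-X_t^{i,N}+D(X_{t-\tau}^{i,N})=-\Psi_t$, applies the It\^o formula to $|I|^p$, extracts a taming defect of order $\Delta^{\alpha p}$ from the second estimate of Lemma \ref{B2} (using that $\bar p>2(l+1)p$ supplies the needed moments), controls the $\bar Y_{\Delta}$ versus $Y_{\Delta}$ discrepancies with Lemma \ref{discreteandcontinuoustamedconvergence}, treats the diffusion and the stochastic integral as in \eqref{fbmjifen}, and converts $|I|$ back to $|X^{i,N}-Y_{\Delta}^{i,N}|$ by exactly the lower bound you describe before applying Gronwall (your induction over delay intervals is an equivalent substitute for the paper's one-shot Gronwall). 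The one genuine divergence is where the intermediate drift terms are inserted: the paper splits $b_{\Delta}(\bar Y)-b(X)$ as $(b_{\Delta}(\bar Y)-b(\bar Y))+(b(\bar Y)-b(Y))+(b(Y)-b(X))$, so the one-sided condition of Assumption \ref{danbiantiaojian} is applied to $b$ between $Y_{\Delta}^{i,N}$ and $X^{i,N}$, whose arguments match those of $I$ up to the explicit $\theta b_{\Delta}\Delta$ correction; you instead split as $(b(X)-b_{\Delta}(X))+(b_{\Delta}(X)-b_{\Delta}(\bar Y))$ and invoke the monotonicity of $b_{\Delta}$ between $X^{i,N}$ and $\bar Y_{\Delta}^{i,N}$, which is what forces the decomposition $\Psi_s=[\cdots]+R_s$. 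Taking the taming defect at $X$ (with Lemma \ref{propagationyoujie}) rather than at $\bar Y$ (with Lemma \ref{tamedmomentbound}) is immaterial.

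The one place where your bookkeeping is too optimistic as stated is the cross term $\int_0^t|\Psi_s|^{p-2}\langle R_s,\,b_{\Delta}(X)-b_{\Delta}(\bar Y)\rangle\,{\rm d}s$. You claim it is of order $\Delta^{[(1-\alpha)\wedge H]p}$, but Young's inequality can only place the power $p/2$ on the product $|R_s|\,|b_{\Delta}(X)-b_{\Delta}(\bar Y)|$, and the second factor is merely $O(1)$ in $L^p$ (or $O(\Delta^{-\alpha})$ pointwise via Lemma \ref{bdeltayoujie}); the naive estimate therefore yields only $\Delta^{[(1-\alpha)\wedge H]p/2}$, which for $\alpha$ near $1/2$ is strictly worse than the target $\Delta^{\alpha p}$. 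To recover the full rate you must additionally expand $b_{\Delta}(X)-b_{\Delta}(\bar Y)$ by the polynomial Lipschitz bound, return the part proportional to $|X^{i,N}-Y_{\Delta}^{i,N}|$ to the Gronwall integrand via $ab\le a^2+b^2$ after Cauchy--Schwarz, and keep only genuinely small residues at the full power $p$. The paper sidesteps this because its intermediate term $b(\bar Y)-b(Y)$ is itself small, so the full $p$-th power lands on a small quantity; its own residual cross term $\theta\Delta\langle b_{\Delta}(Y),b(Y)-b(X)\rangle$ in $H_3$ needs the same repair, so your route is no worse in kind, but the step should be spelled out rather than asserted.
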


 \begin{proof}
For $t\in[0,T]$, denote $\Lambda(t)=\tilde{Y}_{\Delta}^{i,N}(t)-X_t^{i,N}+D(X_{t-\tau}^{i,N})$, then
 \begin{align*}
\Lambda(t)=&\Lambda(0)+\int_{0}^t(b_{\Delta}(\bar{Y}_{\Delta}^{i,N}(s),\bar{Y}_{\Delta}^{i,N}(s-\tau),\bar{\mu}^{Y,N}(s))-b(X_{s}^{i,N},X_{s-\tau}^{i,N},\mu_{s}^{X,N})){\rm d}s\\
&+\int_{0}^t(\sigma(\bar{\mu}^{Y,N}(s))-\sigma(\mu_{s}^{X,N})){\rm d}B_{s}^{H,i},
 \end{align*}
 where $\Lambda({0})=-\theta b_{\Delta}(\xi(0),\xi(-\tau),\delta_{0})\Delta$. The fractional It\^{o} formula yields,
 \begin{align*}
 |\Lambda(t)|^p
 \leq &|\Lambda(0)|^p\\
 &+p\int_{0}^t|\Lambda(s)|^{p-2}\langle \Lambda(s),b_{\Delta}(\bar{Y}_{\Delta}^{i,N}(s),\bar{Y}_{\Delta}^{i,N}(s-\tau),\bar{\mu}^{Y,N}(s))-b(X_{s}^{i,N},X_{s-\tau}^{i,N},\mu_{s}^{X,N})\rangle{\rm d}s\\
 &+H(2H-1)p(p-1)\int_{0}^t|\Lambda(s)|^{p-2}\int_{0}^s(s-r)^{2H-2}\|\sigma(\bar{\mu}^{Y,N}(r))-\sigma(\mu_{r}^{X,N})\|^2{\rm d}r{\rm d}s\\
 &+p\int_{0}^t|\Lambda(s)|^{p-2}\langle \Lambda(s),\sigma(\bar{\mu}^{Y,N}(s))-\sigma(\mu_{s}^{X,N})\rangle{\rm d}B_{s}^{H,i}\\
 \leq &|\Lambda(0)|^p+\sum_{i=1}^6Z_{i}(t),
 \end{align*}
 where
 \begin{align*}
 Z_{1}(t)=&p\int_{0}^t|\Lambda(s)|^{p-2}\langle \Lambda(s),b_{\Delta}(\bar{Y}_{\Delta}^{i,N}(s),\bar{Y}_{\Delta}^{i,N}(s-\tau),\bar{\mu}^{Y,N}(s))\\
 &-b(\bar{Y}_{\Delta}^{i,N}(s),\bar{Y}_{\Delta}^{i,N}(s-\tau),\bar{\mu}^{Y,N}(s))\rangle{\rm d}s\\
 Z_{2}(t)=&p\int_{0}^t|\Lambda(s)|^{p-2}\langle \Lambda(s),b(\bar{Y}_{\Delta}^{i,N}(s),\bar{Y}_{\Delta}^{i,N}(s-\tau),\bar{\mu}^{Y,N}(s))\\
 &-b(Y_{\Delta}^{i,N}(s),Y_{\Delta}^{i,N}(s-\tau),\mu^{Y,N}(s))\rangle{\rm d}s\\
 Z_{3}(t)=&p\int_{0}^t|\Lambda(s)|^{p-2}\langle \Lambda(s),b(Y_{\Delta}^{i,N}(s),Y_{\Delta}^{i,N}(s-\tau),\mu^{Y,N}(s))-b(X_{s}^{i,N},X_{s-\tau}^{i,N},\mu_{s}^{X,N})\rangle{\rm d}s,\\
 Z_{4}(t)=&H(2H-1)p(p-1)\int_{0}^t|\Lambda(s)|^{p-2}\int_{0}^s(s-r)^{2H-2}\|\sigma(\bar{\mu}^{Y,N}(r))-\sigma(\mu^{Y,N}(r))\|^2{\rm d}r{\rm d}s\\
 Z_{5}(t)=&H(2H-1)p(p-1)\int_{0}^t|\Lambda(s)|^{p-2}\int_{0}^s(s-r)^{2H-2}\|\sigma(\mu^{Y,N}(r))-\sigma(\mu_{r}^{X,N})\|^2{\rm d}r{\rm d}s\\
 Z_{6}(t)=&p\int_{0}^t|\Lambda(s)|^{p-2}\langle \Lambda(s),\sigma(\bar{\mu}^{Y,N}(s))-\sigma(\mu_{s}^{X,N})\rangle{\rm d}B_{s}^{H,i}.
 \end{align*}
 By Assumption \ref{zhonglixiangdetiaojian}, Lemmas \ref{bdeltayoujie}, \ref{B2}, \ref{tamedmomentbound}, and the Young's inequality, the H\"{o}lder inequality, we get
 \begin{align*}
   &\mathbb{E}\bigg(\sup_{0\leq u\leq t}Z_{1}(u)\bigg)\\
   \leq& C\mathbb{E}\int_{0}^t|\Lambda(s)|^p{\rm d}s+C\mathbb{E}\int_{0}^t|b_{\Delta}(\bar{Y}_{\Delta}^{i,N}(s),\bar{Y}_{\Delta}^{i,N}(s-\tau),\bar{\mu}^{Y,N}(s))\\
   &-b(\bar{Y}_{\Delta}^{i,N}(s),\bar{Y}_{\Delta}^{i,N}(s-\tau),\bar{\mu}^{Y,N}(s))|^p{\rm d}s\\
 \leq &C\mathbb{E}\int_{0}^t\bigg(|Y_{\Delta}^{i,N}(s)-X_{s}^{i,N}|^p+|Y_{\Delta}^{i,N}(s-\tau)-X_{s-\tau}^{i,N}|^p\\
 &+\theta^p\Delta^p|b_{\Delta}(Y_{\Delta}^{i,N}(s),Y_{\Delta}^{i,N}(s-\tau),\mu^{Y,N}(s))|^p\bigg){\rm d}s\\
 &+C\Delta^{\alpha p}\mathbb{E}\int_{0}^t(1+|\bar{Y}_{\Delta}^{i,N}(s)|^{2(l+1)p}+|\bar{Y}_{\Delta}^{i,N}(s-\tau)|^{2(l+1)p}+\mathbb{W}_{2}^{2p}(\bar{\mu}^{Y,N},\delta_0)){\rm d}s\\
 \leq &C\int_{0}^t\mathbb{E}\left(\sup_{0\leq u\leq s}|Y_{\Delta}^{i,N}(u)-X_{u}^{i,N}|^p\right){\rm d}s+C\Delta^{(1-\alpha)p}+C\Delta^{\alpha p},
 \end{align*}
where for the last term we used the fact that $2(l+1)p<\bar{p}$. Combining the elementary inequality, the Young's inequality, the H\"{o}lder inequality, Assumptions \ref{zhonglixiangdetiaojian}, \ref{danbiantiaojian} and Lemmas \ref{bdeltayoujie}, \ref{tamedmomentbound}, \ref{discreteandcontinuoustamedconvergence}, it follows
 \begin{align*}
 &\mathbb{E}\bigg(\sup_{0\leq u\leq t}Z_{2}(u)\bigg)\\
   \leq &C\mathbb{E}\int_{0}^t|\Lambda(s)|^p{\rm d}s+C\mathbb{E}\int_{0}^t
   |b(\bar{Y}_{\Delta}^{i,N}(s),\bar{Y}_{\Delta}^{i,N}(s-\tau),\bar{\mu}^{Y,N}(s))-b(Y_{\Delta}^{i,N}(s),Y_{\Delta}^{i,N}(s-\tau),\mu^{Y,N}(s))|^p{\rm d}s\\
   \leq &C\mathbb{E}\int_{0}^t\bigg(|Y_{\Delta}^{i,N}(s)-X_{s}^{i,N}|^p+|Y_{\Delta}^{i,N}(s-\tau)-X_{s-\tau}^{i,N}|^p+\theta^p\Delta^p|b_{\Delta}(Y_{\Delta}^{i,N}(s),Y_{\Delta}^{i,N}(s-\tau),\mu^{Y,N}(s))|\bigg){\rm d}s\\
 &+C\mathbb{E}\int_{0}^t\bigg[(1+|\bar{Y}_{\Delta}^{i,N}(s)|^l+|\bar{Y}_{\Delta}^{i,N}(s-\tau)|^l+|Y_{\Delta}^{i,N}(s)|^l+|Y_{\Delta}^{i,N}(s-\tau)|^l)^p\\
 &\times(|\bar{Y}_{\Delta}^{i,N}(s)-Y_{\Delta}^{i,N}(s)|+|\bar{Y}_{\Delta}^{i,N}(s-\tau)-Y_{\Delta}^{i,N}(s-\tau)|+\mathbb{W}_2^p(\bar{\mu}^{Y,N}(s),\mu^{Y,N})(s))\bigg]{\rm d}s\\
 \leq&C\mathbb{E}\int_{0}^t\bigg(|Y_{\Delta}^{i,N}(s)-X_{s}^{i,N}|^p+|Y_{\Delta}^{i,N}(s-\tau)-X_{s-\tau}^{i,N}|^p\bigg){\rm d}s+C\Delta^{(1-\alpha)p}\\
 &+C\int_{0}^t[\mathbb{E}(1+|\bar{Y}_{\Delta}^{i,N}(s)|^l+|\bar{Y}_{\Delta}^{i,N}(s-\tau)|^l+|Y_{\Delta}^{i,N}(s)|^l+|Y_{\Delta}^{i,N}(s-\tau)|^l)^{2p}]^{1/2}\\
 &\times[\mathbb{E}(|\bar{Y}_{\Delta}^{i,N}(s)-Y_{\Delta}^{i,N}(s)|+|\bar{Y}_{\Delta}^{i,N}(s-\tau)-Y_{\Delta}^{i,N}(s-\tau)|)^{2p}]^{1/2}{\rm d}s\\
 &+C\int_{0}^t\mathbb{E}(|\bar{Y}_{\Delta}^{i,N}(s)-Y_{\Delta}^{i,N}(s)|)^p{\rm d}s\\
 \leq &C\int_{0}^t\mathbb{E}\left(\sup_{0\leq u\leq s}|Y_{\Delta}^{i,N}(u)-X_{u}^{i,N}|^p\right){\rm d}s+C\Delta^{(1-\alpha)p}+C\Delta^{[(1-\alpha)\wedge H]p}.
 \end{align*}
 Again, by Assumption \ref{danbiantiaojian} and Lemmas \ref{bdeltayoujie}, \ref{tamedmomentbound}
 \begin{align*}
 &\mathbb{E}\bigg(\sup_{0\leq u\leq t}Z_{3}(u)\bigg)\\
 \leq &C\mathbb{E}\int_{0}^t|\Lambda(s)|^{p-2}[|Y_{\Delta}^{i,N}(s)-X_{s}^{i,N}|^2+|Y_{\Delta}^{i,N}(s-\tau)-X_{s-\tau}^{i,N}|^2+\mathbb{W}_2^2(\mu^{Y,N}(s),\mu^{X,N}(s))]{\rm d}s\\
 &+C\mathbb{E}\int_{0}^t|\Lambda(s)|^{p-2}|\theta b_{\Delta}(Y_{\Delta}^{i,N}(s),Y_{\Delta}^{i,N}(s-\tau),\mu^{Y,N}(s))\Delta|\\
 &\cdot|b(Y_{\Delta}^{i,N}(s),Y_{\Delta}^{i,N}(s-\tau),\mu^{Y,N}(s))-b(X_{s}^{i,N},X_{s-\tau}^{i,N},\mu_{s}^{X,N})|{\rm d}s\\
 \leq &C\mathbb{E}\int_{0}^t[|Y_{\Delta}^{i,N}(s)-X_{s}^{i,N}|^p+\theta^p\Delta^p|b_{\Delta}(Y_{\Delta}^{i,N}(s),Y_{\Delta}^{i,N}(s-\tau),\mu^{Y,N}(s))|^p]{\rm d}s\\
 &+C\Delta^{1-\alpha}\mathbb{E}\int_{0}^t|\Lambda(s)|^{p-2}[(1+|Y_{\Delta}^{i,N}(s)|^l+|Y_{\Delta}^{i,N}(s-\tau)|^l+|X_{s}^{i,N}|^l+|X_{s-\tau}^{i,N}|^l)\\
 &\times(|Y_{\Delta}^{i,N}(s)-X_{s}^{i,N}|+|Y_{\Delta}^{i,N}(s-\tau)-X_{s-\tau}^{i,N}|)+\mathbb{W}_2(\mu^{Y,N}(s),\mu_{s}^{X,N})]{\rm d}s\\
 \leq &C\int_{0}^t\mathbb{E}\left(\sup_{0\leq u\leq s}|Y_{\Delta}^{i,N}(u)-X_{u}^{i,N}|^p\right){\rm d}s+C\Delta^{(1-\alpha)(1+p)}.
 \end{align*}
Further, the Young's inequality, Assumption \ref{sigmamanzudechushitiaojian}, and Lemmas \ref{bdeltayoujie}, \ref{discreteandcontinuoustamedconvergence} lead to
\begin{align*}
&\mathbb{E}\bigg(\sup_{0\leq u\leq t}Z_{4}(u)\bigg)+\mathbb{E}\bigg(\sup_{0\leq u\leq t}Z_{5}(u)\bigg)\\
\leq&C\mathbb{E}\int_{0}^t|\Lambda(s)|^p{\rm d}s+C\mathbb{E}\int_{0}^t(t-s)^{2H-1}\mathbb{W}_2^p(\bar{\mu}^{Y,N}(s),\mu^{Y,N}(s)){\rm d}s\\
&+C\mathbb{E}\int_{0}^t(t-s)^{2H-1}\mathbb{W}_2^p(\mu^{Y,N}(s),\mu_{s}^{X,N}){\rm d}s\\
\leq &C\int_{0}^t\mathbb{E}\left(\sup_{0\leq u\leq s}|Y_{\Delta}^{i,N}(u)-X_{u}^{i,N}|^p\right){\rm d}s+C\int_{0}^t(t-s)^{2H-1}\mathbb{E}\bigg(\sup_{0\leq u\leq s}|Y_{\Delta}^{i,N}(u)-X_{u}^{i,N}|^p\bigg){\rm d}s\\
&+C\Delta^{(1-\alpha)p}+C\Delta^{[(1-\alpha)\wedge H]p}.
\end{align*}
In the similar way as achieving \eqref{J3}, the Young's inequality results in
\begin{align*}
&\mathbb{E}\bigg(\sup_{0\leq u\leq t}|Z_{6}(u)|^p\bigg)\le C\Delta^{(1-\alpha)p}+C\Delta^{[(1-\alpha)\wedge H]p}\\
&+C\epsilon\mathbb{E}\bigg(\sup_{0\leq u\leq t}|\Lambda(u)|^p\bigg)+C\int_{0}^t\mathbb{E}\left(\sup_{0\leq u\leq s}|Y_{\Delta}^{i,N}(u)-X_{u}^{i,N}|^p\right){\rm d}s.
\end{align*}
Collecting the above estimates $Z_{1}-Z_{6}$ together
\begin{align*}
 \mathbb{E}\bigg(\sup_{0\leq u\leq t}|\Lambda(u)|^p\bigg)\leq& C\int_{0}^t\mathbb{E}\bigg(\sup_{0\leq u\leq s}|Y_{\Delta}^{i,N}(u)-X_{u}^{i,N}|^p\bigg){\rm d}s\\
 &+C\int_{0}^t(t-s)^{2H-1}\mathbb{E}\left(\sup_{0\leq u\leq s}|Y_{\Delta}^{i,N}(u)-X_{u}^{i,N}|^p\right){\rm d}s+C\Delta^{\alpha p}.
\end{align*}
Recalling the definition of $\Lambda(t)$ and using Assumption \ref{zhonglixiangdetiaojian}, we may compute
\begin{align*}
  |\Lambda(t)|^p\geq& 2^{1-p}|Y_{\Delta}^{i,N}(t)-X_{t}^{i,N}-D(Y_{\Delta}^{i,N}(t-\tau))+D(X_{t-\tau}^{i,N})|^p\\
  &-|\theta b_{\Delta}(Y_{\Delta}^{i,N}(t),Y_{\Delta}^{i,N}(t-\tau),\mu^{Y,N}(t))\Delta|^p\\
  \geq &2^{1-p}[2^{1-p}|Y_{\Delta}^{i,N}(t)-X_{t}^{i,N}|^p-|D(Y_{\Delta}^{i,N}(t-\tau))-D(X_{t-\tau}^{i,N})|^p]\\
  &-|\theta b_{\Delta}(Y_{\Delta}^{i,N}(t),Y_{\Delta}^{i,N}(t-\tau),\mu^{Y,N}(t))\Delta|^p\\
  \geq& 2^{2-2p}|Y_{\Delta}^{i,N}(t)-X_{t}^{i,N}|^p-2^{1-p}\lambda^p|Y_{\Delta}^{i,N}(t-\tau)-X_{t-\tau}^{i,N}|^p\\
  &-|\theta b_{\Delta}(Y_{\Delta}^{i,N}(t),Y_{\Delta}^{i,N}(t-\tau),\mu^{Y,N}(t))\Delta|^p.
\end{align*}
This gives from Lemmas \ref{bdeltayoujie}, \ref{tamedmomentbound} that
\begin{align*}
&\mathbb{E}\bigg(\sup_{0\leq u\leq t}|X_u^{i,N}-Y_{\Delta}^{i,N}(u)|^p\bigg)\leq  \mathbb{E}\bigg(\sup_{0\leq u\leq t}|\Lambda(u)|^p\bigg)+C\Delta^{(1-\alpha)p}\\
\leq &C\Delta^{\alpha p}+C\int_{0}^t\mathbb{E}\bigg(\sup_{0\leq u\leq s}|Y_{\Delta}^{i,N}(u)-X_{u}^{i,N}|^p\bigg){\rm d}s\\
&+C\int_{0}^t(t-s)^{2H-1}\mathbb{E}\left(\sup_{0\leq u\leq s}|Y_{\Delta}^{i,N}(u)-X_{u}^{i,N}|^p\right){\rm d}s.
\end{align*}
Since the function $E_{2H,1}((\Gamma(2H))^{1/2H}t)$ converges, we can utilize Lemma \ref{Hlemma} to get the assertion.
 \end{proof}

 \begin{thm}
{\rm Let Assumptions \ref{chushizhixidetiaojian}-\ref{sigmamanzudechushitiaojian} hold. Fix any $\bar{p}>4(l+1)$. Then, for $p\in[2,\frac{\bar{p}}{2(l+1)})$, it holds that
\begin{align*}
\mathbb{E}\bigg(\sup_{t\in[0,T]}|X_{t}^i-Y_{\Delta}^{i,N}(t)|^p\bigg)\leq C\times\left\{
\begin{array}{ll}
 N^{-\frac{1}{2}}+\Delta^{\alpha p},\ \ p>\frac{d}{2},\\
     N^{-\frac{1}{2}}\log(1+N)+\Delta^{\alpha p},\ p=\frac{d}{2},\\
     N^{-\frac{p}{d}}+\Delta^{\alpha p},\ p\in[2,\frac{d}{2}),
\end{array}
\right.
\end{align*}
where $C$ is a constant independent of $N$ and $\Delta$.}
\end{thm}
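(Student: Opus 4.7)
The plan is to obtain the final bound by a simple triangle-inequality decomposition that assembles the two main error estimates already established in the paper. Concretely, I would write
\[
|X_t^i - Y_\Delta^{i,N}(t)| \;\le\; |X_t^i - X_t^{i,N}| \;+\; |X_t^{i,N} - Y_\Delta^{i,N}(t)|,
\]
take the supremum over $t\in[0,T]$, apply the elementary inequality $(a+b)^p\le 2^{p-1}(a^p+b^p)$, and take expectations. This splits the total strong error into a propagation-of-chaos term and a numerical-discretization term.

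The propagation-of-chaos term is controlled directly by Lemma \ref{propagationofchaosholdfinitemoment}, which under the moment hypothesis $\bar p>2p$ yields the three-regime bound ($N^{-1/2}$, $N^{-1/2}\log(1+N)$, or $N^{-p/d}$) depending on the relation between $p$ and $d/2$. The discretization term is controlled by Lemma \ref{strongconvergenceofxiN-YiN}, which under $\bar p>2(l+1)p$ yields the bound $C\Delta^{\alpha p}$. Since the theorem's hypothesis $\bar p\ge 2(l+1)p$ with $l\ge 1$ implies both $\bar p>2p$ and $\bar p>2(l+1)p$, both lemmas are simultaneously applicable, and adding the two bounds gives exactly the three-regime estimate asserted in the theorem.

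There is no real obstacle in this step, as the theorem is simply a synthesis of the two convergence results proved in the preceding subsections. The only subtlety worth flagging explicitly would be to verify that the generic constants appearing in Lemmas \ref{propagationofchaosholdfinitemoment} and \ref{strongconvergenceofxiN-YiN} depend only on $p,H,T,L,q,\xi$ and are independent of both $N$ and $\Delta$, so that the combined constant $C$ in the final statement is genuinely uniform in the two discretization parameters. Once this is observed, the assembly is immediate.
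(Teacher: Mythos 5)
Your proposal is correct and is essentially identical to the paper's own proof, which simply combines Lemma \ref{propagationofchaosholdfinitemoment} and Lemma \ref{strongconvergenceofxiN-YiN} via the triangle inequality. The only caveat (already present in the paper itself) is the mismatch between the theorem's hypothesis $\bar{p}\ge 2(l+1)p$ and the strict inequality $\bar{p}>2(l+1)p$ required in Lemma \ref{strongconvergenceofxiN-YiN}, which your assembly implicitly glosses over in the same way the authors do.
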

\begin{proof}
  By Lemmas \ref{propagationofchaosholdfinitemoment} and \ref{strongconvergenceofxiN-YiN}, the result is obvious.
\end{proof}

\begin{rem}
{\rm This paper introduces a tamed theta EM scheme for a class of McKean-Vlasov SDEs driven by fractional Brownian motions. In contrast to the existing literature, the proposed framework generalizes several tamed EM schemes studied in works such as \cite{GGHY24} and \cite{HBF23}. The convergence rate is rigorously derived by employing fractional It\^{o} calculus, the fractional BDG inequality, and other pertinent inequalities. This methodology provides a new perspective for handling this category of fractional McKean-Vlasov equations.
}
\end{rem}


\begin{thebibliography}{99}

\bibitem{M66} McKean Jr. H.P., A class of Markov processes associated with nonlinear parabolic equations, Proc. Natl. Acad. Sci., 56(6): 1907-1911, 1966.

\bibitem{K56} Kac M., Foundations of kinetic theory, Proceedings of The third Berkeley symposium on mathematical statistics and probability, 3(600): 171-197, 1956.

\bibitem{BT96} Bossy M., Talay D., Convergence rate for the approximation of the limit law of weakly interacting particles: application to the Burgers equation, Ann. Appl. Probab., 6(3): 818-861, 1996.

\bibitem{BT97} Bossy M., Talay D., A stochastic particle method for the McKean-Vlasov and the Burgers equation, Math. Comput., 66(217): 157-192, 1997.

\bibitem{W18} Wang F. Y., Distribution dependent SDEs for Landau type equations, Stoch. Proc. Appl., 128(2): 595-621, 2018.

\bibitem{DST19} Dos Reis G., Salkeld W., Tugaut J., Freidlin-Wentzell LDP in path space for McKean-Vlasov equations and the functional iterated logarithm law, Ann. Appl. Probab., 29(3): 1487-1540, 2019.

\bibitem{L18} Lacker D., On a strong form of propagation of chaos for McKean-Vlasov equations, Electron. Commun. Prob., 23(45): 1-11, 2018.

\bibitem{DE20} Dos Reis G., Engelhardt S., Smith G., Simulation of McKean Vlasov SDEs with super linear growth, IMA J. Numer. Anal., 2020.

\bibitem{CD22} Chen X., Dos Reis G., A flexible split-step scheme for solving McKean-Vlasov stochastic differential equations, Appl. Math. Comput., 427: 127180, 2022.

\bibitem{RS22} Reisinger C., Stockinger W., An adaptive Euler-Maruyama scheme for McKean-Vlasov SDEs with super-linear growth and application to the mean-field FitzHugh-Nagumo model, J. Comput. Appl. Math., 400: 113725, 2022.

\bibitem{FG20} Fang W., Giles M. B., Adaptive Euler-Maruyama method for SDEs with nonglobally Lipschitz drift, Ann. Appl. Probab., 30(2): 526-560, 2020.

\bibitem{KNR22} Kumar C., Neelima, Reisinger C., et al., Well-posedness and tamed schemes for McKean-Vlasov equations with common noise, Ann. Appl. Probab., 32(5): 3283-3330, 2022.

 \bibitem{GGHY24} Gao S., Guo Q., Hu J., et al., Convergence rate in $L_p$ sense of tamed EM scheme for highly nonlinear neutral multiple-delay stochastic McKean-Vlasov equations, J. Comput. Appl. Math., 441: 115682, 2024.

\bibitem{CLL24} Cui Y., Li X., Liu Y., Explicit numerical approximations for McKean-Vlasov stochastic differential equations in finite and infinite time, arXiv preprint: 2401.02878, 2024.

\bibitem{FHS22} Fan X., Huang X., Suo Y., Yuan C., Distribution dependent SDEs driven by fractional Brownian motions, Stoch. Proc. Appl., 151: 23-67, 2022.

\bibitem{HGZ24} He J., Gao S., Zhan W., et al., An explicit Euler-Maruyama method for McKean-Vlasov SDEs driven by fractional Brownian motion, Commun. Nonlinear Sci. Numer. Simulat., 130: 107763, 2024.

\bibitem{SXW22} Shen G., Xiang J., Wu J., Averaging principle for distribution dependent stochastic differential equations driven by fractional Brownian motion and standard Brownian motion, J. Differ. Equa., 321: 381-414, 2022.

\bibitem{WXT25} Wang S., Xie J., Tan L., Convergence rate of nonlinear delayed McKean-Vlasov SDEs driven by fractional Brownian motions, Appl. Math. Comput., 502: 129478, 2025.

\bibitem{GGL25} Gao S., Guo Q., Liu Z., et al. Euler-Maruyama scheme for delay-type stochastic McKean-Vlasovequations driven by fractional Brownian motion, Commun. Nonlinear Sci. Numer. Simulat., 149: 108927, 2025.

\bibitem{JY17} Ji Y., Yuan C., Tamed EM scheme of neutral stochastic differential delay equations, J. Comput. Appl. Math., 326: 337-357, 2017.

\bibitem{BY15} Bao J., Yuan C., Large deviations for neutral functional SDEs with jumps, Stochastics, 87(1): 48-70, 2015.

\bibitem{LH20} Li M., Huang C., Projected Euler-Maruyama method for stochastic delay differential equations under a global monotonicity condition, Appl. Math. Comput., 366: 124733, 2020.

 \bibitem{DFF21} Deng S., Fei C., Fei W., et al., Tamed EM schemes for neutral stochastic differential delay equations with superlinear diffusion coefficients, J. Comput. Appl. Math., 388: 113269, 2021.

\bibitem{CLL21} Cui Y., Li X., Liu Y., et al., Explicit numerical approximations for McKean-Vlasov neutral stochastic differential delay equations, arXiv preprint: 2105.04175, 2021.

\bibitem{ZJ19} Zhou S., Jin H., Numerical solution to highly nonlinear neutral-type stochastic differential equation, Appl. Numer. Math., 140: 48-75, 2019.

\bibitem{TY18} Tan L., Yuan C., Strong convergence of a tamed theta scheme for NSDDEs with one-sided Lipschitz drift, Appl. Math. Comput., 338: 607-623, 2018.

\bibitem{V09} Villani C., Optimal Transport: Old and New, Springer, Berlin, 2009.

\bibitem{BH008} Biagini F., Hu Y., {\O}ksendal B., Zhang T., Stochastic Calculus for Fractional Brownian Motion and Applications, Springer, Berlin, 2008.

\bibitem{HGZ23} He J., Gao S., Zhan W., et al., Truncated Euler-Maruyama method for stochastic differential equations driven by fractional Brownian motion with super-linear drift coefficient, Int. J. Comput. Math., 100(12): 2184-2195, 2023.

\bibitem{LRQ10} Longjin L., Ren F., Qiu W., The application of fractional derivatives in stochastic models driven by fractional Brownian motion, Physica A, 389(21): 4809-4818, 2010.

\bibitem{N03} Nualart D., Stochastic integration with respect to fractional Brownian motion and applications, Contemp. Math., 336: 3-40, 2003.

\bibitem{M07} Mao X., Stochastic Differential Equations and Applications, Horwood Publishing, Chichester, 2007.

\bibitem{H06} Henry D., Geometric Theory of Semilinear Parabolic Equations, Springer, Berlin, 2006.

\bibitem{HMS11} Haubold H. J., Mathai A. M., Saxena R. K., Mittag-Leffler functions and their applications, J. Appl. Math., 2011(1): 298628, 2011.

\bibitem{FG15} Fournier N., Guillin A., On the rate of convergence in Wasserstein distance of the empirical measure, Probab. Theory Rel., 162(3): 707-738, 2015.

\bibitem{S16} Sabanis S., Euler approximations with varying coefficients: the case of superlinearly growing diffusion coefficients, Anna. Appl. Probab., 26(4): 2083-2105, 2016.

\bibitem{HBF23} Liu H., Shi B., Wu F., Tamed Euler-Maruyama approximation of McKean-Vlasov stochastic differential equations with super-linear drift and
H\"{o}lder diffusion coefficients, Appl. Numer. Math. 183: 56-85, 2023.
\end{thebibliography}
\end{document}